\newtheorem{thm}{Theorem}[section]
\newtheorem{cor}[thm]{Corollary}
\newtheorem{lem}[thm]{Lemma}
\newtheorem{prop}[thm]{Proposition}
\newtheorem*{thmA}{Theorem A}
\newtheorem*{thmB}{Theorem B}
\newtheorem*{thmD}{Theorem D}
\newtheorem*{thmC}{Theorem C}
\newtheorem*{thmE}{Theorem E}
\theoremstyle{definition}
\newtheorem{defn}[thm]{Definition}
\newtheorem*{rem*}{Remark}
\newtheorem{ex}{Example}
\numberwithin{equation}{section}
\definecolor{OrangeRed}{cmyk}{0,0.6,1,0}            
\definecolor{DarkBlue}{cmyk}{1,1,0,0.20}
\definecolor{DarkGreen}{cmyk}{1,0,0.6,0.2}
\definecolor{myblue}{rgb}{0.66,0.78,1.00}
\definecolor{Violet}{cmyk}{0.79,0.88,0,0}
\definecolor{Lavender}{cmyk}{0,0.48,0,0}
\renewcommand{\Im}{\operatorname{Im}}
\renewcommand{\Re}{\operatorname{Re}}
\newcommand{\dist}{\operatorname{dist}}
\newcommand{\length}{\operatorname{length}}
\newcommand{\inter}{\operatorname{int}}
\newcommand{\ext}{\operatorname{ext}}
\newcommand{\FF}{{\cal F}}
\newcommand{\A}{{\mathbb A}}
\newcommand{\C}{{\mathbb C}}
\newcommand{\D}{{\mathbb D}}
\newcommand{\N}{{\mathbb N}}
\newcommand{\Z}{{\mathbb Z}}
\newcommand{\ra}{\rightarrow}
\newcommand{\lran}{\underset{n\to\infty}{\longrightarrow}}
\newcommand{\ov}{\overline}
\renewcommand{\epsilon}{\varepsilon}
\renewcommand{\phi}{\varphi}
\newcommand{\dD}{\dist_{\D}}
\renewcommand{\tilde}{\widetilde}
\newcommand{\Gt}{\tilde{\Gamma}}
\newcommand{\leucl}{{\ell_{\operatorname{Eucl}}}}
\newcommand{\deucl}{{\dist_{\operatorname{Eucl}}}}
\title{Classifying simply connected wandering domains}
\author[1]{Anna Miriam Benini\thanks{ This project has received funding from the European Union’s Horizon 2020 research and innovation programme under the Marie Sk{\l}odowska-Curie Grant Agreement No. 703269 COTRADY.}}
\author[2]{Vasiliki Evdoridou \thanks{ Supported by the EPSRC grant EP/R010560/1.}}
\author[3]{N\'uria Fagella\thanks{Partially supported by the Spanish grant MTM2017-86795-C3-3-P, the Maria de Maeztu Excellence Grant MDM-2014-0445 of the BGSMath and the Catalan grant 2017SGR1374.}}
\author[2]{Philip J.  Rippon\textsuperscript{ \textdagger }}
\author[2]{Gwyneth M.  Stallard\textsuperscript{ \textdagger }}
\affil[1]{\small Dep. of Matematical, Physical and Computer Sciences, Universit\`a di Parma, Italy.}
\affil[2]{\small School of Mathematics and Statistics, The Open University, Milton Keynes, UK.}
\affil[3]{\small Dep. de Matem\`atiques i Inform\`atica, Universitat de Barcelona, Catalonia.}
\date{}
\begin{document}

\maketitle

\begin{center}
\emph{Dedicated to Misha Lyubich on his 60th birthday}
\end{center}

\begin{abstract}  While the dynamics of transcendental entire functions in periodic Fatou components and in multiply connected wandering domains are well understood, the dynamics  in simply connected wandering domains have so far eluded classification. We give a detailed classification of the dynamics in such wandering domains in terms of the hyperbolic distances between iterates and also in terms of the behaviour of orbits in relation to the boundaries of the wandering domains.  In establishing these classifications, we obtain new results of wider interest concerning non-autonomous forward dynamical systems of holomorphic self maps of the unit disk.    We also develop  a new general technique for constructing examples of bounded, simply connected wandering domains with prescribed internal dynamics, and a criterion to ensure that the resulting boundaries are Jordan curves. Using this technique, based on approximation theory, we show that  all of the  nine possible types of simply connected wandering domain resulting from our classifications are indeed realizable.
\end{abstract}

\section{Introduction}
We consider dynamical systems defined by the iteration of  holomorphic maps
\[
f:\C\to \C
\]
on the complex plane, and particularly transcendental ones,   that is, those with an essential singularity at infinity.  The complex plane, seen as the phase space of the system, splits into two completely invariant subsets: the {\em Fatou set}, or those points in a neighbourhood of which the iterates $\{f^n\}$ form a normal family, and its complement, the {\em Julia set}. The Fatou set is open and consists typically of infinitely many connected components called {\em Fatou components}. Fatou components map from one to another and this leads to dynamics on the set of these components.

In this setting, periodic Fatou components were completely classified a century ago by Fatou, in terms of the possible limit functions of the family of iterates;  see, for example,~\cite{bergweiler93}. Indeed, if~$U$ is a periodic Fatou component of period $p\geq 1$, then $U$ can only be one of the following: a domain on which the iterates $\{f^{pn}|_U\}_n$ converge to an attracting or parabolic fixed point of $f^p$ (known as an attracting or parabolic component, respectively); or a domain on which the iterates $\{f^{pn}|_U\}_n$ converge to infinity locally uniformly (known as a {\em Baker domain}); or a topological disk on which $f^p$ is conjugate to a rigid irrational rotation (known as a {\em Siegel disk}).

If a Fatou component $U$ is neither periodic, nor preperiodic (that is, eventually periodic), then $f^i(U)\cap f^j(U)=\emptyset$ for all $i,j\geq 0$, $i\neq j$ and $U$ is called a {\em wandering domain}. On a wandering domain all limit functions must be constant  \cite{Fa20}.
Those for which the only limit function is the point at infinity are called {\em escaping}, while the rest are either {\em oscillating} (if infinity is a limit function and some other finite value also) or {\em dynamically bounded} (if all limit functions are points in the plane). A major open problem in transcendental dynamics is whether dynamically bounded wandering domains exist at all.

An essential role in the theory of holomorphic dynamics is played by the {\em singular values},   that is, those points for which not all inverse branches are locally well defined. In transcendental dynamics, these can be {\em critical values} (images of zeros of $f'$), {\em asymptotic values} or accumulations thereof.

For a wide class of functions known as {\em finite type maps} (those maps with a finite number of singular values), every Fatou component is periodic or preperiodic. Indeed, the absence of wandering domains for polynomials (actually for rational maps) \cite{sullivan} and for transcendental entire functions of finite type \cite{GK}, \cite{EandL}  was a major breakthrough in the theory of complex dynamics,  and meant that the possible types of dynamical behaviours of all such maps within the Fatou set was fully classified.  The result about the absence of wandering domains for the class of transcendental maps of finite type was particularly striking because in the 1970's  Baker \cite{Baker76} had constructed a transcendental entire function which had a  nested sequence of multiply connected Fatou components, each mapping to the next and whose orbits escaped to infinity, showing that wandering domains can indeed exist. While the wandering domains in Baker's example were multiply connected, since then a wide variety of examples
of simply connected wandering domains have been given; see, for example, \cite[p. 106]{Herman}, \cite[p. 414]{sullivan}, \cite[p. 564, p. 567]{Baker-wd}, \cite[p. 222]{Devaney-entire}, \cite[Examples 1 and 2]{EandL} and \cite[Sect. 4.3.]{FagHen2}. But it is only more recently that wandering domains have emerged as a major focus of attention, as the least understood of all the different types of Fatou components.

Indeed, several important advances have been made in recent years. For example, (oscillating) wandering domains have been constructed for functions in the Eremenko-Lyubich class $\mathcal{B}$ (those maps with a bounded sets of singular values) \cite{bishopwd,marshi, FJL}, a landmark result because escaping wandering domains have been shown not to exist for maps in this class \cite{EandL}. On another front, progress has been made \cite{bfjk, lasse-helena} in relating wandering domains  to the {\em postsingular set} (that is, the forward orbits of the singular values), a central classical problem in holomorphic dynamics that is well understood for the case of periodic components. We also mention the recent construction by Bishop \cite{Bish1} of  an entire function with Julia set of Hausdorff dimension~1, solving a long standing problem in transcendental dynamics; this function has multiply connected wandering domains, all of whose boundary components are Jordan curves.

Moreover, a detailed description of the dynamics of entire functions {\it within} multiply connected wandering domains was obtained in \cite{BRS}. Perhaps surprisingly it turns out that in these wandering domains all orbits behave in essentially the same manner, eventually landing in and remaining in a sequence of very large nested round annuli, and this detailed description has proved crucial in establishing results about classes of commuting transcendental entire functions \cite{BRS16}.

 Noticeably, however, very little is known about the full range of possible behaviours of the orbits inside {\em simply} connected  wandering domains, relative to the components themselves. One of the challenges is that several different types of behaviour are known to exist. Let us elaborate a bit further on this observation, while explaining at the same time the motivation for this paper.  Consider any holomorphic self-map of $\C\setminus\{0\}$,  or an entire  map $F:\C\to\C$ for which $z=0$ is either an omitted value or has itself as its only preimage;   for example, $F_\lambda(z)=\lambda z^d \exp(z)$ with $d\in\N$, $\lambda \in \C \setminus \{0\}$. Such a map $F$ can be lifted by the exponential map to a transcendental entire function $f:\C\to \C$ satisfying $\exp(f(z)) = F(\exp(z))$.  Observe that $f$ is not uniquely defined, since  any map of the form $f_k(z)=f(z)+2k\pi i$ for $k\in\Z$ will   have the same property. Now notice that if $F$ had, say, an   attracting component $U$ (not containing $z=0$), then any logarithm of~$U$, say $\widetilde{U}$, would be a wandering domain for $f_k$ (for an appropriate choice of $k$). Nevertheless, the orbits of points in $\widetilde{U}$ would still ``remember'' that they were lifted from an attracting component, in the sense that the iterates of any given point would be successively closer to the orbit of $\widetilde{p}:=\log p \in\widetilde{U}$,  where~$p$ is the fixed point of~$F$ in~$U$. Likewise, if $U$ had been, for example, a Siegel disk, the iterates of points in the successive images of $\widetilde{U}$ would ``rotate'' around a centre point (actually orbit), again the iterates of $\widetilde{p}$.   See Figure~\ref{siegel},   and also Figure~\ref{figparab} in Section~\ref{parabolic} for a lift of a parabolic component.  

\begin{figure}[hbt!]
\fboxsep=0.5pt
\begin{center}
\framebox{\includegraphics[width=0.45\textwidth]{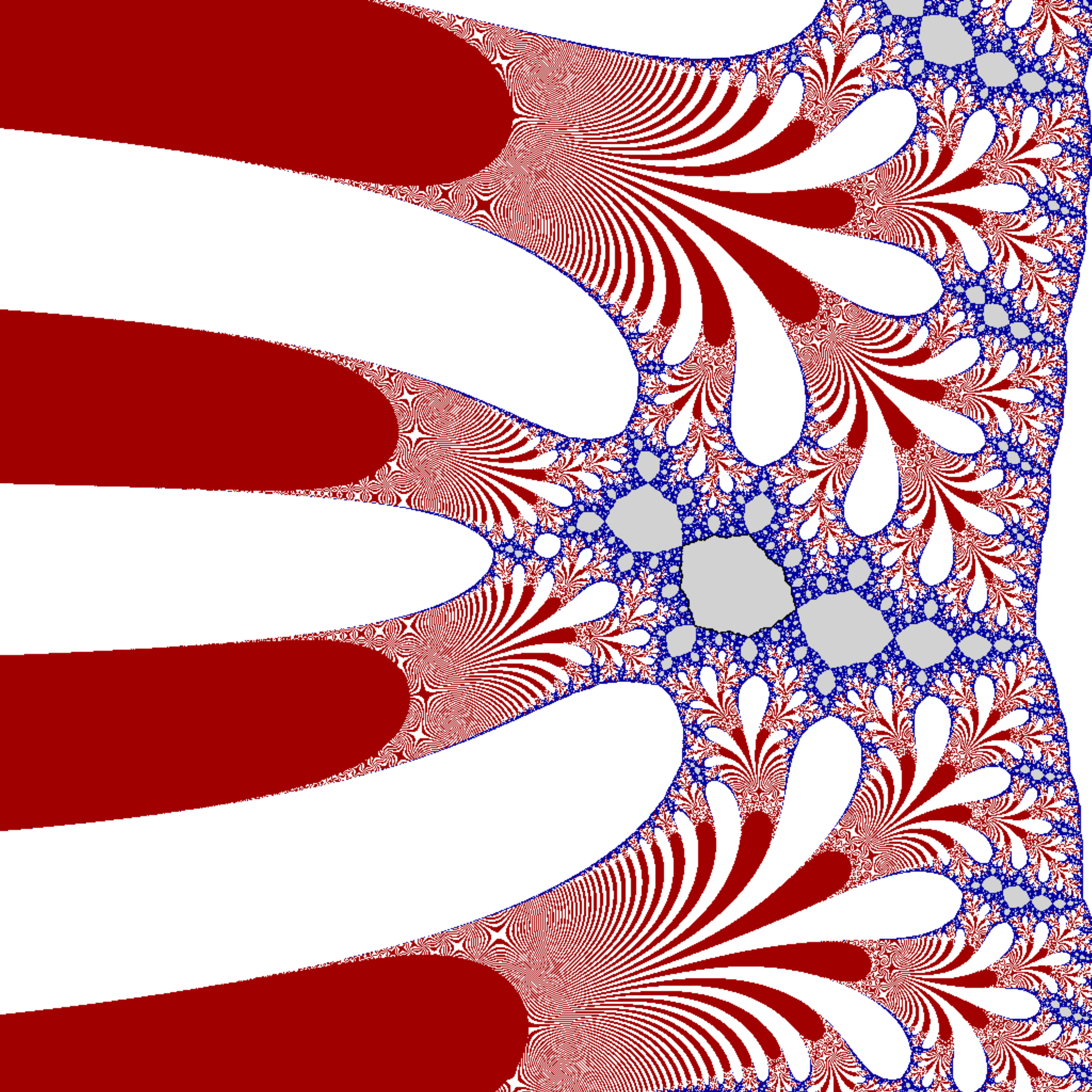}}
\framebox{\includegraphics[width=0.45\textwidth]{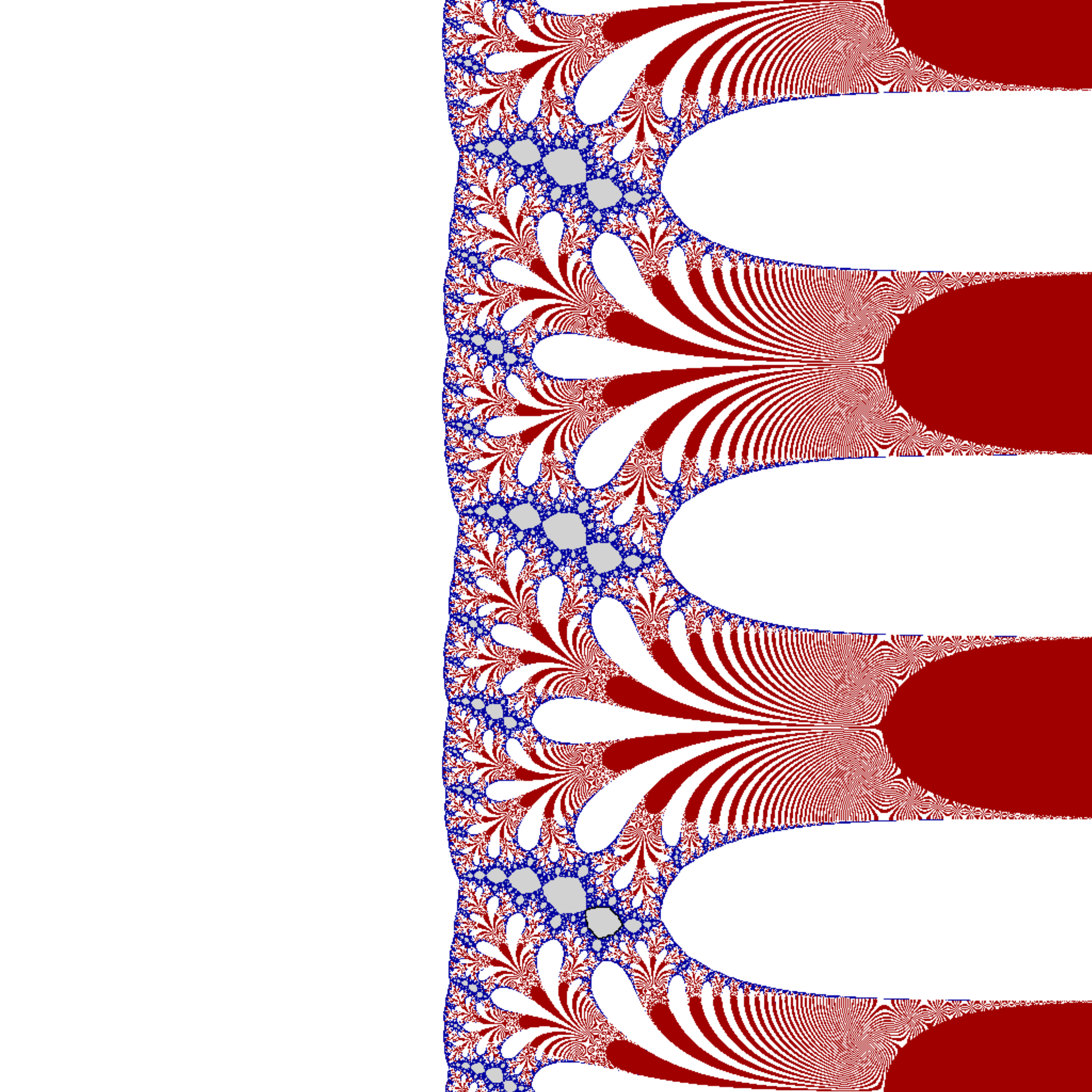}}
\end{center}
\caption{\label{siegel} \small Left:  Dynamical plane of $F(w)= \lambda w^2e^{-w}$ with  $\lambda=e^{2-\rho}/(2-\rho) $ and $\rho=e^{\pi i (1-\sqrt{5})}$. There is a (bounded) super-attracting component centred at $w=0$ (white) and a Siegel disk centred at $w_0=2-\lambda$ (gray). Right: Dynamical plane of $f(z)=2 z - e^z + \log \lambda$ satisfying $\exp(f(z))=F(\exp(z))$. The super-attracting component lifts to a Baker domain (white), while the Siegel disk lifts to infinitely many orbits of wandering domains on which $f$ is univalent (gray). See \cite{bergweilerexample,FagHen2,FaGa03} for details. The range is $[-9,9]\times[-9,9]$.}
\end{figure}

With this lifting procedure, one can construct examples of simply connected escaping wandering domains exhibiting the  three different types of internal dynamics that correspond to the possible dynamics inside a periodic component: attracting, parabolic or rotation-like. Thus we already have a contrast with multiply connected wandering domains, where only one type of dynamical behaviour is possible, as noted above. These observations suggest a very natural question: How special are the three examples above in the general world of wandering domains; in other words, is there a classification of wandering domains in the spirit of Fatou's classification of periodic Fatou components or is {\it any} orbit behaviour realizable? Let us note that, due to the lack of periodicity, the dynamics of~$f$ on a sequence of wandering domains can be thought of as a non-autonomous system (at every iterate we apply a ``different'' map), and such systems are {\em a priori} difficult to study because they may exhibit a wide range of behaviours. This might be an indication that such a  classification may not exist.  On the other hand, the successful description of the dynamics in multiply connected wandering domains obtained in \cite{BRS} is encouraging, and in this paper we obtain a classification of the dynamics in simply connected wandering domains.


The dynamics of points which belong to wandering domains can be seen from two perspectives. While points have to move together with the wandering domain which contains them (in the way that passengers on a cruise ship must follow the ship's trajectory), on the one hand they may or may not cluster together as they move along (as happens when lifting an attracting component but not when lifting a Siegel disk), and on the other hand orbits may stay away from the boundaries of their domains (as happens when lifting an attracting basin but not when lifting a parabolic basin). Our results will address both of those points of view.

The most natural  intrinsic quantity that we have to hand -- intrinsic in that it does not depend on the embedding of the wandering domains in the plane -- are the hyperbolic distances between pairs of corresponding points of two orbits, and so our approach will be to evaluate how hyperbolic distances between such pairs of points evolve under iteration.

Let us recall that a domain $U\subset\C$ is \emph{hyperbolic} if its boundary (in $\C$) contains at least two points. For a hyperbolic domain $U$, let $\rho_U(z)$  denote the hyperbolic density at $z\in U$ and for $z,z'\in U$ let $\dist_U(z,z')$ denote the hyperbolic distance in $U$ between $z$ and $z'$. Also recall that if $U,V $ are hyperbolic domains, and $f:U\ra V$ is a holomorphic map, then the Schwarz-Pick  Lemma ensures that $f$ is a contraction for the hyperbolic distance. Hence, if $U\subset \C$ is a  wandering domain  of a transcendental entire function $f$ and we define $U_n$ to be the Fatou component containing $f^n(U)$, for $n\in \N$, we have that, given any two points $z,z' \in U$, the sequence
\[
\dist_{U_n}(f^n(z), f^n(z'))
\]
is decreasing and therefore converges to a value that we denote by
\[
c(z,z')=c_{U}(z,z'):=\lim_{n\to\infty}\dist_{U_n}(f^n(z), f^n(z')) \geq 0.
\]
Our first classification result shows that whether or not $c(z,z')$  is zero does not  actually depend on the chosen pair $(z,z')$, provided that the two points have distinct orbits.  We also give a criterion to discriminate between these cases based on the concept of \emph{hyperbolic distortion} \cite[Sect. 5,11]{BeardonMinda}.

\begin{defn}[Hyperbolic distortion]\label{HDdef}
If $f:U\ra V$ is a holomorphic map between two hyperbolic domains $U$ and $V$, then the \emph{hyperbolic distortion} of $f$ at $z$ is
\[
\|Df(z)\|_U^V:=\lim_{z'\ra z}\frac{\dist_V(f(z'),f(z))}{\dist_U(z',z)},
\]
and it equals the modulus of the hyperbolic derivative.
\end{defn}

\begin{thmA}[First classification theorem] \label{thm:Theorem A introduction}
Let $U$ be a simply connected wandering domain   of a transcendental entire function $f$ and let $U_n$ be the Fatou component containing $f^n(U)$, for $n \in \N$. Define the countable set of pairs
\[
E=\{(z,z')\in U\times U : f^k(z)=f^k(z') \text{\ for some $k\in\N$}\}.
\]
Then, exactly one of the following holds.
\begin{itemize}
\item[\rm(1)] $\dist_{U_n}(f^n(z), f^n(z'))\lran c(z,z')= 0 $ for all $z,z'\in U$, and we say that $U$ is {\em (hyperbolically) contracting};
\item [\rm(2)] $\dist_{U_n}(f^n(z), f^n(z'))\lran c(z,z') >0$ and $\dist_{U_n}(f^n(z), f^n(z')) \neq c(z,z')$
for all $(z,z')\in (U \times U) \setminus E$, $n \in \N$, and we say that $U$ is {\em  (hyperbolically) semi-contracting}; or
\item[\rm(3)] there exists $N>0$ such that for all $n\geq N$, $\dist_{U_n}(f^n(z), f^n(z')) = c(z,z') >0$ for all $(z,z') \in (U \times U) \setminus E$, and we say that $U$ is {\em (hyperbolically) eventually isometric}.
\end{itemize}

Moreover for $z \in U$ let $\lambda_n(z)$ be the hyperbolic distortion $\|Df(f^n(z))\|_{U_n}^{U_{n+1}}$. Then
\begin{itemize}
\item $U$ is contracting  if and only if $\sum_{n=0}^{\infty} (1-\lambda_n(z))=\infty$;
\item $U$ is eventually isometric if and only if $\lambda_n(z) = 1$, for $n$ sufficently large.
\end{itemize} 
\end{thmA}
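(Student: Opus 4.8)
The plan is to reduce everything to a statement about a non-autonomous forward sequence of holomorphic self-maps of the unit disk. Fix a base point $z_0 \in U$ and a Riemann map $\phi_0 : \D \to U$ with $\phi_0(0) = z_0$; more generally, for each $n$ choose a Riemann map $\phi_n : \D \to U_n$ with $\phi_n(0) = f^n(z_0)$. Then $g_n := \phi_{n+1}^{-1} \circ f \circ \phi_n : \D \to \D$ is holomorphic with $g_n(0) = 0$, and the composition $G_n := g_{n-1} \circ \cdots \circ g_0$ satisfies $G_n = \phi_n^{-1} \circ f^n \circ \phi_0$. Since the $\phi_n$ are hyperbolic isometries, for any $w, w' \in \D$ we have $\dist_{U_n}(f^n(\phi_0(w)), f^n(\phi_0(w'))) = \dist_\D(G_n(w), G_n(w'))$, and the hyperbolic distortion $\lambda_n(z_0)$ equals $|g_n'(0)|$ (the Schwarz--Pick derivative of $g_n$ at $0$, which here is just the Euclidean derivative since $g_n$ fixes $0$). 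By Schwarz--Pick each $g_n$ is a contraction, so $|g_n'(0)| \le 1$, with equality iff $g_n$ is a rotation. The trichotomy (1)--(3) and the two ``if and only if'' criteria are therefore special cases of a general result about forward compositions of self-maps of $\D$ fixing the origin — which I would expect to be one of the ``new results of wider interest concerning non-autonomous forward dynamical systems'' advertised in the abstract, and hence proved earlier in the paper; I would simply invoke it here.

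Concretely, the key general fact to invoke is: for a sequence $(g_n)$ of holomorphic self-maps of $\D$ with $g_n(0)=0$, writing $\lambda_n = |g_n'(0)| \in (0,1]$, the following hold. (i) If $\sum_n (1-\lambda_n) = \infty$, then $G_n \to 0$ locally uniformly, and in fact $\dist_\D(G_n(w),G_n(w')) \to 0$ for all $w,w'$; this is the contracting case, via the estimate $\dist_\D(G_n(w),G_n(w')) \le \big(\prod_{k<n}\lambda_k\big)\cdot(\text{something bounded})$ coming from Schwarz's lemma / the infinite-product criterion, together with the classical fact that $\prod \lambda_k = 0$ iff $\sum(1-\lambda_k)=\infty$. (ii) If $\lambda_n = 1$ for all $n \ge N$, then each such $g_n$ is a rotation, so from stage $N$ on the maps $G_n$ differ from $G_N$ by an isometry and $\dist_\D(G_n(w),G_n(w'))$ is eventually constant: the eventually isometric case. (iii) The remaining case is $\prod \lambda_k > 0$ but $\lambda_n < 1$ for infinitely many $n$. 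Here one shows the limit $c(w,w') := \lim_n \dist_\D(G_n(w),G_n(w'))$ is strictly positive for $w \ne w'$ (again from $\prod\lambda_k>0$, comparing with a lower bound of the form $\dist_\D(G_n(w),G_n(w')) \ge \delta(w,w')\prod_{k<n}\lambda_k$ near the diagonal, then bootstrapping off the diagonal), yet the sequence is strictly decreasing for every $n$ — this last point requires that no $g_n$ with $\lambda_n<1$ can be an isometry on the relevant pair, which follows from the strict Schwarz--Pick inequality: if $g_n$ is not an automorphism, then $\dist_\D(g_n(a),g_n(b)) < \dist_\D(a,b)$ strictly whenever $a \ne b$. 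The only subtlety is handling the exceptional set $E$: pairs $(z,z')$ with $f^k(z)=f^k(z')$ collapse under $G_k$ and thereafter $\dist = c = 0$, which is why $E$ must be excluded in (2) and (3); one checks $E$ is countable because $f^k(z)=f^k(z')$ forces $z,z'$ to lie in a discrete fibre.

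The step I expect to be the genuine obstacle is proving that $c(z,z')$ being zero (or not) is independent of the pair $(z,z')$, and correspondingly that the dichotomy ``$\sum(1-\lambda_n) = \infty$ or $<\infty$'' does not depend on the choice of base point $z_0$ — since a priori $\lambda_n(z)$ depends on $z$. For the independence of the base point: if $z, z' \in U$ are two base points, the hyperbolic distortions $\lambda_n(z)$ and $\lambda_n(z')$ are comparable in a summable-defect sense, because $f^n(z)$ and $f^n(z')$ stay within bounded hyperbolic distance $D := \dist_{U}(z,z') \ge \dist_{U_n}(f^n z, f^n z')$ of each other, and one can use a Koebe-type distortion estimate for hyperbolic derivatives (or the explicit form of the Schwarz--Pick metric) to show $|\log \lambda_n(z) - \log \lambda_n(z')| \le C(D)\,(1 - \lambda_n(z))$ or a similar bound that makes $\sum(1-\lambda_n(z))$ and $\sum(1-\lambda_n(z'))$ simultaneously finite or infinite. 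For the independence of $c(z,z')$ being zero: once the base-point independence of the series dichotomy is in hand, case (1) holds for \emph{some} pair iff the series diverges iff it holds for \emph{all} pairs; and in the convergent-series case, a normal-families / subsequential-limit argument on the $G_n$ (the $g_n$ precompose to a subconvergent sequence since they fix $0$ and the tail products are bounded away from $0$) shows $c(z,z') > 0$ for every pair outside $E$. I would organize the final proof as: (a) set up the disk model and identify $\lambda_n$ with $|g_n'(0)|$; (b) cite the general non-autonomous trichotomy for maps fixing $0$; (c) prove base-point independence of $\sum(1-\lambda_n)$ via the hyperbolic distortion comparison; (d) deduce the pair-independence of the trichotomy and the two stated criteria; (e) dispose of $E$.
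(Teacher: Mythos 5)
Your proposal follows the same overall strategy as the paper: transfer to a non-autonomous sequence $g_n$ of self-maps of $\D$ fixing $0$ via Riemann maps normalized along the orbit of a base point, identify $\lambda_n(z_0)$ with $|g_n'(0)|$ (the paper's Lemma~\ref{lemUD}), and drive the trichotomy by the general dichotomy governed by $\sum(1-\lambda_n)$ (Theorem~\ref{Thm zero}), with Schwarz's lemma handling the isometric case. Where you genuinely diverge is at the crux you correctly identify: independence of the pair and of the base point. The paper never needs base-point independence of $\sum(1-\lambda_n(z))$ as an ingredient; working in the single frame normalized at an arbitrary $z_0$, it shows that contraction of one pair outside $E$ forces $\sum(1-\lambda_n(z_0))=\infty$ and hence contraction of all pairs (Theorem~\ref{Thm zero} plus Lemma~\ref{lemUD}; Corollary~\ref{cor zero} is this propagation via M\"obius re-centering), that eventually constant distance for one pair forces $f:U_n\to U_{n+1}$ to be an isometry for $n\ge N$ by the equality case of Schwarz--Pick, hence case (3) for all pairs with the same $N$, and case (2) is what remains; base-point independence of the series is then a corollary, since the series criterion is equivalent to the intrinsic property of being contracting. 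Your route --- the Beardon--Minda comparison $\dist_\D(\lambda_n(z),\lambda_n(z'))\le 2\dist_{U_n}(f^nz,f^nz')$, giving comparable defects $1-\lambda_n(z)\asymp1-\lambda_n(z')$, hence base-point independence, then re-basing at a member of each pair --- is also valid and a genuinely different organization (the paper uses exactly that comparison lemma, Lemma~\ref{lBM}, but only for Theorem B(c)). Two cautions: your quantitative placeholders, the upper bound $\dist_\D(G_n(w),G_n(w'))\le C\prod_{k<n}\lambda_k$ and especially the off-diagonal lower bound $\ge\delta(w,w')\prod_{k<n}\lambda_k$, are not established and are stronger than needed (the paper proves only the weaker, sufficient bounds of Theorem~\ref{contractionrate}, with constants degrading near $\partial\D$), and the normal-families aside requires a Hurwitz-type dichotomy, since a non-constant subsequential limit could still identify the two points; drop these in favour of the qualitative statements you invoke. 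Minor slips: $\lambda_n$ may be $0$ at critical orbit points, and in your case (iii) the distance strictly decreases only at the infinitely many non-rotation steps, which still yields $\dist_{U_n}\neq c$ for every $n$ because the sequence is non-increasing with limit $c$.
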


Note that, by the Schwarz-Pick Lemma,~$U$ is eventually isometric if and only if $f:U_n\to U_{n+1}$ is univalent for large~$n$ and so a wandering domain obtained by lifting a Siegel disk is always eventually isometric. In contrast,  we show that lifting an attracting or parabolic component results in a contracting wandering domain. To distinguish between these two cases, we refine the classification of contracting wandering domains according to the rate of contraction.

 \begin{defn}[Rate of contraction]
\label{def:strongly contracting}
Let $U$ be a simply connected wandering domain of a transcendental entire function $f$ and let $U_n$ be the Fatou component containing $f^n(U)$, for $n \in \N$. We say that  $U$ is {\emph{strongly contracting}} if there exists $c\in (0,1)$ such that
$$\operatorname{dist}_{U_n}(f^n(z),f^n(z'))=O(c^n), \quad \text{for } z,z' \in U.$$
We say that $U$ is \emph{super-contracting} if it satisfies the stronger condition that
\[
\lim_{n \to \infty} (\operatorname{dist}_{U_n}(f^n(z),f^n(z')))^{1/n} = 0, \quad \text{for } z,z' \in U.
\]
\end{defn}

It is easy to see that the lift of an attracting component is strongly contracting, and we prove in Section \ref{sect:Contraction Trichotomy} that the lift of a parabolic component is contracting but not strongly contracting. We do this by a careful analysis of the behaviour of the hyperbolic distance between pairs of points in two orbits in any parabolic component; see Theorem~\ref{parabolic weakly contracting}.

A special case of super-contracting wandering domains is given by wandering domains which contain an orbit consisting of critical points. An example of such a super-contracting domain which does not arise from a lifting procedure is given in Theorem F.

Next, we give sufficient criteria for a wandering domain to be strongly contracting or super-contracting in terms of the long term average values of the hyperbolic distortion along the orbit of a point $z_0 \in U$. We also show that this quantity is independent of the point $z_0$.

\begin{thmB}\label{thm:strongly and super contracting}
Let $U$ be a simply connected wandering domain of a transcendental entire function $f$. For $n\in\N$, let $U_n$ be the Fatou component containing $f^n(U)$.  Fix  a point  $z_0 \in U$, and for $z \in U$,  $n \in \N$  let $\lambda_n(z) = \|Df(f^n(z))\|_{U_n}^{U_{n+1}}$. Then the following facts hold:
\begin{itemize}
\item[\rm(a)] If
$\limsup_{n \to \infty} \frac{1}{n}\sum_{k=1}^{n} \lambda_k(z_0)<1$, then $U$ is strongly contracting.
\item[\rm(b)] If $\lim_{n \to \infty} \frac{1}{n}\sum_{k=1}^{n} \lambda_k(z_0)=0$, then $U$ is super-contracting.
\item[\rm(c)] If $z \in U$, then $\limsup_{n \to \infty} \frac{1}{n}\sum_{k=1}^{n} \lambda_k(z) = \limsup_{n \to \infty} \frac{1}{n}\sum_{k=1}^{n} \lambda_k(z_0)$.
    \end{itemize}
\end{thmB}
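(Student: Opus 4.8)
The plan is to rest everything on a single quantitative distortion estimate for holomorphic maps between hyperbolic domains, to convert the Ces\`aro hypotheses in (a) and (b) into decay estimates for the hyperbolic distances by combining the concavity of an auxiliary M\"obius function with Jensen's inequality, and to invoke Theorem~A to transfer the hypotheses between base points. The input estimate is: if $g\colon V\to W$ is holomorphic between hyperbolic domains and $w,w'\in V$, then
\[
\|Dg(w)\|_V^W\ \le\ m_{\tanh\dist_V(w,w')}\bigl(\|Dg(w')\|_V^W\bigr),\qquad\text{where}\quad m_T(\mu):=\frac{\mu+T}{1+\mu T}\ \ (T\in[0,1)).
\]
Lifting via universal covering maps reduces this to the unit disk, where it follows from the hyperbolic derivative $\D\to\overline{\D}$ being $2$-Lipschitz for the hyperbolic metric (see \cite{BeardonMinda}); the factor $2$ is exactly absorbed by the half-angle in the pseudo-hyperbolic distance, which is why $\tanh\dist_V(w,w')$ appears rather than $\tanh\bigl(2\dist_V(w,w')\bigr)$. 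I will use three elementary properties of $m_T$: it is increasing in both of its arguments; $m_T(1)=1$ while $m_T(0)=T$; and $\mu\mapsto\log m_T(\mu)$ is concave and non-positive on $[0,1]$ (its second derivative is negative precisely because $T^2<1$).

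Next I set up the basic recursion. Fix $z,z'\in U$ and put $d_n:=\dist_{U_n}(f^n(z),f^n(z'))$ and $r_n:=\dist_{U_n}(f^n(z),f^n(z_0))$; both are decreasing by the Schwarz--Pick lemma. Pushing the hyperbolic geodesic $\gamma_n\subset U_n$ from $f^n(z)$ to $f^n(z')$ forward by $f$ and integrating the hyperbolic distortion along it gives $d_{n+1}\le\bigl(\sup_{\gamma_n}\|Df\|_{U_n}^{U_{n+1}}\bigr)d_n$; since every point of $\gamma_n$ lies within hyperbolic distance $d_n\le d_0$ of $f^n(z)$, the distortion estimate (with base point $f^n(z)$) and the monotonicity of $m_T$ in $T$ yield
\[
d_n\ \le\ d_0\prod_{k=0}^{n-1}m_{\tanh d_0}\bigl(\lambda_k(z)\bigr),\qquad\text{and more generally}\qquad d_{N+j}\ \le\ d_N\prod_{k=N}^{N+j-1}m_{\tanh d_N}\bigl(\lambda_k(z)\bigr).
\]
For part (c), applying the same estimate to $g=f\colon U_n\to U_{n+1}$ at the points $f^n(z),f^n(z_0)$ and using $m_T(\mu)-\mu\le T$ gives $|\lambda_n(z)-\lambda_n(z_0)|\le\tanh r_n$, hence $\bigl|\tfrac1n\sum_{k=1}^n\lambda_k(z)-\tfrac1n\sum_{k=1}^n\lambda_k(z_0)\bigr|\le\tfrac1n\sum_{k=1}^n\tanh r_k\to\tanh c(z,z_0)$, so the two limsups differ by at most $\tanh c(z,z_0)$. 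If $c(z,z_0)=0$ they coincide. If $c(z,z_0)>0$ then by Theorem~A the domain $U$ is not contracting, so $\sum_n(1-\lambda_n(w))<\infty$ and $\lambda_n(w)\to1$ for every $w\in U$; thus both Ces\`aro averages tend to $1$ and again the limsups agree. This proves (c).

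For (a) and (b), part (c) lets us assume the stated hypothesis with $z_0$ replaced by $z$. Taking logarithms in the recursion, dividing by $n$, and applying Jensen's inequality to the concave function $\log m_{\tanh d_0}$,
\[
\tfrac1n\log\frac{d_n}{d_0}\ \le\ \tfrac1n\sum_{k=0}^{n-1}\log m_{\tanh d_0}\bigl(\lambda_k(z)\bigr)\ \le\ \log m_{\tanh d_0}\Bigl(\tfrac1n\sum_{k=0}^{n-1}\lambda_k(z)\Bigr).
\]
In case (a), writing $L:=\limsup_n\tfrac1n\sum_{k=1}^n\lambda_k(z)<1$ and $L_1:=\tfrac{1+L}{2}$, the inner average is eventually $\le L_1<1$, so $d_n\le d_0\,m_{\tanh d_0}(L_1)^n$ for large $n$; in particular $d_n\to0$ for every pair. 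Choosing $N_0$ with $d_{N_0}\le1$ and rerunning the argument from time $N_0$ (so that $\tanh d_{N_0}\le\tanh1$ and the monotonicity of $m$ in $T$ applies) gives $d_n=O(c^n)$ with $c:=m_{\tanh1}(L_1)<1$ depending only on $L$, which is the uniform rate required for strong contraction. In case (b) the hypothesis forces $L=0$, so by (a) the domain is already strongly contracting and $d_n\to0$. Given $\varepsilon>0$, pick $N_0$ with $T_0:=\tanh d_{N_0}\le\varepsilon$; then $d_{N_0+j}\le d_{N_0}\prod_{k=N_0}^{N_0+j-1}m_{T_0}(\lambda_k(z))$, and taking $(N_0+j)$-th roots, using $\log m_{T_0}\le0$, Jensen, and $\tfrac1j\sum_{k=N_0}^{N_0+j-1}\lambda_k(z)\to0$, one obtains $\limsup_j d_{N_0+j}^{1/(N_0+j)}\le m_{T_0}(0)=T_0\le\varepsilon$. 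As $\varepsilon>0$ was arbitrary, $d_n^{1/n}\to0$, i.e.\ $U$ is super-contracting.

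\textbf{Main obstacle.} The crux is recognising that the distortion estimate must be combined with the concavity of $\log m_T$: the hypotheses control the arithmetic means of the $\lambda_k$ and not of $\log\lambda_k$, so one cannot simply multiply pointwise distortion bounds, and it is Jensen's inequality for $\log m_T$ that converts a bound on $\tfrac1n\sum\lambda_k$ into exponential (respectively super-exponential) decay of $d_n$. A secondary subtlety is that a single pass through the recursion only delivers $\limsup d_n^{1/n}\le\tanh d_0<1$, which is strictly weaker than super-contraction; obtaining $d_n^{1/n}\to0$ requires the bootstrap in which the base distance $d_{N_0}$, and hence $T_0=\tanh d_{N_0}$, is driven to $0$ along the orbit — something that becomes available only after strong contraction has already been established.
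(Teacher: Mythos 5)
Your proof is correct, but it takes a genuinely different route from the paper's. The paper conjugates the dynamics to a non-autonomous system $g_n=\varphi_n\circ f\circ\varphi_{n-1}^{-1}$ of self-maps of $\D$ fixing $0$ (Riemann maps normalized along the orbit of $z_0$), so that $|g_n'(0)|=\lambda_n(z_0)$, and proves (a) and (b) at the disk level: the Schwarz-type estimate $|g(w)|\le |w|\,\tfrac{\lambda+|w|}{1+\lambda|w|}$ (the same M\"obius function as your $m_T$, applied to moduli rather than to distortions) combined with the arithmetic--geometric mean inequality gives $|G_n(w)|\le\bigl(\tfrac1n\sum_k\lambda_k+\tfrac1n\sum_k|w_k|\bigr)^n$, and the extra average is killed by the paper's criterion for $G_n\to 0$; part (c) is then deduced \emph{from} part (a), since exponential decay of $\dist_{U_k}(f^k(z),f^k(z_0))$ together with the Beardon--Minda distortion lemma forces $|\lambda_k(z)-\lambda_k(z_0)|=O(c^k)$, the only remaining case being both limsups equal to $1$. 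You instead stay inside the domains $U_n$: the same Beardon--Minda lemma (which the paper itself quotes, so your covering-map reduction is not even needed as extra input) is recast in the pseudo-hyperbolic form $\|Df(w)\|\le m_{\tanh\dist(w,w')}(\|Df(w')\|)$, the Ces\`aro hypothesis is converted into decay of $d_n=\dist_{U_n}(f^n(z),f^n(z'))$ by Jensen's inequality for the concave function $\log m_T$ rather than by AM--GM, and the pair-dependence of the base $T=\tanh d_0$ is removed by your bootstrap, which plays exactly the role that the vanishing term $\tfrac1n\sum_k|w_k|$ plays in the paper. Your part (c) is also more self-contained: the bound $|\lambda_n(z)-\lambda_n(z_0)|\le\tanh r_n$ settles the case $c(z,z_0)=0$ directly, and the case $c(z,z_0)>0$ is disposed of by Theorem A's series criterion (both averages then tend to $1$), whereas the paper routes this through part (a). What each approach buys: the paper's reduction reuses its Section 2 machinery (also needed for Theorem A) and produces explicit modulus estimates of independent interest; your argument avoids Riemann maps entirely, treats arbitrary pairs $z,z'$ without passing through $z_0$ and the triangle inequality, and correctly isolates the two real subtleties, namely that only arithmetic means of the $\lambda_k$ are controlled (hence Jensen on $\log m_T$) and that a single pass gives only a pair-dependent rate, so the base distance must first be driven to $0$ before the uniform rate in (a) or the super-exponential decay in (b) can be extracted. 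The only cosmetic omission is the degenerate pairs with $f^k(z)=f^k(z')$, for which $d_n$ is eventually $0$ and every statement is trivial; you should set these aside explicitly before taking logarithms.
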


Once the  behaviour of orbits  in relation to each other within simply connected wandering domains is well understood,  we turn to the question of how these orbits interact with the boundaries of the wandering domains.
The concept of orbits `approaching the boundary' is in itself delicate to define since it depends on the shape of the sets $U_n$, which may become highly distorted (think for example of the ratio of the diameter of the domains to their conformal radius, which may tend to infinity).  There are alternative candidates for the definition of convergence to the boundary (see Section \ref{sec:convergence}), but in this paper we use the following definition.

\begin{defn}[Boundary convergence] \label{defn:tending to boundary}
Let $U$ be a simply connected wandering domain of a transcendental entire function $f$ and let $U_n$ be the Fatou component containing $f^n(U)$, for $n \in \N$. We say that the orbit of $z\in U$ \emph{converges to the boundary} (of $U_n$) if and only if $\operatorname{dist}(f^{n}(z),\partial U_{n})\to 0$ as $n \to \infty$.
\end{defn}

We show that, with this definition, the following trichotomy holds.

\begin{thmC}[Second classification theorem]
Let $U$ be a simply connected wandering domain  of a transcendental entire function $f$ and let $U_n$ be the Fatou component containing $f^n(U)$, for $n \in \N$. Then exactly one of the following holds:
\begin{itemize}
\item[\rm(a)]
  $\liminf_{n\to\infty} \operatorname{dist}(f^{n}(z),\partial U_{n})>0$  for all $z\in U$,
    that is, all orbits stay away from the boundary;
\item[\rm(b)] there exists a subsequence $n_k\to \infty$ for which $\operatorname{dist}(f^{n_k}(z),\partial U_{n_k})\to 0$ for all $z\in U$, while for a different subsequence $m_k\to\infty$ we have that
    \[\liminf_{k \to \infty} \operatorname{dist}(f^{m_k}(z),\partial U_{m_k})>0, \quad\text{for }z\in U;\]
\item[\rm(c)] $\operatorname{dist}(f^{n}(z),\partial U_{n})\to 0$ for all $z\in U$, that is, all orbits converge to the boundary.
\end{itemize}
\end{thmC}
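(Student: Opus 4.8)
The plan is to reduce everything to the internal hyperbolic geometry, using Theorem A as a black box for the contracting case and working directly with the semi-contracting and eventually isometric cases. The key device is to compare the Euclidean quantity $\dist(f^n(z),\partial U_n)$ with an intrinsic one. Fix a base point $z_0\in U$ and its orbit $z_0^n:=f^n(z_0)\in U_n$. By the standard quarter/Koebe estimates relating hyperbolic density to distance to the boundary in a simply connected domain, one has
\[
\frac{1}{2\,\dist(w,\partial U_n)}\le \rho_{U_n}(w)\le \frac{2}{\dist(w,\partial U_n)},\qquad w\in U_n,
\]
so $\dist(f^n(z),\partial U_n)\to 0$ is equivalent to $\rho_{U_n}(f^n(z))\to\infty$. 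The crucial point is then that, by Schwarz–Pick applied to $f:U_n\to U_{n+1}$, the ratio
\[
r_n(z):=\frac{\rho_{U_{n+1}}(f^{n+1}(z))}{\rho_{U_n}(f^n(z))}=\frac{1}{\lambda_n(z)\cdot |(f^n)'\text{-normalization}|}
\]
behaves multiplicatively along the orbit, and more usefully, the \emph{difference} of two nearby orbits controls how fast $\rho_{U_n}$ can grow: if two orbits stay at bounded hyperbolic distance (which they always do, since $\dist_{U_n}$ is decreasing), then $\rho_{U_n}(f^n(z))$ and $\rho_{U_n}(f^n(z'))$ are comparable up to a multiplicative constant depending only on that bounded distance (Harnack-type / Lipschitz behaviour of $\log\rho_{U_n}$ with respect to $\dist_{U_n}$). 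This immediately gives the \emph{for all $z\in U$} uniformity in each of (a), (b), (c): whether $\dist(f^n(z),\partial U_n)\to 0$, or stays bounded below along a subsequence, is independent of $z$.

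With uniformity in $z$ established, the trichotomy is a statement purely about the single sequence $d_n:=\dist(f^n(z_0),\partial U_n)$, and I would argue as follows. Either (a) $\liminf d_n>0$, which is case (a); or $\liminf d_n=0$, so there is a subsequence along which $d_{n_k}\to 0$. In the latter situation, either $\limsup d_n=0$ too, giving $d_n\to 0$ and case (c); or $\limsup d_n>0$, so there is a further subsequence $m_k$ with $d_{m_k}$ bounded below, which is case (b). Thus the \emph{trichotomy for $z_0$} is trivial set theory; the entire content is (i) transferring each alternative from $z_0$ to all $z\in U$, done above, and (ii) the assertion that the subsequential statement in (b) holds \emph{simultaneously for all $z$ with the same subsequences $n_k,m_k$} — but this is exactly what the comparability of $\rho_{U_n}(f^n(z))$ and $\rho_{U_n}(f^n(z_0))$ delivers, since $\dist(f^{n_k}(z),\partial U_{n_k})\to 0 \iff \rho_{U_{n_k}}(f^{n_k}(z))\to\infty \iff \rho_{U_{n_k}}(f^{n_k}(z_0))\to\infty$, and likewise for $m_k$ with $\liminf>0$.

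The step I expect to be the genuine obstacle is making the comparison between $\rho_{U_n}(f^n(z))$ and $\rho_{U_n}(f^n(z_0))$ genuinely \emph{uniform in $n$}, i.e.\ with a constant that does not degenerate as $n\to\infty$. The naive bound gives $\bigl|\log\rho_{U_n}(f^n(z))-\log\rho_{U_n}(f^n(z_0))\bigr|\le \dist_{U_n}(f^n(z),f^n(z_0))$ by the fact that $\log\rho_U$ is $1$-Lipschitz for $\dist_U$ on any hyperbolic domain (this follows from the distortion theorem for the hyperbolic metric, or directly by lifting to $\D$ via a Riemann map and using that $\log\rho_\D(\zeta)=\log\frac{2}{1-|\zeta|^2}$ together with the triangle inequality). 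Since $\dist_{U_n}(f^n(z),f^n(z_0))\le \dist_U(z,z_0)=:M<\infty$ for all $n$, the comparison constant is $e^{M}$, uniformly in $n$ — so in fact this obstacle dissolves once one invokes the correct Lipschitz statement. The remaining care is purely bookkeeping: checking that the Koebe-type two-sided bound between $\rho_{U_n}$ and $\dist(\cdot,\partial U_n)$ is legitimate (it requires $U_n$ simply connected and $\partial U_n$ to contain at least two points, both of which hold here, a wandering domain being hyperbolic), and noting that $\rho_{U_n}\to\infty$ versus $\rho_{U_n}$ bounded are the only two relevant dichotomies, whence the three cases exhaust all possibilities and are mutually exclusive by construction.
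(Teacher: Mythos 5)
Your proposal is correct and takes essentially the same route as the paper: its Lemma~\ref{lem:hyp density} is precisely your Harnack-type comparison of $\rho_{U_n}$ at two points (proved via a Riemann map and the Koebe distortion theorem, with constant $2$ rather than your claimed $1$ --- the slit plane $\C\setminus(-\infty,0]$ shows $2$ is sharp, and your disk-lifting sketch misses the contribution of $\log|\phi'|$), and Theorem~\ref{thm:everybody converges to the boundary or nobody} performs the same Schwarz--Pick transfer of each alternative from one orbit to all orbits, after which the trichotomy is the same elementary case analysis of the single sequence $\dist(f^n(z_0),\partial U_n)$. Since your argument only needs some uniform constant, the constant-$1$ slip is harmless.
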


We remark that we actually prove a stronger version of Theorem C (see Theorem \ref{thm:everybody converges to the boundary or nobody}), which takes into account different definitions of converging to the boundary.

\subsubsection*{Construction of examples}
Theorems A and C combine to give nine different dynamical types of simply connected wandering domains. A natural question to ask is whether all of these can be realized.  As far as we know, the existing examples of simply connected wandering domains in the literature belong to one of the following three cases:  contracting and converging to the boundary (e.g.~lifts of parabolic components); contracting and staying away from the boundary (e.g.~lifts of attracting components); isometric and staying away from the boundary (e.g.lifts of Siegel disks). We will use approximation theory (see Section \ref{sect:examples}) to construct examples of each of the nine possibilities. In fact, we present a new general technique to construct bounded simply connected wandering domains (see Theorem \ref{thm:main construction}) which allows us to keep good control on the internal dynamics, as well as on the degree of the resulting maps  from one Fatou component to the next. As a key step we prove the following general result to show the existence of bounded simply connected wandering domains. Its statement uses the following terminology.

\begin{defn} We say that a  curve $\sigma$ \emph{surrounds} a set $B$ if and only if $B$ is contained in a bounded complementary component of $\sigma$. Also, for a Jordan curve $\eta$ we denote by $\operatorname{int}\eta$ the  bounded component of $\C \setminus \eta$ and by $\operatorname{ext}\eta$ the unbounded component of $\C \setminus \eta$.
\end{defn}

We can now state our result.
 
\begin{thmD}[Existence criteria for wandering domains]\label{WDexist}
Let $f$ be a transcendental entire function and suppose that there exist Jordan curves $\gamma_n$ and $\Gamma_n$, $n\ge 0$, a bounded domain $D$, a subsequence $n_k\ra\infty$ and  compact sets $L_k$   (associated with $\Gamma_{n_k}$)  such that
\begin{itemize}
\item[\rm(a)] $\Gamma_n$ surrounds $\gamma_n$, for $n \geq 0$;
\item[\rm(b)]  for every $k,n, m\geq0$, $m\neq n$ the sets $L_k, \ov{D}, \Gamma_m$  are in $\operatorname{ext}\Gamma_n$;
\item[\rm (c)]  $\gamma_{n+1}$ surrounds $f(\gamma_n)$, for $n \geq 0$;
\item[\rm (d)] $f(\Gamma_n)$ surrounds $\Gamma_{n+1}$, for $n \geq 0$;
\item[\rm (e)] $f(\overline D \cup \bigcup_{k\ge 0} L_k)\subset D$;
\item[\rm (f)]$ \max\{\operatorname{dist}(z,L_{k}): z \in \Gamma_{n_k}\} = o(\operatorname{dist}(\gamma_{n_k}, \Gamma_{n_k}))\;\text{ as}\; k \to \infty.
$
\end{itemize}
Then there exists an orbit of simply connected wandering domains $U_n$ 
such that $\overline{\operatorname{int} \gamma_n} \subset U_n \subset \operatorname{int}\Gamma_n$, for $n \geq 0$.

Moreover, if there exists $z_n \in \operatorname{int}\gamma_n$ such that both  $f(\gamma_n)$ and $f(\Gamma_n)$ wind $d_n$ times around $f(z_n),$  then $f:U_n \to U_{n+1}$ has degree $d_n$, for $n \geq 0$.
\end{thmD}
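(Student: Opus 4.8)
The plan is to construct the wandering domains as the "kernels" of the nested sequence of annular regions trapped between $\gamma_n$ and $\Gamma_n$, and then to verify that the components so produced are genuine Fatou components that wander. I would start by setting $V_n := \operatorname{int}\Gamma_n$ and noting that by (c) and (d) the map $f$ sends $\overline{\operatorname{int}\gamma_n}$ into $\operatorname{int}\gamma_{n+1}$ and sends the exterior of $\Gamma_{n+1}$ into the exterior of $f(\Gamma_n) \supset$-side, so that $f^{-1}(\overline{\operatorname{int}\Gamma_{n+1}})$ contains a topological-disk neighbourhood of $\overline{\operatorname{int}\gamma_n}$ lying inside $\operatorname{int}\Gamma_n$; call the component of this preimage containing $\overline{\operatorname{int}\gamma_n}$ the set $W_n$. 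Intersecting the pullbacks, define $U_n$ to be the connected component of $\bigcap_{j \ge 0} (f^j)^{-1}(\operatorname{int}\Gamma_{n+j})$ containing $\overline{\operatorname{int}\gamma_n}$. By construction $f(U_n) \subset U_{n+1}$ and $\overline{\operatorname{int}\gamma_n}\subset U_n \subset \operatorname{int}\Gamma_n$, giving the required inclusions; moreover each $U_n$ is open, connected, and simply connected because it is a nested intersection of preimages of Jordan domains taken along a single component (Riemann–Hurwitz / the fact that a component of a preimage of a simply connected domain under a proper map restricted to that component is simply connected).

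Next I would show $U_n \subset F(f)$, i.e. that the $U_n$ are inside the Fatou set. This is where hypotheses (b), (e), (f) do the work. The point is that the family $\{f^j|_{U_n}\}_j$ omits the big compact sets $L_k$ and the set $\overline D$: indeed by (e) the set $\overline D \cup \bigcup_k L_k$ maps into $D$, hence $f^j(\overline D) \subset D$ for all $j$ and likewise the $L_k$ stay in $D$; and by (b) all these sets lie in $\operatorname{ext}\Gamma_n$ for every $n$, so $f^j(U_n) \subset \operatorname{int}\Gamma_{n+j}$ never meets $\overline D$. Thus the iterates restricted to $U_n$ omit a set of positive capacity (a fixed nonempty closed set), so by Montel's theorem $\{f^j|_{U_n}\}$ is normal and $U_n$ lies in a Fatou component $\widetilde U_n$. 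Hypothesis (f), comparing the distance from $\Gamma_{n_k}$ to $L_k$ with the "width" $\operatorname{dist}(\gamma_{n_k},\Gamma_{n_k})$ of the trapping annulus, is what forces the Fatou component to actually be as large as the trap along the subsequence — so that $\widetilde U_n$ cannot be strictly larger than $\operatorname{int}\Gamma_n$ at the wandering-domain stage; concretely one uses (f) to see that the hyperbolic diameter of $U_n$ inside $\widetilde U_n$ is controlled, pulling back the geometry from stage $n_k$, which in turn forbids $\widetilde U_n$ from engulfing any $\Gamma_m$, $m\ne n$, or $\overline D$. Hence $\widetilde U_n = U_n$ (more precisely the Fatou component equals the component we built), and since the $\Gamma_n$ are pairwise in each other's exteriors by (b), the components $U_n$ are pairwise disjoint: they cannot be periodic or preperiodic, so they are wandering domains.

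For the degree statement, I would argue as follows. By (c) and (d), for each $n$ the curve $f(\gamma_n)$ lies in $\operatorname{int}\gamma_{n+1}$ and $\Gamma_{n+1}$ lies in $\operatorname{int}f(\Gamma_n)$; so $f$ maps the closed annular region $A_n$ between $\gamma_n$ and $\Gamma_n$ over the annular region between $f(\gamma_n)$ and $f(\Gamma_n)$, which contains the annular region $A_{n+1}'$ between $\gamma_{n+1}$ and $\Gamma_{n+1}$. The hypothesis that both $f(\gamma_n)$ and $f(\Gamma_n)$ wind exactly $d_n$ times around the point $f(z_n)$ (with $z_n \in \operatorname{int}\gamma_n \subset U_n$, so $f(z_n)\in U_{n+1}$) means, by the argument principle applied to $f - f(z_n)$ on the curves $\gamma_n$ and $\Gamma_n$, that $f$ takes the value $f(z_n)$ exactly $d_n$ times in $\operatorname{int}\gamma_n$ and $d_n$ times in $\operatorname{int}\Gamma_n$ (counted with multiplicity) — in particular $f$ takes no further preimages of $f(z_n)$ in the annulus $A_n$, and more importantly no preimages of any value of $U_{n+1}$ outside the part of $U_n$ that we are considering. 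Since $f: U_n \to U_{n+1}$ is a proper holomorphic map between simply connected domains (properness follows because $U_n$ is a component of the full preimage of $U_{n+1}$ under this eventually-defined proper restriction, using that $\partial U_n$ maps to $\partial U_{n+1}$), its degree is well defined and equals the number of preimages, with multiplicity, of any point of $U_{n+1}$; taking the point $f(z_n)$ and using the winding-number computation gives degree $d_n$.

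The main obstacle I expect is the second step — proving the Fatou component one obtains is exactly $U_n$ and not something larger, and that the resulting components genuinely wander rather than collapse. The delicate part is extracting from the quantitative smallness condition (f) a genuine a priori bound on the size (say in the hyperbolic metric of a fixed model) of the Fatou component containing $U_n$, uniformly enough to rule out the component swallowing a neighbouring $\Gamma_m$ or the set $\overline D$; this is presumably where one must pull back carefully along the subsequence $n_k$ and combine the Schwarz–Pick contraction (as in the hyperbolic-distance framework set up before Theorem A) with the omitted-value normality, and it is the step most likely to need the full strength of all six hypotheses simultaneously.
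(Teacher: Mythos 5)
Your outline reproduces the paper's easy steps — Montel normality for points of $\operatorname{int}\gamma_n$ because all forward images avoid $\overline D$, and the final winding-number/properness argument for the degree is essentially the paper's — but the central step is acknowledged rather than proved, and the key mechanism is missing. Hypothesis (e) is not only an "omitted set" device: since $f(\overline D)\subset D$, the domain $D$ contains an attracting fixed point, so $D$ lies in an attracting Fatou component $V$, and by (e) each $L_k$ lies in the Fatou set, in components that map into $V$. The candidate components $U_n$ (the Fatou components containing $\overline{\operatorname{int}\gamma_n}$) are not in the grand orbit of $V$, so every $L_k$ is disjoint from every $U_n$. This is what converts (f) into a usable estimate: any point $z'\in\Gamma_{n_k}\cap U_{n_k}$ satisfies $\operatorname{dist}(z',\partial U_{n_k})<\delta_k:=\max\{\operatorname{dist}(z,L_k):z\in\Gamma_{n_k}\}$, because a point of $L_k$ within distance $\delta_k$ of $z'$ is outside $U_{n_k}$. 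The paper then supposes some $U_m$ meets $\Gamma_m$, joins a point of $\operatorname{int}\gamma_m$ to a point of $U_m\cap\Gamma_m$ by a curve of finite hyperbolic length in $U_m$, pushes it forward (hyperbolic length does not increase, by Schwarz--Pick), and derives a contradiction from the lower bound $\operatorname{dist}_U(z,w)\ge\tfrac12\log\bigl(1+|z-w|/\min\{\operatorname{dist}(z,\partial U),\operatorname{dist}(w,\partial U)\}\bigr)$, which at times $n_k$ gives hyperbolic distance at least $\tfrac12\log\bigl(1+\operatorname{dist}(\gamma_{n_k},\Gamma_{n_k})/\delta_k\bigr)\to\infty$ by (f). Your sketch never identifies that the $L_k$ lie in a different Fatou component, so (f) has no purchase in your argument and nothing rules out the Fatou component engulfing $\Gamma_n$, the other curves, or $\overline D$; this is a genuine gap, not a technicality.

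Two further problems. First, your $U_n$, defined as a connected component of $\bigcap_{j\ge0}(f^j)^{-1}(\operatorname{int}\Gamma_{n+j})$, is only a component of a $G_\delta$ set: it need not be open, and the claim that it is simply connected "because it is a nested intersection of preimages of Jordan domains" is not a proof; the paper avoids this by taking $U_n$ to be the Fatou component containing $\overline{\operatorname{int}\gamma_n}$ from the outset and proving $U_n\subset\operatorname{int}\Gamma_n$ afterwards. Second, simple connectivity of the actual Fatou component requires its own argument: a bounded wandering domain can a priori be multiply connected, and the paper excludes this using the result of Bergweiler--Rippon--Stallard that multiply connected wandering domains eventually contain round annuli $A(r_k,R_k)$ with $R_k/r_k\to\infty$, which is incompatible with properties (b) and (c); your Riemann--Hurwitz remark addresses only your intersection-defined set, not the Fatou component that the theorem is about.
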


 We use Theorem~D to construct examples of each of the nine possible types,  and  also to construct simply connected wandering domains that contain any prescribed (finite) number of orbits consisting of critical points.  A wandering domain~$U$ will be called {\em $k$-super-attracting} if there exist critical points $z_1, \ldots, z_k \in U$, such that $f^{n}(z_1), \ldots, f^n(z_k)$ are critical points of~$f$, for all $n \in \N$.

\begin{thmE}[All types are realizable]\label{realizable}
\emph{(a)}\quad For each of the nine possible types of simply connected wandering domains arising from Theorems A and C, there exists a transcendental entire function with a bounded, simply connected escaping wandering domain of that type.

\emph{(b)}\quad
  For each $k\in \N$, there exists a transcendental entire function~$f$ having a bounded, simply connected escaping wandering domain~$U$ which is $k$-super-attracting.
\end{thmE}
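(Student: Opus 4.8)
The plan is to prove Theorem~E constructively, and in a uniform way for all nine types. For each prescribed behaviour we would first fix a \emph{conformal model}: an abstract sequence of holomorphic proper self-maps of the unit disk, together with a marked point $z_0$ whose orbit we track, chosen to realize the desired internal dynamics. We would then realize this model by a sequence of plane domains $U_n$ of bounded diameter but of prescribed \emph{Euclidean} shape (in particular prescribed conformal radius at the marked orbit), which governs the boundary behaviour. Finally we would approximate the resulting piecewise-defined map by a single transcendental entire function, using approximation theory together with Theorem~D as the device that certifies the $U_n$ are genuine wandering domains. In this construction the $U_n$ are bounded, pairwise far apart and escape to $\infty$, so the resulting wandering domain is automatically bounded, simply connected (by the conclusion of Theorem~D) and escaping (the iterates tend to $\infty$ locally uniformly on $U$); thus only the internal type (Theorem~A) and the boundary type (Theorem~C) have to be arranged.

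For the \emph{internal} type it suffices, by the characterizations in Theorem~A and the remark following it, to prescribe the hyperbolic distortions $\lambda_n$ along the marked orbit $(f^n(z_0))_n$: for a \textbf{contracting} domain take a conformal model with $\lambda_n$ bounded away from $1$ (e.g.\ a fixed affine contraction $w\mapsto w/2$, or a fixed degree-two Blaschke product fixing $0$), so that $\sum_n(1-\lambda_n)=\infty$; for an \textbf{eventually isometric} domain take the model maps to be disk automorphisms from some index on, so that $f|_{U_n}$ is univalent for large $n$; and for a \textbf{semi-contracting} domain take univalent model maps whose images sit at hyperbolic depth $\asymp 2^{-n}$ inside the target (e.g.\ $w\mapsto(1-2^{-n})w$), so that $0<1-\lambda_n\lesssim 2^{-n}$, hence $\sum_n(1-\lambda_n)<\infty$ while $\lambda_n\neq 1$ for all $n$; Theorem~A then identifies the type (in the last case one also reads off $c(z,z')>0$ and strict monotonicity, as required by case~(2)). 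All three conditions are stable under small perturbations of the map---univalence and compact containment $f(U_n)\Subset U_{n+1}$ are open conditions, and the hyperbolic distortion depends continuously on the map---so they survive the approximation step provided the approximation errors are chosen summable.

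For the \emph{boundary} type we use that, by the Koebe distortion theorem, $\operatorname{dist}(f^n(z),\partial U_n)$ is comparable to the conformal radius of $U_n$ at $f^n(z)$ up to a factor depending only on $\operatorname{dist}_U(z,z_0)$; in particular it is enough to control the marked orbit, and the conformal radius at $f^n(z_0)$ is a purely conformal quantity, hence \emph{independent} of the internal model chosen above. Having fixed the conformal model, we are therefore free to realize the abstract disks by plane domains $U_n$ of bounded diameter but with prescribed conformal radius $r_n$ at $f^n(z_0)$: taking $r_n$ bounded below (roughly round domains) gives case~(a) of Theorem~C, taking $r_n\to 0$ (long, thin domains with the marked point pushed near an edge) gives case~(c), and taking $r_n$ with one subsequence tending to $0$ and a complementary subsequence bounded below gives case~(b). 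The quantitative freedom in hypothesis~(f) of Theorem~D---which lets the inner curves $\gamma_{n_k}$ and outer curves $\Gamma_{n_k}$ be pinched close together near the orbit relative to the error sets $L_k$---is precisely what permits these thin shapes. Combining the three internal choices with the three boundary choices yields all nine types, proving part~(a). For part~(b) we additionally prescribe, in the conformal model, $k$ marked points that are critical points of each model map and map forward to the corresponding critical points of the next; this forces the model degrees to be $\ge k+1$, which is permitted by the ``moreover'' clause of Theorem~D (take $d_n\ge k+1$), and the resulting $U_n$ then contain $k$ orbits of critical points, i.e.\ $U$ is $k$-super-attracting (and, since the hyperbolic distortion vanishes along a critical orbit, automatically super-contracting in the sense of Definition~\ref{def:strongly contracting} by Theorem~B).

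The main obstacle will be the approximation step itself: we must produce a \emph{single} entire function $f$ that, near each $\overline{U_n}$, agrees with the prescribed model map up to a summable error, while simultaneously (i) keeping the $U_n$ pairwise disjoint and escaping and keeping the marked orbit inside them, (ii) keeping the perturbed hyperbolic distortions within a summable error of the model ones so that the alternative of Theorem~A is not changed---crucially, using the stability facts above so that, for instance, the semi-contracting case does not collapse to the eventually isometric one---and (iii) retaining enough control on the auxiliary curves $\gamma_n,\Gamma_n$, the sets $L_k$ and the subsequence $n_k$ to verify hypotheses~(a)--(f) of Theorem~D in each of the nine cases and in the $k$-super-attracting case. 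This is exactly the situation handled by the general construction technique of Section~\ref{sect:examples}; the work that remains is the case-by-case bookkeeping of the nested shapes, degrees and error sequences, and the verification of (a)--(f) together with the distortion estimates, for each model.
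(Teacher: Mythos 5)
Your overall strategy (prescribe model self-maps of the disk along the orbit, realize them by approximation, certify the Fatou components with Theorem~D, and decouple the internal type from the boundary type) is essentially the paper's, and several of your choices match Section~6: a fixed degree-two Blaschke product fixing the marked point for the contracting case, disk automorphisms for the eventually isometric case, control of $\operatorname{dist}(f^n(z_0),\partial U_n)$ via the position of the marked orbit, and interpolation of $f$ and $f'$ along $k$ marked orbits for part~(b). However, there is a genuine gap in your semi-contracting case. The models you propose, $w\mapsto(1-2^{-n})w$ (and likewise the alternative ``fixed affine contraction $w\mapsto w/2$'' for the contracting case), are univalent but \emph{not proper} self-maps of $\D$. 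For the \emph{bounded} wandering domains that Theorem~D and the main construction produce --- and which Theorem~E requires --- the map $f:U_n\to U_{n+1}$ is proper and onto, so the induced maps $g_{n+1}=\varphi_{n+1}\circ f\circ\varphi_n^{-1}$ are finite Blaschke products; if the realized map has degree $1$ on $U_n$, it is a conformal bijection onto $U_{n+1}$, hence a hyperbolic isometry, and $\lambda_n=1$. You cannot make $f(U_n)$ sit at positive ``hyperbolic depth'' inside $U_{n+1}$, because $U_{n+1}$ is by definition the Fatou component containing $f(U_n)$ and, by properness of maps on bounded components, equals $f(U_n)$. So a realization of your univalent models does not yield $0<1-\lambda_n\lesssim 2^{-n}$: it collapses exactly to the eventually isometric case you were trying to avoid, and this is a structural obstruction, not an approximation error, so choosing summable errors does not repair it. The correct device --- the one the paper uses in its Example~2 --- is to take Blaschke products of degree at least $2$ fixing the marked point, with derivatives $\lambda_n<1$ but $\sum_n(1-\lambda_n)<\infty$ (for instance $b_n=\tilde\mu_n\circ\mu_n^2$ with $b_n'(0)=2a_n/(1+a_n^2)\to1$ sufficiently fast); then the degree clause of Theorem~D rules out eventual isometry while Theorem~A (via Theorem~\ref{Thm zero}) rules out contraction.

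Two secondary points. First, your mechanism for the boundary type (prescribing the conformal radius, e.g.\ by long thin domains) is not what the paper's main construction delivers: Theorem~\ref{thm:main construction} produces nearly round domains squeezed between $D(4n,r_n)$ and $D(4n,R_n)$ with $r_n,R_n\to1$, and the boundary behaviour is then read off from whether $|B_n(0)|\to0$, $\to1$, or oscillates (Lemma~\ref{bnboundary}); implementing genuinely thin shapes would force you to rebuild the construction with non-round $\gamma_n,\Gamma_n$, whereas pushing the marked orbit toward the boundary of round domains costs nothing. Second, your stability argument (``hyperbolic distortion depends continuously on the map'') is vaguer than what actually makes the bookkeeping work in the paper: the approximation lemma matches $f$ and $f'$ \emph{exactly} along the marked orbits, and the two-sided comparison \eqref{eqtn:double inequality} between $\dist_{U_n}$ and $\dist_{D_n}$, with constants tending to $1$, transfers the model computations verbatim, so no perturbation of the distortions along the marked orbit needs to be estimated at all.
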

Note that our examples in part~(b) of Theorem~E are super-contracting wandering domains that are not lifts of super-attracting components.

The bounded, simply connected wandering domains, $(U_n)$ say, constructed in Theorem~E all have a shape that tends to the shape of a Euclidean disk as $n \to \infty$, but in fact the construction can easily be modified to give wandering domains with different limiting shapes. Also, in forthcoming work we combine our new ideas with techniques introduced by Eremenko and Lyubich \cite{pathex} to construct examples of oscillating simply connected wandering domains that are bounded and have various types of internal dynamics.

Finally, we show that our methods  can be adapted to construct simply connected wandering domains bounded by Jordan curves. Theorem~E is proved by obtaining entire functions that approximate sequences of translates of Blaschke products associated with sequences of Jordan curves with the properties given in Theorem~D, and we show that, if these Blaschke products are in a certain sense uniformly expanding and have uniformly bounded degree, then the resulting wandering domains have Jordan curve boundaries. Our proof includes a result on the Euclidean lengths of vertical geodesics of annuli whose boundary components have finite length, which is of independent interest.

\subsubsection*{Structure of the paper}
The first part of the paper (Sections~2, 3 and 4) is devoted to studying the possible behaviours of orbits in simply connected wandering domains, proving Theorems~A,~B and~C. We begin in Section 2 by setting up related non-autonomous dynamical systems of self maps of the unit disk. We prove several results in this general setting which may be of wider interest. In Section~3 we use our results from Section~2 to prove Theorems~A and~B. We prove Theorem~C in Section~4.

The second part of the paper (Sections 5, 6 and 7) is devoted to the construction of examples. In Section~5 we give the proof of Theorem~D and develop a new general technique for constructing   bounded wandering domains. In Section 6 we use this technique to construct examples of every possible behaviour classified in the first part of the paper, proving Theorem~E. Finally, in Section 7 we show that, under certain conditions, our new construction technique gives simply connected wandering domains that are Jordan domains.

\subsection*{Acknowledgments}
We are grateful to the Universitat de Barcelona, to the IMUB and to the Open University for hosting part of this research. We would like to thank Chris Bishop, Xavier Jarque, Misha Lyubich, Lasse Rempe-Gillen and Dave Sixsmith for inspiring discussions.

\section{Non-autonomous dynamical systems of self maps of the unit disk}\label{sect:inner}

In this section we prove several results in the general setting of non-autonomous forward dynamical systems of holomorphic self maps of the unit disk fixing the origin. These results may be of wider interest with applications outside holomorphic dynamics. In the next section, we apply them to the case of transcendental entire functions with simply connected wandering domains in order to prove Theorem A and Theorem B.


Our proofs are based on hyperbolic distances in the unit disk and we make frequent use of the fact that

\begin{equation}\label{hdist0}
\dist_{\D}(w,0) = \int_{0}^{|w|} \frac{2\,dt}{1-t^2} =  \log \left( \frac{1 + |w|}{1 - |w|}\right), \quad \text{for } w \in \D.
\end{equation}

  In our first result, we characterize when the limits of such systems of holomorphic self maps of the unit disk are identically equal to zero, in terms of the values of the derivatives of the maps at~0. In particular, unless $|g_n'(0)|\to 1$ as $n\to\infty$, the limit of the maps $G_n$ is always zero.

\begin{thm}[Criterion for converging to zero]\label{Thm zero}
For each $n \in \N$, let $g_n: \D \to \D$ be holomorphic with $g_n(0) = 0$ and $|g_n'(0)| = \lambda_n$, and let $G_n = g_{n}\circ \cdots\circ g_1$.
\begin{itemize}
\item[\rm(a)]
If $\sum_{n=1}^{\infty}(1 - \lambda_n) = \infty$, then $G_n(w) \to  0$ as $n \to \infty$, for all $w \in \D$.

\item[\rm(b)]If $\sum_{n=1}^{\infty}(1 - \lambda_n) < \infty$, then $G_n(w) \nrightarrow 0$ as $n \to \infty$, for all $w \in \D$ for which $G_n(w) \neq 0$ for all $n \in \N$.
\end{itemize}
\end{thm}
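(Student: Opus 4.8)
The plan is to analyze the decreasing sequence $\dist_{\D}(G_n(w),0)$ and relate its limiting behaviour to the products $\prod_n \lambda_n$ via the Schwarz--Pick Lemma. First I would fix $w \in \D$ with $w \neq 0$ and set $r_n = |G_n(w)|$, noting that by Schwarz--Pick $\dist_{\D}(G_{n+1}(w),0) = \dist_{\D}(g_{n+1}(G_n(w)),g_{n+1}(0)) \le \dist_{\D}(G_n(w),0)$, so $r_n$ is decreasing and $r_n \to r_\infty \ge 0$; the claim is that $r_\infty = 0$ iff $\sum (1-\lambda_n) = \infty$. Using the explicit formula \eqref{hdist0}, writing $h(r) = \log\frac{1+r}{1-r}$, it suffices to show $h(r_n)\to 0$ iff the series diverges.

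For part (b), the key tool is the Schwarz lemma in its sharpened form: if $g:\D\to\D$ is holomorphic with $g(0)=0$, then for any $z \in \D$,
\[
\frac{|g(z)|}{|z|} \ge \frac{|g'(0)| + \text{(something)}}{\cdots},
\]
but more precisely one uses the estimate coming from applying the Schwarz--Pick inequality to $g(z)/z$ (which maps $\D$ into $\overline{\D}$ with value $g'(0)$ at the origin). Concretely, the function $\psi(z) = g(z)/z$ satisfies $|\psi(z)| \ge \frac{|\psi(0)| - |z|}{1 - |\psi(0)||z|}$ when $|\psi(0)| \ge |z|$, giving a lower bound $|g(z)| \ge |z|\cdot\frac{\lambda - |z|}{1-\lambda|z|}$ for $g_n$ with $\lambda = \lambda_n$. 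Iterating and taking $-\log(1-r)$ type quantities, one wants to show that if $\sum(1-\lambda_n)<\infty$ then $r_n$ stays bounded away from $0$. A cleaner route: since each $g_n(z)/z$ maps $\D$ to $\overline\D$ with value $\lambda_n$ (up to rotation) at $0$, the quantity $\log(1/r_n)$ can be controlled. Actually the most robust approach is: $\dist_\D(G_n(w),0) \ge \dist_\D(w,0) - \sum_{k=1}^{n} \big(\dist_\D(G_{k-1}(w),0) - \dist_\D(G_k(w),0)\big)$ trivially, so one instead needs a lower bound on the tail; I would use that $\dist_\D(g_k(z),0) \ge \dist_\D(z,0) + \log\lambda_k$ fails in general, so instead bound the \emph{loss} $\dist_\D(G_{k-1}(w),0) - \dist_\D(G_k(w),0)$ from above by a constant times $(1-\lambda_k)$ (uniformly as long as $r_k$ stays in a compact subinterval of $(0,1)$, which is where a bootstrap argument enters), and conclude the total loss is finite, hence $r_\infty > 0$.

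For part (a), I would argue by contradiction / directly: suppose $r_n \to r_\infty > 0$. Then all $G_n(w)$ lie in the compact annulus $r_\infty \le |z| \le r_0$. On this compact set, the Schwarz--Pick estimate for a self-map $g$ of $\D$ fixing $0$ with $|g'(0)| = \lambda$ gives a quantitative contraction: $\dist_\D(g(z),0) \le \dist_\D(z,0) - \kappa(1-\lambda)$ for some $\kappa = \kappa(r_\infty, r_0) > 0$. The cleanest way to get this is to apply the Schwarz--Pick Lemma to the map $\phi\circ g$ where $\phi$ is suitable, or to use the standard sharp form: for $g(0)=0$, $\dfrac{1-|g(z)|}{1-|z|} \ge \dfrac{1-\lambda}{1+\lambda}\cdot(\text{bounded factor})$ — I would look up or re-derive the precise inequality $|g(z)| \le |z|\cdot\frac{|z|+\lambda}{1+\lambda|z|}$ (dual to the lower bound above), which yields $h(r_n) - h(r_{n+1}) \ge c\,(1-\lambda_{n+1})$ for $r_{n+1}$ in the compact range. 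Summing over $n$ forces $\sum(1-\lambda_n) < \infty$, contradicting the hypothesis; hence $r_n \to 0$, i.e.\ $G_n(w)\to 0$.

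The main obstacle I anticipate is handling part (b) cleanly: one needs the estimate on the per-step hyperbolic loss to be \emph{uniform} in the point, but a priori the $G_n(w)$ could drift toward $0$ where the constant $\kappa$ degenerates. The fix is a bootstrap: assume $\sum(1-\lambda_n)<\infty$, pick $N$ with $\sum_{n>N}(1-\lambda_n)$ small, and show inductively that $r_n$ cannot drop below $r_N/2$ (say) for $n > N$, because the accumulated loss from step $N$ onward is bounded by the small tail sum times a constant valid on $[r_N/2, 1)$. This self-consistent argument pins $r_n$ below away from $0$. A secondary technical point is that part (b) must be stated only for $w$ with $G_n(w) \neq 0$ for all $n$ — if some $G_n(w) = 0$ the orbit is eventually $0$ and the conclusion is vacuous/false — so the hypothesis is exactly what is needed and no extra care beyond invoking it is required. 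Both parts ultimately rest on the single quantitative Schwarz lemma $|z|\frac{\lambda-|z|}{1-\lambda|z|} \le |g(z)| \le |z|\frac{\lambda+|z|}{1+\lambda|z|}$ for self-maps of $\D$ fixing $0$ with $|g'(0)|=\lambda$, which I would establish first as a lemma.
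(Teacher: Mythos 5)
Your plan is correct, but it is not the route the paper takes for this theorem. The paper's proof is a Beardon--Carne style argument: for fixed $m$ it introduces the auxiliary self-map $\psi(w)=(g_{m+n}\circ\cdots\circ g_{m+1}(w))/w$, whose value at $0$ is the product $\lambda_{m+n}\cdots\lambda_{m+1}$, and then plays the hyperbolic triangle inequality and Schwarz--Pick against the equivalence $\sum_n(1-\lambda_n)=\infty \iff \lambda_{m+n}\cdots\lambda_{m+1}\to 0$, deriving a contradiction in each direction; no per-step modulus bookkeeping is needed. Your route instead rests on the two-sided sharpened Schwarz estimate $|z|\tfrac{\lambda-|z|}{1-\lambda|z|}\le|g(z)|\le|z|\tfrac{\lambda+|z|}{1+\lambda|z|}$ -- which is exactly the paper's Lemma~\ref{lem:condition convergence technical} and Corollary~\ref{corSchwarz}, proved there for later use -- and then argues: for (a), a quantitative per-step drop of $\dist_\D(G_n(w),0)$ proportional to $1-\lambda_{n+1}$ while the orbit stays in a compact annulus, forcing convergence of $\sum(1-\lambda_n)$ otherwise; for (b), a tail/bootstrap estimate keeping $|G_n(w)|$ bounded below. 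This is essentially the alternative proof the paper itself flags in the Remark after Theorem~\ref{contractionrate} (deriving Theorem~\ref{Thm zero} from the product estimates), with two small differences worth noting. First, your worry that the constant degenerates as the orbit drifts toward $0$ is unfounded: since $|G_n(w)|$ is decreasing, the factor $\tfrac{1+r}{1-r}$ along the orbit is bounded by $\tfrac{1+|w|}{1-|w|}$, so the multiplicative lower bound $|G_n(w)|\ge |G_N(w)|\prod_{k>N}\bigl(1-d_0(1-\lambda_k)\bigr)$ with a constant depending only on $|w|$ already does the job once $N$ is chosen with small tail sum (and $|G_N(w)|>0$ by hypothesis); your bootstrap works but is slightly more than is needed. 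Second, the paper's remark suggests Hurwitz's theorem is required on this route to pass to all $w$; your pointwise argument, applied separately to each admissible $w$, avoids that. What each approach buys: the paper's argument is shorter and avoids estimates on moduli, while yours yields explicit decay rates (in effect reproving Theorem~\ref{contractionrate}) as a byproduct.
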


\begin{proof}
We begin with ideas used by Beardon and Carne~\cite{BC92}. First, it follows from the hyperbolic triangle inequality and hyperbolic contraction that, if $\psi : \D \to \D$ is holomorphic, then for all $w\in\D$ we have
\begin{equation}\label{zero1}
\dD(0,\psi(w))\leq \dD(0,\psi(0))+\dD(\psi(0),\psi(w))\leq  \dD(0,\psi(0))+\dD(0,w),
\end{equation}
and, similarly,
\begin{equation}\label{zero2}
\dD(0,\psi(0))\leq \dD(0,\psi(w))+\dD(0,w), \text{ for all } w\in\D.
\end{equation}

We also use the fact that
\begin{equation}\label{zero3}
\sum_{n=1}^{\infty}(1 - \lambda_n) = \infty \iff \lambda_{m+n}\cdots \lambda_{m+1} \to 0 \text{ as } n \to \infty, \quad \text{for all } m \in \N.
\end{equation}
In the case when $\lambda_n \ne 0$, for all~$n$, and the right-hand side is $\lambda_{n}\cdots \lambda_{1} \to 0 \text{ as } n \to \infty$, this statement is a standard property of infinite products proved by taking logarithms. Here it is possible that some or all of the terms $\lambda_n$ are zero, so the right-hand side of \eqref{zero3} takes account of these possibilities.

Now take $w_0 \in \D$ and, for simplicity, denote $G_n(w_0)$ by $w_n$, for $n \in \N$.

To prove part (a), we assume that $\lambda_{m+n}\cdots \lambda_{m+1} \to 0$ as $n \to \infty$, for all $m \in \N$, and deduce that $w_n \to 0$ as $n\to \infty$. Suppose that $w_n \nrightarrow 0$ as $n\to \infty$. Since $w_n = g_n(w_{n-1})$,  we deduce by Schwarz's Lemma that  $|w_n| \leq |w_{n-1}|$, and hence that $|w_n|$ decreases to some $d>0$ as $n \to \infty$.

First choose $m \in \N$ so large that $|w_m|$ is sufficiently close to $d$ to ensure that
\begin{equation}\label{zero4}
\dD(0,w_{n+m}/w_m) > \dD(0,w_m), \quad \text{for } n \in \N.
\end{equation}
Next we fix $n \in \N$ and
define the holomorphic map
\[
\psi(w) = (g_{m+n} \circ \cdots \circ g_{m+1}(w))/w, \quad \text{for } w \in \D \setminus \{0\},
\]
with
\[
\psi(0) = (g_{m+n} \circ \cdots \circ g_{m+1})'(0) = \lambda_{m+n} \cdots \lambda_{m+1}.
\]
Applying \eqref{zero1} to the function $\psi$ at the point $w=w_m$ gives
\begin{eqnarray*}
\dD(0,w_{n+m}/w_m) & = & \dD(0,\psi(w_m))\\
& \leq & \dD(0,\psi(0)) + \dD(0,w_m)\\
& \leq & \dD(0,\lambda_{m+n} \cdots \lambda_{m+1}) + \dD(0,w_m).
\end{eqnarray*}
Since we have assumed that $\lambda_{m+n}\cdots \lambda_{m+1} \to 0$ as $n \to \infty$,  it follows that $\dD(0,w_{n+m}/w_m) \leq \dD(0,w_m)$, for $m$ sufficiently large. This, however, contradicts~\eqref{zero4}, showing that  $w_n \to 0$ as $n \to \infty$.

To prove part (b), we assume that, for some $m_0 \in \N$, $\lambda_{m_0+n}\cdots \lambda_{m_0+1} \to \lambda > 0$ as $n \to \infty$, and deduce that whenever $w_n \neq 0$, for all $n \in \N$, we  have $w_n \nrightarrow 0$ as $n \to \infty$. Suppose that $w_n \rightarrow 0$ as $n\to \infty$.

First choose $m$ so large that $m \geq m_0$ and
\begin{equation}\label{zero5}
\dD(0,w_m) < \dD(0,\lambda),
\end{equation}
and note that, for such $m$,
\begin{equation}\label{zero6}
\lambda_{m+n}\cdots \lambda_{m+1} \geq \lambda_{m+n}\cdots\lambda_{m_0+1} = \lambda_{m_0 + (m-m_0)+n}\cdots\lambda_{m_0+1} \geq \lambda, \quad \text{for } n \in \N.
\end{equation}
Next we fix $n \in \N$ and apply~\eqref{zero2} with $\psi$ defined as earlier and $w=w_m$ to give
\[
\dD(0,\lambda_{m+n}\cdots \lambda_{m+1}) \leq \dD(0,w_{m+n}/w_m) + \dD(0,w_m).
\]
Letting $n \to \infty$, we obtain a contradiction to~\eqref{zero5} in view of~\eqref{zero6} and hence to the  supposition that $w_n \to 0$ as $n \to \infty$. This completes the proof.
\end{proof}

The following corollary to Theorem~\ref{Thm zero} shows that if  the hyperbolic distance between two distinct orbits converges to zero, then the same occurs for every pair of orbits.

\begin{cor}\label{cor zero}
For $n \in \N$, let $g_n: \D \to \D$ be holomorphic and let $G_n = g_{n}\circ \cdots\circ g_1$. If there exist $w_0,w_0' \in \D$ such that $G_n(w_0') \neq G_n(w_0)$ for all $n \in \N$ and $\dist_{\D}(G_n(w_0'),G_n(w_0)) \to 0$ as $n \to \infty$, then
\[
\dist_{\D}(G_n(w),G_n(w_0)) \to 0 \text{ as } n \to \infty, \quad \text{for all } w \in \D.
\]
\end{cor}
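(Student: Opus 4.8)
The plan is to reduce the statement about an arbitrary orbit $G_n(w)$ to the situation of Theorem~\ref{Thm zero}. The key idea is to ``recenter'' the non-autonomous system at the given orbit $w_n := G_n(w_0)$ by post-composing each $g_n$ with a Möbius automorphism of $\D$ that sends $w_n$ to $0$. Concretely, for each $n$ let $M_n$ be the automorphism of $\D$ with $M_n(w_n) = 0$, and set $\widetilde g_n := M_n \circ g_n \circ M_{n-1}^{-1}$, where $M_0$ is the automorphism sending $w_0$ to $0$. Then each $\widetilde g_n : \D \to \D$ is holomorphic with $\widetilde g_n(0) = 0$, and if we write $\widetilde G_n := \widetilde g_n \circ \cdots \circ \widetilde g_1$ we have $\widetilde G_n = M_n \circ G_n \circ M_0^{-1}$. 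Since the $M_n$ are hyperbolic isometries of $\D$, hyperbolic distances are unchanged: $\dist_\D(\widetilde G_n(v), \widetilde G_n(v')) = \dist_\D(G_n(M_0^{-1}(v)), G_n(M_0^{-1}(v')))$ for all $v, v' \in \D$.

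Next I would translate the two hypotheses into this recentered picture. Applying the correspondence to the pair $(w_0, w_0')$: since $M_0(w_0) = 0$, we get $\widetilde G_n(0) = 0$ and $\widetilde G_n(M_0(w_0')) = M_n(G_n(w_0'))$, so the hypothesis $\dist_\D(G_n(w_0'), G_n(w_0)) \to 0$ becomes $\dist_\D(\widetilde G_n(v_0'), 0) \to 0$, where $v_0' := M_0(w_0') \ne 0$; moreover $\widetilde G_n(v_0') \ne 0$ for all $n$ by the non-coincidence hypothesis. By \eqref{hdist0}, $\dist_\D(\widetilde G_n(v_0'), 0) \to 0$ is equivalent to $\widetilde G_n(v_0') \to 0$. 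Now I invoke the contrapositive of Theorem~\ref{Thm zero}(b): if we had $\sum_{n=1}^\infty (1 - \widetilde\lambda_n) < \infty$, where $\widetilde\lambda_n := |\widetilde g_n'(0)|$, then $\widetilde G_n(v_0') \not\to 0$ (as $v_0'$ is a point whose orbit never hits $0$), a contradiction. Hence $\sum_{n=1}^\infty (1 - \widetilde\lambda_n) = \infty$, and Theorem~\ref{Thm zero}(a) then gives $\widetilde G_n(v) \to 0$ as $n \to \infty$ for \emph{all} $v \in \D$.

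Finally I would undo the recentering. Fix an arbitrary $w \in \D$ and set $v := M_0(w) \in \D$. By the previous step, $\widetilde G_n(v) \to 0$, i.e. $M_n(G_n(w)) \to 0$. Since also $M_n(G_n(w_0)) = M_n(w_n) = 0$, the isometry property of $M_n$ gives
\[
\dist_\D(G_n(w), G_n(w_0)) = \dist_\D(M_n(G_n(w)), M_n(G_n(w_0))) = \dist_\D(M_n(G_n(w)), 0) \to 0,
\]
which is exactly the claim. The one point requiring a little care — and the step I expect to be the main (minor) obstacle — is verifying that the recentered maps $\widetilde g_n$ are genuinely well-defined holomorphic self-maps of $\D$ and that the composition identity $\widetilde G_n = M_n \circ G_n \circ M_0^{-1}$ holds with the indices correctly telescoping; this is routine bookkeeping with Möbius maps, but it is where an off-by-one in the indexing of the $M_n$ could creep in. Everything else is a direct application of Theorem~\ref{Thm zero} together with the isometry-invariance of the hyperbolic metric and the monotone relationship \eqref{hdist0} between $\dist_\D(\cdot, 0)$ and the Euclidean modulus.
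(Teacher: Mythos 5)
Your proposal is correct and follows essentially the same route as the paper's proof: conjugating by M\"obius maps $M_n$ with $M_n(G_n(w_0))=0$ to produce origin-fixing maps, applying Theorem~\ref{Thm zero} (part (b) in contrapositive form, then part (a)) to the recentered system, and transferring back via the isometry property. The indexing and the identity $\widetilde G_n = M_n \circ G_n \circ M_0^{-1}$ check out exactly as in the paper, so there is no gap.
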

\begin{proof}
For each $n \in \N$, let $w_n = g_n(w_{n-1})$ and, for $n \geq 0$, let $M_n:\D \to \D$ be a M\"obius map satisfying $M_n(w_n)=0$. Then, for each $n \in \N$, the map $h_n=M_n \circ g_n \circ M_{n-1}^{-1}$ is a holomorphic self map of the unit disk and $h_n(0)=0$. For $n \in \N$, let $H_n := h_{n}\circ \cdots\circ h_1$ and notice that $H_n(0)=0$.
Since M\"obius maps are isometries and $H_n = M_n \circ G_n \circ M_0^{-1}$, for $n \in \N$, we have
\begin{align*}
\dist_{\D}(0, H_n(M_0(w_0'))) &= \dist_{\D}(H_n(0), H_n(M_0(w_0'))) \\
&= \dist_{\D}(M_n\circ G_n\circ M_0^{-1}(0),M_n\circ G_n\circ M_0^{-1}\circ M_0 (w_0'))\\
&= \dist_{\D}(M_n\circ G_n(w_0),M_n\circ G_n(w_0'))\\
&= \dist_{\D}(G_n(w_0),G_n(w_0')) \to 0 \quad \text{as } n \to \infty,
\end{align*}
and hence $H_n(M_0(w_0')) \to 0$ as $n \to \infty$. Since $H_n(M_0(w_0')) = M_n(G_n(w_0')) \neq 0$, for each $n \in \N$, it follows from Theorem~\ref{Thm zero} that $H_n(w') \to 0$ as $n \to \infty$ for all $w' \in \D$. The result now follows since
\[
\dist_{\D}(G_n(w),G_n(w_0)) = \dist_{\D}(H_n(M_0(w)),H_n(0)) = \dist_{\D}(H_n(M_0(w)),0), \quad \text{for } w \in \D. \qedhere
\]
\end{proof}

Theorem~\ref{Thm zero} and Corollary~\ref{cor zero} will be used in the proof of Theorem A (see Section~\ref{sec:proofofthmA}).

We now prove several results giving estimates for the {\it rate} at which limits tend to zero in the case when the limit in Theorem~\ref{Thm zero} is identically equal to zero. The results proven in the remainder of this section will be used in Section~\ref{sec:subclassification} to prove Theorem B, that is, the subclassification of contracting wandering domains.

We use the following result which includes a generalization of Schwarz's Lemma.

\begin{lem}[Variation of Schwarz's Lemma] \label{lem:condition convergence technical} Let $\psi:\D\ra\D$ be holomorphic. Then
\[\frac{|\psi(0)|-|w|}{1-|\psi(0)||w|}\leq |\psi(w)|\leq\frac{|\psi(0)|+|w|}{1+|\psi(0)||w|}, \quad \text{ for $w \in\D$}. \]
\end{lem}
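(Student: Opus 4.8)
The plan is to obtain the two-sided estimate from the hyperbolic contraction property of $\psi$ together with the hyperbolic triangle inequality — precisely the ingredients already packaged in \eqref{zero1}--\eqref{zero2} — and then to convert the resulting inequality between hyperbolic distances into one between moduli by an elementary computation using \eqref{hdist0}.

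First I would apply \eqref{zero1} and \eqref{zero2} to the map $\psi$ at the point $w$; together they yield
\[
\bigl|\dD(0,\psi(0))-\dD(0,\psi(w))\bigr|\;\le\;\dD(0,w),\qquad w\in\D.
\]
(Equivalently, $\varphi_{\psi(0)}\circ\psi$ fixes the origin, where $\varphi_a(z)=(a-z)/(1-\overline{a}z)$, so this is Schwarz's Lemma combined with the hyperbolic triangle inequality — the promised variation of Schwarz's Lemma.) Next I would substitute the formula \eqref{hdist0}, namely $\dD(0,z)=\log\frac{1+|z|}{1-|z|}$. Writing $A=\frac{1+|\psi(0)|}{1-|\psi(0)|}$ and $R=\frac{1+|w|}{1-|w|}$, the displayed inequality becomes $A/R\le\frac{1+|\psi(w)|}{1-|\psi(w)|}\le AR$. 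Since $t\mapsto\frac{1+t}{1-t}$ is an increasing bijection of $[0,1)$ onto $[1,\infty)$ with inverse $x\mapsto\frac{x-1}{x+1}$, applying this inverse throughout and simplifying the two resulting fractions (the numerators collapse to $2(|\psi(0)|\pm|w|)$ and the denominators to $2(1\pm|\psi(0)||w|)$) gives exactly
\[
\frac{|\psi(0)|-|w|}{1-|\psi(0)||w|}\;\le\;|\psi(w)|\;\le\;\frac{|\psi(0)|+|w|}{1+|\psi(0)||w|}.
\]

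There is no genuine obstacle: the argument is a short chain of standard inequalities followed by routine algebra, and the only point needing a moment's care is the simplification after inverting the monotone function $t\mapsto\frac{1+t}{1-t}$, together with the harmless observation that when $|w|>|\psi(0)|$ the claimed lower bound is negative and hence holds trivially — consistent with, and implied by, the chain above since $|\psi(w)|\ge 0$.
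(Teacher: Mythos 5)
Your proposal is correct and follows essentially the same route as the paper: both sides are deduced from the triangle/contraction inequalities \eqref{zero1} and \eqref{zero2} applied to $\psi$, converted into moduli via \eqref{hdist0} and the monotonicity of $t\mapsto\frac{1+t}{1-t}$, with the same algebraic simplification. The only difference is cosmetic: the paper cites Beardon--Carne for the right-hand inequality and only writes out the left-hand one, whereas you carry out both computations explicitly.
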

\begin{proof}
The right-hand inequality arises from~\eqref{zero1} and is given in \cite[p.217]{BC92}. We prove the left-hand inequality using similar methods.
First note that it follows from~\eqref{zero2} that
\[
\dD(0,\psi(w))\geq \dD(0,\psi(0))-\dD(0,w),
\]
that is,
$$
\log\frac{1+|\psi(w)|}{1-|\psi(w)|}\geq\log\frac{1+|\psi(0)|}{1-|\psi(0)|}-\log\frac{1+|w|}{1-|w|}.
$$
By the monotonicity of the logarithm, this is equivalent to the following inequality:
$$\frac{1+|\psi(w)|}{1-|\psi(w)|}\geq \left(\frac{1+|\psi(0)|}{1-|\psi(0)|}\right)\left(\frac{1-|w|}{1+|w|}\right),$$
which gives
$$ |\psi(w)| \geq \frac{\left(\frac{1+|\psi(0)|}{1-|\psi(0)|}\right)\left(\frac{1-|w|}{1+|w|}\right)-1}{\left(\frac{1+|\psi(0)|}{1-|\psi(0)|}\right)\left(\frac{1-|w|}{1+|w|}\right)+1}=\frac{|\psi(0)|-|w|}{1-|\psi(0)||w|},$$
as claimed.
\end{proof}

We make frequent use of the following corollary of Lemma~\ref{lem:condition convergence technical}.

\begin{cor}\label{corSchwarz}
Let $g:\D \to \D$ be holomorphic with $g(0) = 0$ and $|g'(0)| = \lambda$. Then, for all $w \in \D$,
\[
|w| \left( \frac{\lambda - |w|}{1 - \lambda |w|} \right) \leq |g(w)| \leq |w| \left( \frac{\lambda + |w|}{1 + \lambda |w|} \right)
\]
\end{cor}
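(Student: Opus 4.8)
The plan is to reduce the statement directly to Lemma~\ref{lem:condition convergence technical} by passing from $g$ to the quotient $g(w)/w$. First I would dispose of the case $w=0$, where both displayed inequalities read $0\le 0$, and so assume $w\ne 0$. Since $g(0)=0$, the function
\[
\psi(w):=\frac{g(w)}{w}
\]
has a removable singularity at the origin and extends to a holomorphic map on $\D$ with $\psi(0)=g'(0)$; in particular $|\psi(0)|=\lambda$. By the Schwarz Lemma $|g(w)|\le|w|$ for all $w\in\D$, so $|\psi|\le 1$ on $\D$.

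Next I would check that $\psi$ is genuinely a holomorphic self map of the open disk. If $g$ is not a rotation $w\mapsto e^{i\theta}w$, then $|\psi|$ cannot attain the value $1$ inside $\D$ by the maximum modulus principle, so $\psi:\D\to\D$ and Lemma~\ref{lem:condition convergence technical} applies to it. In the remaining case $g(w)=e^{i\theta}w$ one has $\lambda=1$, and then
\[
|w|\left(\frac{\lambda-|w|}{1-\lambda|w|}\right)=|w|=|w|\left(\frac{\lambda+|w|}{1+\lambda|w|}\right)=|g(w)|,
\]
so the inequalities hold trivially.

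With $\psi:\D\to\D$ in hand, Lemma~\ref{lem:condition convergence technical} gives
\[
\frac{|\psi(0)|-|w|}{1-|\psi(0)||w|}\le|\psi(w)|\le\frac{|\psi(0)|+|w|}{1+|\psi(0)||w|},
\]
and substituting $|\psi(0)|=\lambda$, $|\psi(w)|=|g(w)|/|w|$ and multiplying through by $|w|>0$ yields exactly the claimed bounds. The only points needing care are verifying that the quotient $\psi$ maps into the open disk (handled by the maximum modulus principle) and treating the degenerate rotation case separately; apart from that the corollary is an immediate consequence of the preceding lemma, so I do not anticipate any real obstacle.
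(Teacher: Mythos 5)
Your proof is correct and follows essentially the same route as the paper: both apply Lemma~\ref{lem:condition convergence technical} to the quotient $\psi(w)=g(w)/w$ with $\psi(0)=g'(0)$. The extra care you take with the rotation case and the fact that $\psi$ maps into the open disk is a reasonable tidying of a detail the paper leaves implicit, but it does not change the argument.
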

\begin{proof}
The result follows by applying Lemma~\ref{lem:condition convergence technical} to the holomorphic map $\psi:\D \to \D$ defined by
\[
\psi(w) = g(w)/w, \quad \text{for } w \in \D \setminus \{0\},
\]
with $\psi(0) = g'(0)$.
\end{proof}

We first use Corollary~\ref{corSchwarz} to prove the following result giving rather precise upper and lower estimates of the rate at which the sequences $|G_n(w)|$ in Theorem~\ref{Thm zero} decrease, expressed in terms of the derivatives $|g'_n(0)|$. This result can be used to give a more direct proof of Theorem~\ref{Thm zero}; see the remark after the proof of Theorem \ref{contractionrate}.

\begin{thm}\label{contractionrate}
For each $n \in \N$, let $g_n: \D \to \D$ be holomorphic with $g_n(0) = 0$ and $|g_n'(0)| = \lambda_n = 1 - \mu_n$, and let $G_n = g_{n}\circ \cdots\circ g_1$.  If $w\in\D$ and $w_n=G_n(w)$, $n\in\N$, then
\begin{itemize}
\item[\rm(a)]
\begin{equation}\label{cont1}
|w_n| \leq |w| \prod_{k=1}^n (1 - c_w\mu_k), \quad \text{where } c_w = (1-|w|)/2;
\end{equation}
\item[\rm(b)] if $|w| \leq \lambda_k$, for $1 \leq k \leq n$, then
\begin{equation}\label{cont2}
|w_n| \geq |w| \prod_{k=1}^n (1-d_w\mu_k), \quad \text{where } d_w = \frac{1+|w|}{1-|w|}.   \end{equation}
\end{itemize}
\end{thm}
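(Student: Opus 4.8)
The plan is to prove both inequalities by induction on $n$, using Corollary~\ref{corSchwarz} at each step to relate $|w_n|$ to $|w_{n-1}|$. Write $w_n = g_n(w_{n-1})$, so that Corollary~\ref{corSchwarz} applied to $g_n$ at the point $w_{n-1}$ gives
\[
|w_{n-1}|\left(\frac{\lambda_n - |w_{n-1}|}{1-\lambda_n|w_{n-1}|}\right)\le |w_n|\le |w_{n-1}|\left(\frac{\lambda_n+|w_{n-1}|}{1+\lambda_n|w_{n-1}|}\right).
\]
Since by Schwarz's Lemma $|w_k|\le|w_{k-1}|\le\cdots\le|w|$ for all $k$, the sequence $|w_n|$ is nonincreasing, and this monotonicity is what lets us replace $|w_{n-1}|$ by the fixed quantity $|w|$ in the error terms.

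\emph{Part (a).} Starting from the upper bound above, substitute $\lambda_n = 1-\mu_n$ and estimate the factor $\dfrac{\lambda_n+|w_{n-1}|}{1+\lambda_n|w_{n-1}|}$ from above. A short manipulation gives
\[
\frac{\lambda_n+|w_{n-1}|}{1+\lambda_n|w_{n-1}|}
= 1 - \frac{(1-\lambda_n)(1-|w_{n-1}|)}{1+\lambda_n|w_{n-1}|}
= 1 - \frac{\mu_n(1-|w_{n-1}|)}{1+\lambda_n|w_{n-1}|}.
\]
Now bound the denominator $1+\lambda_n|w_{n-1}|\le 2$ and, using $|w_{n-1}|\le|w|$, bound $1-|w_{n-1}|\ge 1-|w|$; this yields
\[
\frac{\lambda_n+|w_{n-1}|}{1+\lambda_n|w_{n-1}|}\le 1 - \frac{\mu_n(1-|w|)}{2} = 1 - c_w\mu_n,
\]
with $c_w = (1-|w|)/2$. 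Hence $|w_n|\le |w_{n-1}|(1-c_w\mu_n)$, and iterating from $|w_0|=|w|$ gives \eqref{cont1}. (If some factor $1-c_w\mu_n$ is negative the inequality is trivial, since the left side is nonnegative; but in fact $c_w\mu_n\le \mu_n/2\le 1/2$, so all factors lie in $[1/2,1]$.)

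\emph{Part (b).} Here we use the lower bound and the extra hypothesis $|w|\le\lambda_k$ for $1\le k\le n$, which together with $|w_{k-1}|\le|w|$ guarantees $\lambda_k-|w_{k-1}|\ge 0$, so the lower bound in Corollary~\ref{corSchwarz} is genuinely nonnegative and an induction is possible. Write
\[
\frac{\lambda_n-|w_{n-1}|}{1-\lambda_n|w_{n-1}|}
= 1 - \frac{(1-\lambda_n)(1+|w_{n-1}|)}{1-\lambda_n|w_{n-1}|}
= 1 - \frac{\mu_n(1+|w_{n-1}|)}{1-\lambda_n|w_{n-1}|}.
\]
To get a lower bound on this factor we bound the subtracted term from above: $1+|w_{n-1}|\le 1+|w|$, and $1-\lambda_n|w_{n-1}|\ge 1-|w_{n-1}|\ge 1-|w|$ (using $\lambda_n\le 1$), so
\[
\frac{\lambda_n-|w_{n-1}|}{1-\lambda_n|w_{n-1}|}\ge 1 - \frac{\mu_n(1+|w|)}{1-|w|} = 1 - d_w\mu_n,
\]
with $d_w = (1+|w|)/(1-|w|)$. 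Thus $|w_n|\ge |w_{n-1}|(1-d_w\mu_n)$ provided the right-hand factor is nonnegative; if it is negative then $1-d_w\mu_n<0\le |w_n|/|w_{n-1}|$ and \eqref{cont2} holds trivially as well (the product on the right of \eqref{cont2} is then $\le 0$). Iterating from $|w_0|=|w|$ yields \eqref{cont2}.

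The routine part is the algebraic identity rewriting $(\lambda\pm t)/(1\pm\lambda t)$ as $1\mp(1-\lambda)(1\mp t)/(1\pm\lambda t)$; the only genuine point requiring care is keeping track of when the one-step factors can go negative, so that the induction is not derailed — but as noted, in those cases the claimed inequalities are automatic because $|w_n|\ge 0$. The sign condition $|w|\le\lambda_k$ in part~(b) is exactly what is needed to ensure the single-step lower bound from Corollary~\ref{corSchwarz} is a valid (nonnegative) estimate at every stage of the induction.
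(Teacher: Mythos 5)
Your main line of argument is essentially the paper's own proof: both parts are obtained from the one-step bounds of Corollary~\ref{corSchwarz}, combined with the Schwarz monotonicity $|w_k|\le|w|$, and iterated. Part (a) is the paper's computation verbatim (and your remark that $c_w\mu_n\le 1/2$, so those factors lie in $[1/2,1]$, is correct, so no sign issue arises there). In part (b) your exact identity $\frac{\lambda-t}{1-\lambda t}=1-\frac{(1-\lambda)(1+t)}{1-\lambda t}$ followed by bounding numerator and denominator is just an unpackaged form of the elementary estimate $\frac{\lambda-r}{1-\lambda r}\ge 1-\frac{1+r}{1-r}(1-\lambda)$, $0<r<\lambda\le1$, that the paper quotes, so there is no substantive difference in the route.

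The one place where you go beyond the paper --- the parenthetical disposing of negative factors in part (b) --- is incorrect. If an \emph{even} number of the factors $1-d_w\mu_k$ are negative, the right-hand side of \eqref{cont2} is positive, not $\le 0$, so that case is not trivial; and the iteration genuinely breaks there, since multiplying $|w_{n-1}|\ge|w_{n-2}|(1-d_w\mu_{n-1})$ by a negative factor $1-d_w\mu_n$ reverses the inequality. Indeed the product bound can actually fail in that regime: take $g_1=g_2=b$ with $b(z)=z\,\frac{1-2z}{2-z}$ (a degree two Blaschke product with $b(0)=0$, $b'(0)=\tfrac12$) and $w=\tfrac12$, so that $|w|\le\lambda_1=\lambda_2=\tfrac12$; then $w_1=b(\tfrac12)=0$ and $w_2=0$, while $d_w=3$, $1-d_w\mu_k=-\tfrac12$, and $|w|\prod_{k=1}^{2}(1-d_w\mu_k)=\tfrac18>0$. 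So the negative-factor case cannot be absorbed as ``automatic''; the chained estimate \eqref{cont2} must be read with each one-step factor nonnegative, i.e.\ $d_w\mu_k\le1$ (as happens in all the intended applications, where $\mu_k\to0$). To be fair, the paper's proof chains the one-step inequalities without comment on this point either, so your core argument is in no worse shape than the original --- but the claim you added to close the gap is false as stated, and you should instead record the proviso $d_w\mu_k\le 1$ (or start the product at an index beyond which it holds) rather than argue the case is trivial.
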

\begin{proof}
Set $w_0=w$. We begin the proof of part (a) by noting that it follows from Corollary~\ref{corSchwarz} that, for $k \geq 0$ and $w \in \D$,
\begin{eqnarray*}
|w_{k+1}|  =   |g_{k+1}(w_k)| & \leq & |w_k| \left( \frac{\lambda_{k+1} + |w_k|}{1 + \lambda_{k+1} |w_k|} \right)\\
& = & |w_k| \left( 1 - \frac{\mu_{k+1}( 1- |w_k|)}{1 + \lambda_{k+1} |w_k|} \right) \\
& \leq & |w_k| (1-\frac{ \mu_{k+1}(1-|w_k|)}{2}))\\
& \leq & |w_k|(1-c_w\mu_{k+1}),
\end{eqnarray*}
where the third inequality follows because $\lambda_{k+1} |G_k(w)|<1$ and the last inequality follows because $|w_k|=|G_k(w)| \leq |w|$ by Schwarz's Lemma.
The result of \eqref{cont1} now follows and this completes the proof of part (a).

We now prove part~(b). Using Corollary~\ref{corSchwarz} again,
\begin{equation}\label{Gnlower}
|w_{k+1}|  =   |g_{k+1}(w_{k})|  \ge  |w_{k}| \left( \frac{\lambda_{k+1} - |w_k|}{1 - \lambda_{k+1} |w_k|} \right).
\end{equation}
Now we use the elementary calculus estimate that
\[
\frac{\lambda-r}{1-\lambda r} \ge 1-\left(\frac{1+r}{1-r}\right)(1-\lambda),\quad \text{for }0<r<\lambda \le1,
\]
to deduce from \eqref{Gnlower} that, for $k \geq 0$, if $|w|\le \lambda_{k+1}$, then
\[
|w_{k+1}| \ge |w_k|\left(1-\left(\frac{1+|w_k|}{1-|w_k|}\right)\mu_{k+1}\right) \ge |w_k|\left(1-\left(\frac{1+|w|}{1-|w|}\right)\mu_{k+1}\right),
\]
using the fact that $|w_k|=|G_k(w)| \le |w|$ again. The result of \eqref{cont2} now follows and this completes the proof of part (b).
\end{proof}

\begin{rem*}
Theorem~\ref{contractionrate} can be used  to give a proof of Theorem~\ref{Thm zero}. To do so,  it is first necessary to use Hurwitz' Theorem in order to show that either $G_n(w) \to 0$ as $n \to \infty$ for all $w \in \D$ or $G_n(w) \to 0$ as $n \to \infty$ only for those points $w \in \D$ for which $G_n(w) = 0$ eventually.
\end{rem*}

We now prove another result giving upper estimates for the rate at which the sequences $|G_n(w)|$ decrease, this time expressed in terms of the average of the derivatives $|g_n'(0)|$. The proof of this result is also based on Corollary~\ref{corSchwarz}.

\begin{thm}\label{avcont}
For each $n \in \N$, let $g_n: \D \to \D$ be holomorphic with $g_n(0) = 0$ and $|g_n'(0)| = \lambda_n = 1 - \mu_n$, and let $G_n = g_{n}\circ \cdots\circ g_1$. Then, for all $n \in \N$, if $w_0 \in \D$ and $w_n = G_n(w_0)$, for $n \in \N$,
\begin{equation}\label{amgm}
|w_n| \leq \left( \frac{1}{n}\sum_{k=1}^n \lambda_k + \frac{1}{n} \sum_{k=0}^{n-1} |w_k|\right)^n.
\end{equation}
Hence
\begin{itemize}
\item[\rm(a)]
if
\[
\limsup_{n \to \infty} \frac{1}{n} \sum_{k=1}^n \lambda_k = a < 1,
\]
then
\[
|G_n(w)| = O(c^n)\;\text{ as } n \to \infty, \quad \text{for } w \in \D, \text{ where } c \in (a,1);
\]
\item[\rm(b)]
if
\[
\lim_{n \to \infty} \frac{1}{n} \sum_{k=1}^n \lambda_k = 0,
\]
then
\[
|G_n(w)|^{1/n} \to 0 \;\text{ as } n \to \infty, \quad \text{for } w \in \D.
\]
\end{itemize}
\end{thm}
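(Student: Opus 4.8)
The plan is to first establish the master inequality \eqref{amgm} via an iterated application of Corollary~\ref{corSchwarz} combined with the AM--GM inequality, and then to derive parts (a) and (b) as elementary consequences. First I would use Corollary~\ref{corSchwarz} in the form
\[
|w_{k+1}| = |g_{k+1}(w_k)| \leq |w_k| \cdot \frac{\lambda_{k+1} + |w_k|}{1 + \lambda_{k+1}|w_k|} \leq \lambda_{k+1} + |w_k|,
\]
where the last step is the trivial bound $\frac{\lambda+r}{1+\lambda r} \le 1$ followed by... actually more carefully one wants $|w_{k+1}| \le \lambda_{k+1}+|w_k|$, which one can see directly since $\frac{\lambda+r}{1+\lambda r}\le 1$ gives $|w_{k+1}|\le |w_k|\le \lambda_{k+1}+|w_k|$ trivially (here I will double-check the precise chain of elementary estimates actually needed: the cleaner route may be to bound each factor $\frac{\lambda_{k+1}+|w_k|}{1+\lambda_{k+1}|w_k|}$ and multiply). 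Iterating from $k=0$ to $k=n-1$ gives
\[
|w_n| \leq \prod_{k=1}^{n} \frac{\lambda_k + |w_{k-1}|}{1 + \lambda_k |w_{k-1}|} \leq \prod_{k=1}^{n} (\lambda_k + |w_{k-1}|),
\]
and then the AM--GM inequality applied to the $n$ nonnegative numbers $\lambda_k + |w_{k-1}|$, $1 \le k \le n$, yields
\[
|w_n| \le \left( \frac{1}{n} \sum_{k=1}^{n} (\lambda_k + |w_{k-1}|) \right)^n = \left( \frac{1}{n}\sum_{k=1}^n \lambda_k + \frac{1}{n} \sum_{k=0}^{n-1} |w_k|\right)^n,
\]
which is precisely \eqref{amgm}.

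For part (a), suppose $\limsup_{n\to\infty} \frac1n \sum_{k=1}^n \lambda_k = a < 1$ and fix $c \in (a,1)$; pick $b$ with $a < b < c$. By Theorem~\ref{Thm zero}(a) (applicable since $\sum(1-\lambda_n) = \infty$ follows from $\frac1n\sum\lambda_k \to$ something $<1$ along a subsequence, hence $\sum(1-\lambda_n)=\infty$) we have $|w_n| \to 0$, so in particular $|w_n| \le c - b$ for all $n \ge N_0$, and $\frac1n \sum_{k=1}^n \lambda_k \le b$ for $n \ge N_1$. For $n$ large, splitting the Cesàro average of $|w_k|$ into the first $N_0$ terms (a fixed bounded contribution that is $o(1)$ after division by $n$) and the rest (bounded by $c-b$), the bracket in \eqref{amgm} is at most $b + (c-b) + o(1) = c + o(1)$; choosing $n$ large enough that the $o(1)$ term is below any prescribed $\delta$ with $c + \delta < 1$ still being replaceable — one instead absorbs: pick $c' \in (c,1)$, get bracket $\le c'$ eventually, so $|w_n| \le (c')^n$; relabelling $c'$ as the "$c$" in the statement gives $|G_n(w)| = O(c^n)$. (I will present this with the quantifiers in the cleaner order: choose $c\in(a,1)$, then choose $\varepsilon>0$ with $a+\varepsilon<c$, use $|w_n|\to 0$ and the $\limsup$ hypothesis to make the bracket $\le a+\varepsilon + o(1) \le c$ eventually.)

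For part (b), the hypothesis $\frac1n \sum_{k=1}^n \lambda_k \to 0$ forces $\sum(1-\lambda_n)=\infty$, so again $|w_n|\to 0$ by Theorem~\ref{Thm zero}(a); hence both $\frac1n\sum_{k=1}^n\lambda_k$ and $\frac1n\sum_{k=0}^{n-1}|w_k|$ (a Cesàro average of a null sequence) tend to $0$, so the bracket in \eqref{amgm} tends to $0$, and taking $n$-th roots gives $|w_n|^{1/n} \to 0$. The one point requiring a little care is the interplay between "we want a bound in terms of $\lambda_k$ only" and the appearance of $|w_k|$ on the right of \eqref{amgm}: the resolution in both parts is simply that $|w_k| \to 0$ (via Theorem~\ref{Thm zero}), so its Cesàro average is negligible. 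I expect the main obstacle to be purely bookkeeping — getting the chain of elementary inequalities from Corollary~\ref{corSchwarz} down to the clean product bound $\prod(\lambda_k+|w_{k-1}|)$ in the sharpest usable form, and then organizing the $\varepsilon$'s in part (a) so that the final exponent base lands strictly below $1$ and matches the "$c \in (a,1)$" in the statement.
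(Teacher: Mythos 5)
Your proposal is correct and follows essentially the same route as the paper: iterate Corollary~\ref{corSchwarz}, discard the denominators, apply the AM--GM inequality to obtain \eqref{amgm}, and then invoke Theorem~\ref{Thm zero} to get $|w_n|\to 0$ so that the Ces\`aro average of $|w_k|$ is negligible, which yields (a) and (b). The differences are only cosmetic (the paper keeps the harmless factor $|w_0|$ and deduces both parts at once from $|w_n|^{1/n}\le a+o(1)$ as $n\to\infty$), and the clean ordering of quantifiers you promise for part (a) is exactly what is needed.
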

\begin{proof}
By using Corollary~\ref{corSchwarz}, and then applying the fact that the geometric mean   of $n$ positive numbers is at most equal to their arithmetic mean, we see that, for $w_0 \in \D$ and $n \in \N$,

\begin{eqnarray*}
|w_n| &\leq& |w_0| \left(\frac{\lambda_{n}+|w_{n-1}|}{1+\lambda_{n}|w_{n-1}|}\right)\dots \left(\frac{\lambda_{1}+|w_{0}|}{1+\lambda_{1}|w_{0}|}\right)\\
&\leq& |w_0|((\lambda_{n}+|w_{n-1}|)\dots (\lambda_{1}+|w_{0}|))  \\
&\leq& |w_0| \left(\frac{1}{n}((\lambda_{n}+|w_{n-1}|) + \cdots + (\lambda_{1}+|w_{0}|))\right)^n  \\
&=& |w_0| \left( \frac{1}{n} \sum _{k=1}^{n}\lambda_k+ \frac{1}{n} \sum _{k=0}^{n-1}|w_k|\right)^n.
\end{eqnarray*}
This proves~\eqref{amgm}.

Next, if $\limsup_{n \to \infty} \frac{1}{n} \sum_{k=1}^n \lambda_k = a < 1$, then $\sum_{k=1}^{\infty}(1-\lambda_k) = \infty$ and so it follows from Theorem~\ref{Thm zero} that $w_n \to 0$ as $n \to \infty$ and hence that $\frac{1}{n} \sum _{k=0}^{n-1}|w_k| \to 0$ as $n \to \infty$. So, in this case, it follows from~\eqref{amgm} that
\[
|G_n(w_0)|^{1/n} = |w_n|^{1/n} \leq a + o(1) \; \text{ as } n \to \infty.
\]
The results of parts~(a) and~(b) now follow.
\end{proof}

Theorem~\ref{contractionrate}~(a) and Theorem~\ref{avcont} give uniform upper estimates on the rate that $|G_n(w)|$ tends to~0, in the situation where $\sum_{n=1}^{\infty} (1-\lambda_n)=\infty$. It is natural to ask whether we can demonstrate such a uniform rate if we know the rate at which $|G_n(w)|$ tends to~0 on some subset of $\D$. It is clear that we cannot deduce any uniform rate at which $G_n(w)\to 0$ from information about the behaviour of $G_n$ at a {\it single} point $w_0\in\D$. However, if we have an upper bound for $|G_n(w)|$ on some circle $\{w:|w|=r_0\}$, where $0<r_0<1$, then we can obtain an upper estimate for $|G_n(w)|$ for all $w\in \D$ by applying the following simple proposition.

\begin{prop}[Hadamard convexity] \label{lem:ST}
Let $f:\D\to\D$ be holomorphic and satisfy
\[
|f(w)|\leq a, \quad \text{for $|w|\leq r_0$,}
\]
where $0 <a\leq r_0<1$. Then,
\[
|f(w)|\leq a^{\frac{\log r }{\log r_0}} \quad \text{for $|w|\leq r$,}
\]
for all $r$ such that $r_0\leq r < 1$.
\end{prop}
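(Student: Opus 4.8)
The plan is to reduce the statement to the classical Hadamard three-circles theorem. First I would observe that the hypothesis says nothing about $f$ other than that it is holomorphic on $\D$ and bounded there (indeed $|f|<1$), so the relevant tool is the maximum principle applied to $\log|f|$, which is subharmonic. Fix $r$ with $r_0 \le r < 1$. On the closed disk $\{|w|\le r\}$ the function $f$ is holomorphic, so $\log|f|$ is subharmonic there and the function
\[
M(s) := \max_{|w|=s} \log|f(w)|, \quad 0 < s \le r,
\]
is a convex function of $\log s$ by Hadamard's three-circles theorem. In particular, for $r_0 \le r < 1$, writing $r$ as lying between $r_0$ and $1$ is not directly useful (we have no bound at $|w|$ near $1$), so instead I would use convexity of $M$ on the interval of $\log$-radii between (a radius close to $0$) and $r_0$, together with the bound $M(r_0) \le \log a$ and the bound $M(s)\le \log a$ for all $s\le r_0$; but the cleanest route is the following.

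Apply Hadamard's three-circles theorem on the annulus with inner radius $s$ (small) and outer radius $r$, evaluated at the intermediate radius $r_0$: convexity of $M$ as a function of $\log s$ gives, for $0<s\le r_0\le r<1$,
\[
M(r_0) \le \frac{\log r - \log r_0}{\log r - \log s}\, M(s) + \frac{\log r_0 - \log s}{\log r - \log s}\, M(r).
\]
Since $|f|<1$ on $\D$ we have $M(s) \le 0$ for every $s$, so the first term on the right is $\le 0$ and hence
\[
M(r_0) \le \frac{\log r_0 - \log s}{\log r - \log s}\, M(r).
\]
Now let $s \to 0^+$: the coefficient $\frac{\log r_0 - \log s}{\log r - \log s} \to 1$, but more usefully, rearranging the inequality before taking the limit and using $M(r_0)\le \log a < 0$ together with $\log s \to -\infty$, one gets in the limit
\[
\log a \ge M(r_0) \ge \frac{\log r_0}{\log r}\, M(r),
\]
where I have used that $\log r_0 - \log s$ and $\log r - \log s$ are both negative multiples of $\log s$ up to bounded terms, so their ratio tends to $1$ — wait, this needs care. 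Let me instead phrase it directly: divide by $\log s$ (negative) in the displayed bound and let $s\to 0$; the ratio $(\log r_0 - \log s)/(\log r - \log s) \to 1$, which would only give $M(r_0)\le M(r)$, too weak.

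The correct normalization is to compare radii $r_0$ and $1$ is impossible, so instead one compares $s$, $r_0$, and (the maximal radius where we have a bound) — but we only have the bound at $r_0$. Hence the right setup is: we know $|f|\le a$ on $\{|w|\le r_0\}$ and $|f|\le 1$ on all of $\D$. Apply three-circles on the annulus between radii $r_0$ and $r$, at... no — we want an estimate at radius $r > r_0$, so $r$ is the outer point and there is no bound there. The resolution: rescale. Consider $g(\zeta) = f(r\zeta)$ for $|\zeta|\le 1$; then $|g|\le a$ for $|\zeta|\le r_0/r$ and $|g|\le 1$ for $|\zeta|< 1$. Three-circles for $g$ on radii $r_0/r$ and $1$ gives, at any intermediate radius $t$,
\[
\max_{|\zeta|=t}\log|g| \le \frac{\log t - \log(r_0/r)}{\log 1 - \log(r_0/r)}\cdot 0 + \frac{\log 1 - \log t}{\log 1 - \log(r_0/r)}\cdot \log a = \frac{-\log t}{-\log(r_0/r)}\log a.
\]
Taking $t\to 1^-$ is the wrong direction; instead note we want the bound at $|w|=r$, i.e.\ $|\zeta|=1$, which is the boundary — again no good. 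So the honest statement must be that the bound $a^{\log r/\log r_0}$ holds and the way to see it is: the function $\log|f(w)| - \frac{\log a}{\log r_0}\log|w|$ is subharmonic on $\{0<|w|<1\}$, equals $\log|f|\le 0$ on $|w|\to 1$ and is $\le 0$ on $|w|=r_0$ (since there $\log|f|\le \log a = \frac{\log a}{\log r_0}\log r_0$); near $w=0$ it stays bounded above because $\log a/\log r_0 > 0$ forces $-\frac{\log a}{\log r_0}\log|w|\to +\infty$... which blows up the wrong way. Replace by: consider $\{r_0 \le |w| \le r\}$ is not the region either.

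Let me state the plan cleanly: I will prove the proposition by applying the Hadamard three-circles theorem to $f$ on the annulus $\{\,|w|\le r\,\}$ regarded with inner control at radius going to $0$, in the form that gives convexity of $\log M(s)$ in $\log s$, and then I will use the two data points ``$M(r_0)\le \log a$'' and ``$M(\rho)\le 0$ for $\rho$ near $1$''. Concretely, for any $\rho$ with $r < \rho < 1$, convexity of $M$ in $\log$-radius on $[r_0,\rho]$ gives $M(r) \le \frac{\log\rho - \log r}{\log\rho - \log r_0}M(r_0) + \frac{\log r - \log r_0}{\log\rho - \log r_0}M(\rho) \le \frac{\log\rho - \log r}{\log\rho - \log r_0}\log a + 0$. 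Letting $\rho\to 1^-$, the coefficient tends to $\frac{-\log r}{-\log r_0} = \frac{\log r}{\log r_0}$, yielding $M(r)\le \frac{\log r}{\log r_0}\log a$, i.e.\ $|f(w)| \le a^{\log r/\log r_0}$ for $|w|\le r$, which is exactly the claim. The main (and only) obstacle is the standard one of making sure the three-circles inequality is invoked with the right ordering of radii; once $r < \rho < 1$ is chosen the limit $\rho\to 1$ closes the argument immediately.
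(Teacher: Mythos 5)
Your final paragraph is correct and is essentially the paper's own proof: both apply Hadamard's three-circles theorem to get convexity of $\log M(s)$ as a function of $\log s$, use the hypothesis $\log M(r_0)\le \log a$ together with the trivial bound $\log M(\rho)\le 0$ as $\rho\to 1^-$, and conclude via the limiting chord that $\log M(r)\le \frac{\log r}{\log r_0}\log a$ for $r_0\le r<1$ (the paper compresses this chord argument into the statement that $\varphi(t)=\log M(e^t)$ is convex, negative and increasing with $\varphi(\log r_0)\le\log a$). The exploratory false starts preceding that paragraph should simply be deleted; the clean argument at the end stands on its own.
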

\begin{proof}
For $0\leq r<1$, let
\[
M(r)=M(r,f):=\sup_{|z|=r}|f(z)|
\]
 denote the maximum modulus function and put
\[
\varphi(t)=\log M(e^t), \quad \text{for $-\infty < t < 0$.}
\]
Then $\varphi$ is convex by Hadamard's Three Circles Theorem \cite[page~172]{Titch39}, negative and increasing, and by hypothesis
$\varphi(\log r_0) \leq \log a$. Hence
\[
\varphi(t)\leq \left( \frac{\log a}{\log r_0} \right) t,  \quad  \text{for } -\infty < t < 0;
\]
that is,
\[
\log M(r) \leq  \left( \frac{\log a}{\log r_0} \right) \log r, \quad \text{for } r_0\leq r < 1,
\]
 and hence
\[
M(r)\leq a^{\frac{\log r }{\log r_0}}, \quad \text{for $r_0\leq r < 1$},
\]
as required.
\end{proof}

\begin{rem*}
In Proposition~\ref{lem:ST}, the circle $\{w : |w| = r_0\}$ can be replaced by any subset of $\D$ of positive logarithmic capacity, using a more delicate argument involving Green potentials in~$\D$. We omit the details.
\end{rem*}

 \section{Contraction trichotomy: Proof of Theorems A and B}\label{sect:Contraction Trichotomy}
\label{sec:1stclassification}
This section is devoted to a classification of simply connected wandering domains based on hyperbolic distances between orbits of points.  More precisely we prove Theorems A and B  and  we also show that lifts of parabolic components are contracting yet not strongly contracting; (see Theorem~\ref{parabolic weakly contracting}).

The proofs are based on the results from Section~2 concerning self maps of the unit disk. We first show how the hyperbolic distances between orbits of points in the wandering domain compare with the distances between related orbits of points in the unit disk. We also compare the hyperbolic distortion along an orbit of a point in the wandering domain with the  derivatives of the related maps of the unit disk.

Let $f$ be a transcendental entire function with a simply connected  wandering domain $U_0$ and let $U_n$ be the Fatou component containing $f^n(U_0)$, for $n \in \N$. Note that each of the domains $U_n$ is simply connected; indeed, if some $U_n$ is multiply connected, then by \cite[Theorem~3.1]{Baker-wd}, all the Fatou components are bounded, so~$f$ is a proper map between Fatou components and the claim follows from the Riemann--Hurwitz formula.   Although $U_n = f^n(U_0)$ if $U_0$ is bounded, this is not necessarily true in the case that $U_0$ is unbounded when $U_n \setminus f^n(U)$ may contain one point; see for example \cite{Herring}.

We prove Theorem A and Theorem B by considering a sequence $(g_n)$ of  holomorphic self maps of the unit disk  associated to $f$ and $U_n$ in the following way.
Fix a point $z_0 \in U_0$ and, for each $n \ge 0$, choose $\varphi_n: U_n \to {\mathbb D}$ to be a  Riemann  map such that $\varphi_n(f^n(z_0)) = 0$. Then, for $n\in \N$, consider the holomorphic maps $g_n:\D\ra\D$ defined as
\[
g_n = \varphi_n \circ  f \circ \varphi_{n-1}^{-1},
\]
and the composite maps $G_n:\D\to \D$  defined as
\[
 G_n =   g_{n}\circ \cdots\circ g_1 =  \varphi_n \circ  f^n \circ \varphi_0^{-1}.
\]
Because of the choice of  normalization for the Riemann maps we have that $g_n(0)=G_n(0)=0$.
This set up is illustrated in Figure~\ref{hbt!}.
Each of the maps $g_n$ and $G_n$ is an inner function, but we do not use this fact in this paper.

 \begin{figure}\label{hbt!}
\begin{center}
\def\svgwidth{0.8\textwidth}
\begingroup%
  \makeatletter%
    \setlength{\unitlength}{\svgwidth}%
  \makeatother%
  \begin{picture}(1,0.50422362)%
    \put(0,0){\includegraphics[width=\unitlength]{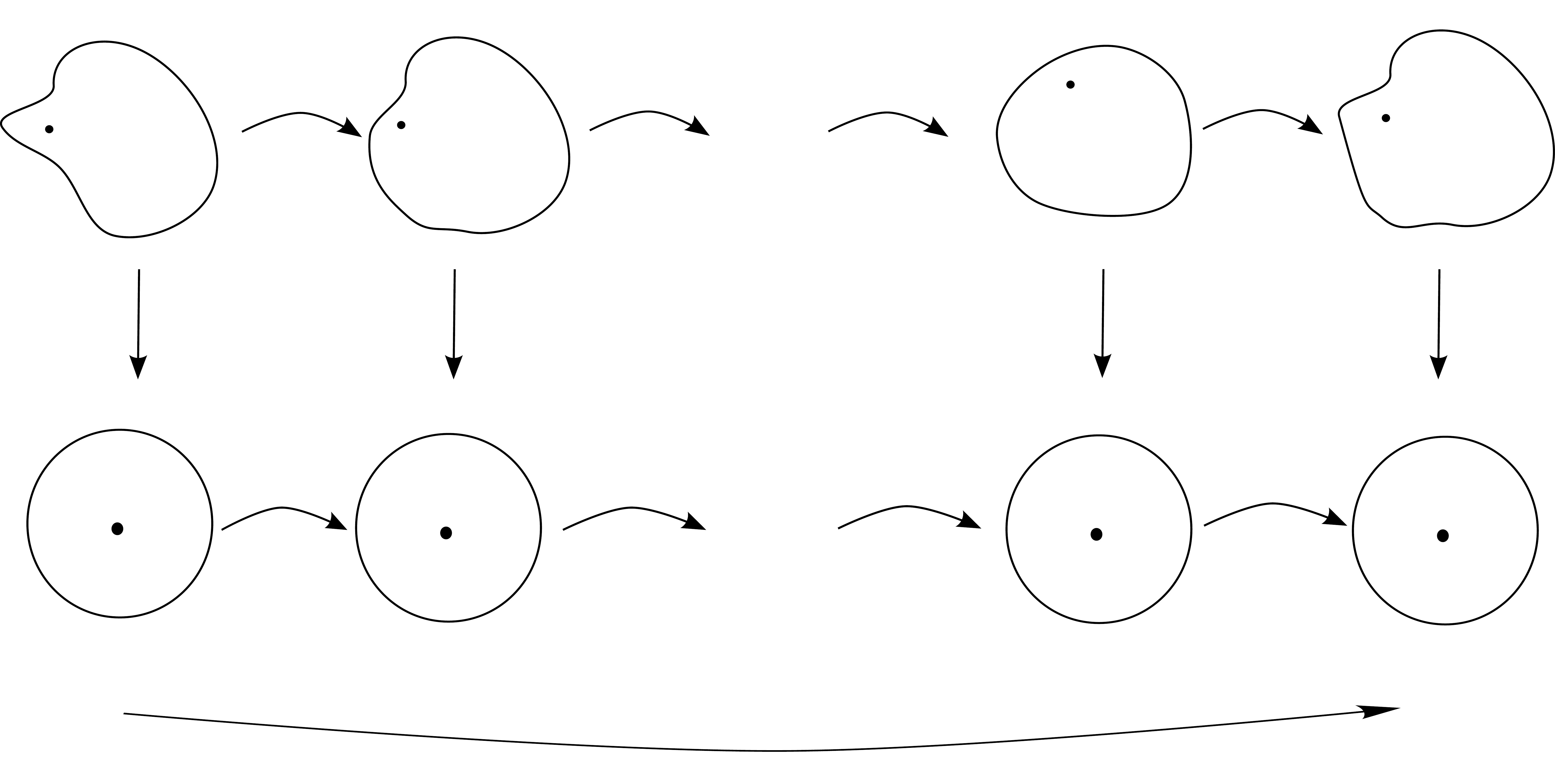}}%
    \put(0.28209737,0.06776929){\color[rgb]{0,0,0}\makebox(0,0)[lb]{\small{$\D$}}}%
    \put(0.30003041,0.16489989){\color[rgb]{0,0,0}\makebox(0,0)[lb]{\small{0}}}%
    \put(0.3,0.28){\color[rgb]{0,0,0}\makebox(0,0)[lb]{\small{$\phi_1$}}}%
    \put(0.71341713,0.28){\color[rgb]{0,0,0}\makebox(0,0)[lb]{\small{$\phi_{n-1}$}}}%
    \put(0.40163958,0.18928622){\color[rgb]{0,0,0}\makebox(0,0)[lb]{\small{$g_2$}}}%
    \put(0.63606945,0.08524224){\color[rgb]{0,0,0}\makebox(0,0)[lb]{\small{ }}}%
    \put(0.53620543,0.26271489){\color[rgb]{0,0,0}\makebox(0,0)[lb]{\small{ }}}%
    \put(0.47,-0.01){\color[rgb]{0,0,0}\makebox(0,0)[lb]{\small{$G_n$}}}%
    \put(0.70038518,0.06687104){\color[rgb]{0,0,0}\makebox(0,0)[lb]{\small{$\D$}}}%
    \put(0.71831822,0.16400164){\color[rgb]{0,0,0}\makebox(0,0)[lb]{\small{0}}}%
    \put(0.48320879,0.1670355){\color[rgb]{0,0,0}\makebox(0,0)[lb]{\small{$\ldots$}}}%
    \put(0.26207261,0.41663462){\color[rgb]{0,0,0}\makebox(0,0)[lb]{\small{$z_1$}}}%
    \put(0.69181647,0.44134599){\color[rgb]{0,0,0}\makebox(0,0)[lb]{\small{$z_{n-1}$}}}%
    \put(0.41031436,0.38555638){\color[rgb]{0,0,0}\makebox(0,0)[lb]{\small{$f$}}}%
    \put(0.48214841,0.41116253){\color[rgb]{0,0,0}\makebox(0,0)[lb]{\small{$\ldots$}}}%
    \put(0.56378393,0.38490117){\color[rgb]{0,0,0}\makebox(0,0)[lb]{\small{$f$}}}%
    \put(0.06245137,0.49488345){\color[rgb]{0,0,0}\makebox(0,0)[lb]{\small{$U_0$}}}%
    \put(0.69443471,0.4920694){\color[rgb]{0,0,0}\makebox(0,0)[lb]{\small{$U_{n-1}$}}}%
    \put(0.07071062,0.07046405){\color[rgb]{0,0,0}\makebox(0,0)[lb]{\small{$\D$}}}%
    \put(0.08864366,0.16759465){\color[rgb]{0,0,0}\makebox(0,0)[lb]{\small{0}}}%
    \put(0.92315163,0.06597279){\color[rgb]{0,0,0}\makebox(0,0)[lb]{\small{$\D$}}}%
    \put(0.94108466,0.16310339){\color[rgb]{0,0,0}\makebox(0,0)[lb]{\small{0}}}%
    \put(0.17797486,0.18928622){\color[rgb]{0,0,0}\makebox(0,0)[lb]{\small{$g_1$}}}%
    \put(0.57859519,0.19018447){\color[rgb]{0,0,0}\makebox(0,0)[lb]{\small{$g_{n-1}$}}}%
    \put(0.8139372,0.19198097){\color[rgb]{0,0,0}\makebox(0,0)[lb]{\small{$g_{n}$}}}%
    \put(0.89534017,0.42112588){\color[rgb]{0,0,0}\makebox(0,0)[lb]{\small{$z_{n}$}}}%
    \put(0.03571314,0.41393987){\color[rgb]{0,0,0}\makebox(0,0)[lb]{\small{$z_0$}}}%
    \put(0.09490254,0.28){\color[rgb]{0,0,0}\makebox(0,0)[lb]{\small{$\phi_0$}}}%
    \put(0.93117506,0.28){\color[rgb]{0,0,0}\makebox(0,0)[lb]{\small{$\phi_{n}$}}}%
    \put(0.80464693,0.38645463){\color[rgb]{0,0,0}\makebox(0,0)[lb]{\small{$f$}}}%
    \put(0.18664964,0.38465813){\color[rgb]{0,0,0}\makebox(0,0)[lb]{\small{$f$}}}%
    \put(0.28521784,0.49667995){\color[rgb]{0,0,0}\makebox(0,0)[lb]{\small{$U_1$}}}%
    \put(0.9127099,0.49835716){\color[rgb]{0,0,0}\makebox(0,0)[lb]{\small{$U_{n}$}}}%
  \end{picture}%
\endgroup%
\end{center}
\caption{\small Self maps of the unit disk arising from an orbit of wandering domains}
\end{figure}

Before stating the next theorem, we recall that if $f:U\ra V$ is a holomorphic map between two hyperbolic domains~$U$ and~$V$, then the hyperbolic distortion of~$f$ at $z$ is defined to be
\[
\|Df(z)\|_U^V:=\lim_{z'\ra z}\frac{\dist_V(f(z'),f(z))}{\dist_U(z',z)}.
\]

\begin{lem}\label{lemUD}
Let $U=U_0$ be a simply connected wandering domain of a transcendental entire function $f$ and let $U_n$ be the Fatou component containing $f^n(U)$, for $n \in \N$. Let $z_0\in U_0$ and let  $g_n,\ G_n$ be as defined above.
\begin{itemize}
\item[\rm(a)] If $z \in U$ and $\phi_0(z) = w$, then
\[
\dist_{U_n}(f^n(z),f^n(z_0)) = \log \left( \frac{1 + |G_n(w)|}{1 - |G_n(w)|}\right), \quad \text{for } n \in \N.
\]

\item[\rm(b)] For each $n \in \N$,
\[
 |g_n'(0)| = \lambda_n(z_0):=\|Df(f^n(z_0))\|_{U_n}^{U_{n+1}}.
\]
\end{itemize}
\end{lem}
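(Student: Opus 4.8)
The statement is a dictionary lemma: it translates the dynamics of $f$ on the orbit of Fatou components $U_0\to U_1\to\cdots$ into the dynamics of the disk self-maps $g_n$, and the proof should be a short direct computation once two standard facts are invoked — every Riemann map $\phi_n\colon U_n\to\D$ is a hyperbolic isometry (so it preserves $\dist$ and has hyperbolic distortion identically $1$), and hyperbolic distortion is multiplicative along compositions. The plan is to first record that each $U_n$ is a hyperbolic domain: it is simply connected (established at the start of this section) and, being a wandering Fatou component of a transcendental entire function, satisfies $U_n\ne\C$; moreover $f^n$ maps $U_0$ into $U_n$, so $f^n(z),f^n(z_0)\in U_n$ and all the objects $\dist_{U_n}$, $\phi_n$, $G_n$ make sense.

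For part (a), since $\phi_n$ is a conformal bijection of $U_n$ onto $\D$ it is an isometry for the hyperbolic metrics, so
\[
\dist_{U_n}(f^n(z),f^n(z_0))=\dist_{\D}\bigl(\phi_n(f^n(z)),\phi_n(f^n(z_0))\bigr).
\]
By the normalization $\phi_n(f^n(z_0))=0$, and since $z=\phi_0^{-1}(w)$ we have $\phi_n(f^n(z))=(\phi_n\circ f^n\circ\phi_0^{-1})(w)=G_n(w)$. Substituting and using the explicit formula~\eqref{hdist0} for the hyperbolic distance to the origin in $\D$ yields exactly $\log\bigl((1+|G_n(w)|)/(1-|G_n(w)|)\bigr)$.

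For part (b), I would first note that because $g_n(0)=0$ and the hyperbolic density of $\D$ at $0$ is the same whether $\D$ is viewed as source or target, the hyperbolic distortion $\|Dg_n(0)\|_{\D}^{\D}$ coincides with the ordinary derivative modulus $|g_n'(0)|$. Writing $g_n=\phi_n\circ f\circ\phi_{n-1}^{-1}$ and applying the chain rule for hyperbolic distortion at $0$, the factors contributed by $\phi_n$ and by $\phi_{n-1}^{-1}$ both equal $1$ since conformal bijections are hyperbolic isometries; hence $|g_n'(0)|$ equals the hyperbolic distortion of $f$ at the corresponding point of the orbit of $z_0$, which is precisely $\lambda_n(z_0)=\|Df(f^n(z_0))\|_{U_n}^{U_{n+1}}$.

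This lemma carries essentially no serious obstacle; the only genuine care needed is in checking that each $U_n$ is a hyperbolic domain (so the Riemann maps and hyperbolic metrics exist) and in correctly tracking, in part (b), which point of the orbit of $z_0$ the hyperbolic distortion of $f$ is evaluated at when unwinding the composition $g_n=\phi_n\circ f\circ\phi_{n-1}^{-1}$.
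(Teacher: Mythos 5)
Your proof is correct and takes essentially the same approach as the paper: part (a) is the same isometry-plus-formula argument, and part (b) uses the same conformal-invariance (conjugation) step, only replacing the paper's explicit limit computation of $\|Dg_n(0)\|_{\D}^{\D}$ via the hyperbolic distance formula and a Taylor expansion of the logarithm by the equivalent observation that, since $g_n(0)=0$, the distortion is the density ratio $\rho_{\D}(0)|g_n'(0)|/\rho_{\D}(0)=|g_n'(0)|$. The indexing subtlety you flag (strictly, $g_n=\phi_n\circ f\circ\phi_{n-1}^{-1}$ evaluates $f$ along the orbit one step earlier than the displayed formula suggests) is handled tacitly in the paper in exactly the same way, so there is no gap relative to the published argument.
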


\begin{proof}

\begin{itemize}
\item[\rm(a)]
Let $n\in\N$. Since  $G_n = \phi_n \circ f^n \circ \phi_0^{-1}$  and $\phi_n$ is conformal,  if $z \in U$ and $\phi_0(z) = w$  then
\[
\dist_{U_n}(f^n(z),f^n(z_0)) = \dist_{\D}(G_n(w),G_n(0)) = \dist_{\D}(G_n(w),0) = \log \left( \frac{1 + |G_n(w)|}{1 - |G_n(w)|}\right),
\]
where the last equality follows from~\eqref{hdist0}.
\item[\rm(b)]
Let $n\in\N$. Since $g_n = \phi_n \circ f \circ \phi_{n-1}^{-1}$ and $\phi_n$ is conformal  we have
\[
\|Dg_n(0)\|_\D^\D=\|Df(f^n(z_0))\|_{U_n}^{U_{n+1}}=\lambda_n(z_0).
\]
Since $g_n(0) = 0$,  it follows from~\eqref{hdist0} that
\begin{eqnarray*}
\|Dg_n(0)\|_\D^\D & = & \lim_{w\to 0}\frac{\dist_\D(g_n(w), g_n(0))}{\dist_\D(w,0)} = \lim_{w\to 0}\frac{\dist_\D(g_n(w), 0)}{\dist_\D(w,0)}\\
& = & \lim_{w \to 0} \frac{\log \left( \frac{1 + |g_n(w)|}{1 - |g_n(w)|}\right)}{\log \left( \frac{1 + |w|}{1 - |w|}\right)}\\
& = & \lim_{w \to 0} \frac{2|g_n(w)|}{2|w|} = |g_n'(0)|,
\end{eqnarray*}
by using the Taylor expansion for the logarithm.\qedhere
\end{itemize}
\end{proof}

\subsection{Proof of Theorem A}
\label{sec:proofofthmA}

We now use the results of Section 2 together with Lemma~\ref{lemUD} to prove Theorem A, that is the  classification of simply connected wandering domains according to the  behaviour of the hyperbolic distances between orbits of points.

Let $U=U_0$ be a simply connected wandering domain of a transcendental entire function $f$ and let $U_n$ be the Fatou component containing $f^n(U)$, for $n \in \N$. Also, let
\[
E=\{(z,z')\in U\times U : f^k(z)=f^k(z') \text{\ for some $k\in\N$}\}.
\]

Let $z_0\in U_0$ and let $\phi_n, g_n,G_n$ be as defined in the beginning of this section.

First we suppose that there exists $z_0' \in U_0$ with
\begin{equation}\label{eqcont}
(z_0',z_0) \notin E \quad \text{and } \dist_{U_n}(f^n(z_0'), f^n(z_0))\lran 0.
\end{equation}

Let  $w_0' = \phi_0(z_0')$. By \eqref{eqcont} and Lemma~\ref{lemUD}~(a), we have that $G_n(w_0') \lran 0$ and $G_n(w_0') \neq 0$  for all $n\in\N$. Hence
by Theorem~\ref{Thm zero}~(b) we have that $\sum_{n=1}^{\infty}(1-\lambda_n) = \infty$, where $\lambda_n = |g_n'(0)|$, and therefore that  $G_n(w)\lran 0$, for all $w \in \D$, by Theorem~\ref{Thm zero}~(a). By Lemma~\ref{lemUD}~(a) again, $\dist_{U_n}(f^n(z), f^n(z_0))\lran 0$, for all $z \in U_0$. We conclude that $\dist_{U_n}(f^n(z), f^n(z'))\lran 0$, for all $z,z' \in U_0$, by the triangle inequality, which is case~(1).

We have shown that~\eqref{eqcont} implies that $\sum_{n=1}^{\infty}(1-\lambda_n) = \infty$ and that this implies that $U_0$ is contracting. Thus $U_0$ is contracting if and only if $\sum_{n=1}^{\infty}(1-\lambda_n) = \infty$, where
\[
\lambda_n = |g_n'(0)| = \|Df(f^n(z_0))\|_{U_n}^{U_{n+1}}=\lambda_n(z_0),\;\text{ for }n\in\N,
\]
by Lemma~\ref{lemUD}~(b).

Now suppose that there exist $z,z' \in U_0$ and $N \in \N$ with
\begin{equation}\label{eqiso}
\dist_{U_n} (f^n(z),f^n(z')) = c(z,z') > 0, \quad \text{ for all } n \geq N.
\end{equation}
Then, by Schwarz's Lemma, $f:U_n \to U_{n+1}$ is an isometry, for all $n \geq N$,  and so for every pair $z,z'\in U_0$ we have that
\[
\dist_{U_n} (f^n(z),f^n(z')) = \dist_{U_N} (f^N(z),f^N(z')), \quad \text{for } n \ge N.
\]
Thus, if $(z,z') \in (U \times U) \setminus E$ we have that   $\dist_{U_n} (f^n(z),f^n(z')) = c(z,z') > 0$ for all $n \geq N$ and that  $U_0$ is eventually isometric. In this case, $\lambda_n(z) = 1$ for all $z\in \D$ and for $n \geq N$, by Schwarz's Lemma, which is case~(3).

Finally, we show that case~(2) is the only other possibility. It follows from the above proof that, if there exists $z_0' \in U_0$ for which neither~\eqref{eqcont} nor~\eqref{eqiso} holds, then the only possibility is that neither of these conditions hold for {\it any} $z \in U_0$; that is, $U_0$ is semi-contracting, which is case~(2). This completes the proof of Theorem A.


\subsection{Subclassification of contracting wandering domains: Proof of Theorem~B}\label{sec:subclassification}
In this subsection we prove Theorem B, which gives sufficient conditions for a simply connected wandering domain to be strongly contracting or super-contracting. We prove parts~(a) and~(b) by using the results of Section~2 together with Lemma~\ref{lemUD}.

Let $U_0$ be a simply connected wandering domain of a transcendental entire function $f$ and let $U_n$ denote the Fatou component containing $f^n(U)$, for $n \in \N$. Let $z_0\in U_0$ and let $g_n,G_n$ be as defined in the beginning of this section.

Also, for $n \in \N$, we let $\lambda_n = \|Df(f^n(z_0))\|_{U_n}^{U_{n+1}} $ and note from Lemma~\ref{lemUD}~(b) that $\lambda_n = |g_n'(0)|$.

To show (a), observe that if $\lim \sup_{n \to \infty} \frac{1}{n}\sum_{k=1}^n \lambda_k = a < 1$, then it follows from Theorem~\ref{avcont} (a) that
\[
|G_n(w)| = O(c^n) \;\text{ as }n\to\infty, \quad \text{for } w \in \D, \; c \in (a,1).
\]
So, by Lemma~\ref{lemUD} (a), if we take $z \in U_0$ and put $w = \phi_0(z)$, then
\[
\dist_{U_n}(f^n(z), f^n(z_0)) = O(c^n)\;\text{ as }n\to\infty, \quad \text{for } c\in (a,1).
\]
This proves part~(a) of Theorem B.

To prove part~(b), we note that, if $\lim_{n \to \infty} \frac{1}{n}\sum_{k=1}^n \lambda_k = 0$, then, from Theorem~\ref{avcont}~(b),
\[
\left(\dist_{U_n}(f^n(z), f^n(z_0))\right)^{1/n} \to 0 \quad \text{as $n\to\infty$},
\]
and hence $U_0$ is super-contracting.

To prove part~(c) we need to show that, for $n \in \N$, $z \in U_0$,
\begin{equation}\label{equal}
\lim \sup_{n \to \infty} \frac{1}{n}\sum_{k=1}^n \lambda_k(z) = \lim \sup_{n \to \infty} \frac{1}{n}\sum_{k=1}^n \lambda_k, \quad \text{for } z \in U_0.
\end{equation}
(Recall that $\lambda_k = \lambda_k(z_0)$.) We begin by supposing that $\lim \sup_{n \to \infty} \frac{1}{n}\sum_{k=1}^n \lambda_k = a < 1$ and fix $c \in (a,1)$ and $z \in U_0$. From part (a) above, there exists $C > 0$ such that
\begin{equation}\label{cbound}
\dist_{U_k}(f^k(z), f^k(z_0)) \leq C c^k, \quad \text{for } k \in \N.
\end{equation}
We now use the following result of Beardon and Minda to obtain a bound on the difference between  $\lambda_k(z)$ and $\lambda_k$.

\begin{lem}[{\cite[Theorem 11.2]{BeardonMinda}\label{lBM}}]
Let $U,V$ be hyperbolic domains and let $f:U\to V$ be holomorphic. Then
\[
\dist_{\D}(\|Df(z)\|_U^V, \|Df(w)\|_U^V) \leq 2 \dist_{U}(z,w), \quad \text{for all } z,w \in U.
\]
\end{lem}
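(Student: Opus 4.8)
The plan is to reduce the inequality, via universal covers and a M\"obius normalisation, to the case of a holomorphic self map $F$ of $\D$ fixing the origin, and then to prove it by chaining together two applications of the Schwarz--Pick lemma through a carefully chosen intermediate point.

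\textbf{Reduction.} Let $\pi\colon\D\to U$ and $\rho\colon\D\to V$ be universal covering maps with $\pi(0)=z$ and $\rho(0)=f(z)$. Since $\D$ is simply connected, $f\circ\pi$ lifts through $\rho$ to a holomorphic $F\colon\D\to\D$ with $\rho\circ F=f\circ\pi$, and the lift may be chosen so that $F(0)=0$. Covering maps are local isometries for the hyperbolic metric, so $\|D\pi\|\equiv\|D\rho\|\equiv 1$; together with the chain rule for the hyperbolic distortion (immediate from Definition~\ref{HDdef}) this gives $\|DF(\zeta)\|_{\D}^{\D}=\|Df(\pi(\zeta))\|_{U}^{V}$ for all $\zeta\in\D$. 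Lifting a hyperbolic geodesic of $U$ from $z$ to $w$ produces $\eta\in\pi^{-1}(w)$ with $\dist_{\D}(0,\eta)=\dist_U(z,w)$, and then $\|DF(0)\|=\|Df(z)\|$ and $\|DF(\eta)\|=\|Df(w)\|$. Hence it suffices to prove $\dist_{\D}(\|DF(0)\|,\|DF(\eta)\|)\le 2\dist_{\D}(0,\eta)$ for every holomorphic $F\colon\D\to\D$ with $F(0)=0$ and every $\eta\in\D$. If $F$ is a M\"obius automorphism of $\D$ then $\|DF\|\equiv 1$ and the inequality is trivial, so we may assume $F$ is not an automorphism.

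\textbf{Key construction.} Set $F^h(\zeta):=(1-|\zeta|^2)F'(\zeta)/(1-|F(\zeta)|^2)$, so that $|F^h(\zeta)|=\|DF(\zeta)\|$ and $F^h(0)=F'(0)$. Define $g_0(\zeta):=F(\zeta)/\zeta$ (with $g_0(0)=F'(0)$) and, for $w\in\D\setminus\{0\}$,
\[
g_w(\zeta):=\frac{F(\zeta)-F(w)}{1-\overline{F(w)}F(\zeta)}\cdot\frac{1-\bar w\zeta}{\zeta-w},
\]
extended holomorphically across $\zeta=w$ with $g_w(w)=F^h(w)$. By the Schwarz--Pick lemma, $|g_0|\le 1$ and $|g_w|\le 1$ on $\D$; since $F$ is not an automorphism, the maximum principle forces $g_0$ and $g_w$ to map $\D$ into $\D$. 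The decisive point is the identity $g_w(0)=F(w)/w=g_0(w)$, which follows by evaluating the defining formula at $\zeta=0$ and using $F(0)=0$.

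\textbf{Conclusion and main difficulty.} Applying the Schwarz--Pick lemma to $g_0$ at $0,w$ and to $g_w$ at $w,0$ gives
\[
\dist_{\D}\!\big(F'(0),F(w)/w\big)\le\dist_{\D}(0,w)\quad\text{and}\quad\dist_{\D}\!\big(F^h(w),F(w)/w\big)\le\dist_{\D}(0,w).
\]
The map $\zeta\mapsto|\zeta|$ is a contraction of $(\D,\dist_{\D})$: writing $p=|a|$, $q=|b|$, the squared pseudohyperbolic distance $(p^2+q^2-2\Re(\bar b a))/(1+p^2q^2-2\Re(\bar b a))$ is a decreasing function of $\Re(\bar b a)\in[-pq,pq]$, hence is at least its value $(p-q)^2/(1-pq)^2$ at $\Re(\bar b a)=pq$, which corresponds to the pair $|a|,|b|$. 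Combining this with the two displayed estimates and the triangle inequality for $\dist_{\D}$ yields $\dist_{\D}(\|DF(0)\|,\|DF(w)\|)\le 2\dist_{\D}(0,w)$, as required; taking $F(\zeta)=\zeta^2$ shows the constant $2$ is sharp. The only step requiring a genuine idea is recognising that $F(w)/w$ equals both $g_0(w)$ and $g_w(0)$, so that the two Schwarz--Pick inequalities can be chained; the reduction to $\D$ and the modulus estimate are routine.
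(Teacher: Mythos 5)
Your proof is correct. Note that the paper itself contains no argument for this lemma: it is quoted verbatim from Beardon and Minda (their Theorem 11.2), so there is no internal proof to compare against. What you have written is in substance a self-contained reconstruction of the Beardon--Minda argument: your $g_0$ and $g_w$ are their hyperbolic difference quotients, and chaining two Schwarz--Pick applications through the common value $g_0(w)=g_w(0)=F(w)/w$ is exactly how the Lipschitz constant $2$ arises. Your normalisation $F(0)=0$ (legitimate after passing to universal covers, and the covering step correctly transfers the statement from the disk to arbitrary hyperbolic $U,V$, since covering maps have hyperbolic distortion identically $1$ and lift geodesics isometrically) slightly streamlines the usual setup, and the remaining ingredients are all supplied and verified: the removable singularity at $\zeta=w$ with $g_w(w)=F^h(w)$, the maximum-principle argument showing $g_0,g_w$ map into the open disk once automorphisms are excluded, and the fact that $\zeta\mapsto|\zeta|$ is a $\dist_{\D}$-contraction, which is needed to pass from the hyperbolic derivative $F^h$ to the distortion $\|DF\|$ and which you prove correctly by the monotonicity of the squared pseudohyperbolic distance in $\Re(\bar b a)$. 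The excluded case (a conformal automorphism, or more generally a covering, where $\|Df\|\equiv 1$ and the left-hand side is interpreted as $0$) is indeed trivial as you say, and your sharpness example $F(\zeta)=\zeta^2$ in fact yields equality for every $w$, since $\dist_{\D}\bigl(0,2|w|/(1+|w|^2)\bigr)=2\dist_{\D}(0,w)$, which is a reassuring confirmation that the constant $2$ cannot be improved.
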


It follows from Lemma~\ref{lBM} together with~\eqref{cbound} that, under our supposition,
\[
\dist_{\D}(\lambda_k(z), \lambda_k) \leq 2C c^k, \quad \text{for } k \in \N.
\]
Since
\[
\dist_{\D}(\lambda_k(z), \lambda_k) = \left|\int_{\lambda_k}^{\lambda_k(z)} \frac{2\,dt}{1-t^2}\right| \geq \left|\int_{\lambda_k}^{\lambda_k(z)} 2\,dt\right| = 2|\lambda_k(z) - \lambda_k|,
\]
it follows that
\[
|\lambda_k(z) - \lambda_k| \leq C c^k, \quad \text{for } k \in \N.
\]

So, if $\lim \sup_{n \to \infty} \frac{1}{n}\sum_{k=1}^n \lambda_k = a < 1$, then
\begin{eqnarray*}
\lim \sup_{n \to \infty} \frac{1}{n}\sum_{k=1}^n \lambda_k(z)& \leq & \lim \sup_{n \to \infty} \frac{1}{n}\sum_{k=1}^n \lambda_k + \lim \sup_{n \to \infty} \frac{C}{n}\sum_{k=1}^n c^k\\
& = & \lim \sup_{n \to \infty} \frac{1}{n}\sum_{k=1}^n \lambda_k = a.
\end{eqnarray*}
Since the roles of $z_0$ and $z$ are interchangeable, we have shown that~\eqref{equal} holds whenever $\lim \sup_{n \to \infty} \frac{1}{n}\sum_{k=1}^n \lambda_k < 1$. The only remaining case is that
\[
\lim \sup_{n \to \infty} \frac{1}{n}\sum_{k=1}^n \lambda_k(z) = 1 = \lim \sup_{n \to \infty} \frac{1}{n}\sum_{k=1}^n \lambda_k.
\]
This completes the proof of Theorem B.

\subsection{Rate of contraction in parabolic components}\label{parabolic}

It is clear that if a wandering domain $U$ is the lift of an attracting component $V$, then $U$ is strongly contracting and, if $V$ is super-attracting, then $U$ is super-contracting. We end this section by showing that if a wandering domain $U$ occurs as a lift of a parabolic component, then $U$ is contracting but not strongly contracting. We need the following lemma; see \cite[Lemma p. 157]{Shapiro}, for example.
\begin{lem}
\label{lem:shapiro}
If $G$ is a simply connected domain, not the whole complex plane, then for $z,w \in G$,
$$\operatorname{dist}_G(z,w)\geq \frac{1}{2}\log \left(1+ \frac{|z-w
|}{\min\{\operatorname{dist}(z,\partial G), \operatorname{dist}(w,\partial G)\}}\right).$$
\end{lem}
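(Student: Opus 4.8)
The plan is to reduce the estimate to the standard lower bound for the hyperbolic density of a simply connected domain, namely
\[
\rho_G(\zeta) \geq \frac{1}{2\,\dist(\zeta,\partial G)}, \quad \text{for } \zeta \in G,
\]
and then simply integrate this along an arbitrary arc joining $z$ to $w$.

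First I would establish the density estimate, which is where simple connectedness (and $G\neq\C$) enters essentially. Fix $\zeta\in G$ and let $\phi:\D\to G$ be a Riemann map with $\phi(0)=\zeta$. By conformal invariance of the hyperbolic metric and~\eqref{hdist0} we have $\rho_G(\zeta)=\rho_{\D}(0)/|\phi'(0)|=2/|\phi'(0)|$. The Koebe one-quarter theorem applied to $\phi$ gives $G\supset D\bigl(\zeta,\tfrac14|\phi'(0)|\bigr)$, hence $|\phi'(0)|\leq 4\,\dist(\zeta,\partial G)$, and the claimed lower bound for $\rho_G(\zeta)$ follows.

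Next comes the integration. Let $\gamma:[0,L]\to G$ be any piecewise-smooth arc from $z$ to $w$, parametrised by Euclidean arc length, so that $L\geq|z-w|$ and $|\gamma(t)-z|\leq t$ for all $t\in[0,L]$. Since $\zeta\mapsto\dist(\zeta,\partial G)$ is $1$-Lipschitz, writing $\delta:=\dist(z,\partial G)$ we get $\dist(\gamma(t),\partial G)\leq\delta+t$. Combining this with the density estimate,
\[
\length_G(\gamma)=\int_0^L\rho_G(\gamma(t))\,dt\geq\int_0^L\frac{dt}{2(\delta+t)}=\frac12\log\frac{\delta+L}{\delta}\geq\frac12\log\Bigl(1+\frac{|z-w|}{\delta}\Bigr).
\]
Taking the infimum over all such arcs (which computes $\dist_G(z,w)$) yields $\dist_G(z,w)\geq\frac12\log\bigl(1+|z-w|/\dist(z,\partial G)\bigr)$, and exchanging the roles of $z$ and $w$ gives the same with $\dist(w,\partial G)$ in the denominator. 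Since $x\mapsto\log(1+|z-w|/x)$ is decreasing, the stronger of the two inequalities is the one with the smaller denominator, which is exactly the asserted bound with $\min\{\dist(z,\partial G),\dist(w,\partial G)\}$.

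The only step needing genuine care is the density lower bound, relying on the Koebe one-quarter theorem; everything afterwards is an elementary one-dimensional integral. It is also worth recording, for the passage to the infimum, that $\dist_G$ equals the infimum of hyperbolic lengths over piecewise-smooth (hence rectifiable) arcs, so having the inequality for such arcs suffices.
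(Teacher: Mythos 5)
Your proof is correct: the Koebe one-quarter theorem gives the density bound $\rho_G(\zeta)\ge 1/\bigl(2\operatorname{dist}(\zeta,\partial G)\bigr)$ in the normalization $\rho_{\D}(0)=2$ used here, and the integration along an arc-length parametrised curve, using that $\zeta\mapsto\operatorname{dist}(\zeta,\partial G)$ is $1$-Lipschitz together with $L\ge|z-w|$, correctly yields the bound at $z$, with the passage to the minimum via symmetry and monotonicity of $x\mapsto\log\bigl(1+|z-w|/x\bigr)$ handled properly. The paper itself does not prove this lemma but cites Shapiro, where essentially this same standard argument (Koebe density estimate plus integration along a curve) appears, so your write-up matches that approach and has the added merit of being self-contained.
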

We have the following general result about the contraction rate in a parabolic component. The estimates in this result may be known but we are not aware of a reference.

\begin{thm}\label{parabolic weakly contracting}
Let~$V$ be an invariant parabolic component of a transcendental entire function~$f$. Then, for all $z_0, z_0' \in V$, either $f^m(z_0)=f^m(z'_0)$ for some $m\in\N$ or there exist positive constants~$k$ and~$K$ depending on $z_0$, $z'_0$ and~$p$, the number of petals, such that
\begin{equation}\label{kandK}
\frac{k}{n} \le \operatorname{dist}_V(f^n(z_0),f^n(z'_0))\le \frac{K}{n},\;\text{ for }n\in \N.
\end{equation}
\end{thm}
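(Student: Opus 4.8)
The plan is to reduce the statement to a concrete estimate in a standard model for a parabolic basin, via the Fatou coordinate, and then to compare the hyperbolic metric of the (truncated) model with explicit computations. First I would recall that near the parabolic fixed point (which, after a local change of coordinate, we may place at $0$ with multiplier $1$ and leading behaviour $f(z) = z - z^{p+1} + \cdots$, where $p$ is the number of attracting petals), each attracting petal $P$ admits a Fatou coordinate $\Phi : P \to \mathbb{C}$ conjugating $f|_P$ to the translation $w \mapsto w+1$ on a right half-plane $\{\operatorname{Re} w > R\}$. An orbit $z_n = f^n(z_0)$ eventually enters some petal, so without loss of generality $\Phi(z_n) = w_0 + n$ for large $n$, and similarly $\Phi(z_n') = w_0' + n$ (for the orbit to be in the same petal; if the two orbits start in different petals or one is preperiodic to the other they must eventually coincide by the hypothesis, or the distance stays bounded below by a constant, which only helps the lower bound and I would need to check the upper bound separately — see the obstacle paragraph). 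Since $\Phi$ conjugates $f$ to a translation on a half-plane $H = \{\operatorname{Re} w > R\}$, and a half-plane is biholomorphic to $\mathbb{D}$, we get
\[
\operatorname{dist}_{P}(z_n, z_n') \le \operatorname{dist}_{H}(w_0+n, w_0'+n),
\]
while on the other side, since $U_n = f^n(V) \subset V$ and $V$ is contained in the basin, I would use that $f^n(V)$ shrinks toward the fixed point along the petal so that, for large $n$, $\operatorname{dist}_{U_n} \ge \operatorname{dist}_{W}$ for a suitable domain $W$ (e.g.\ a slightly larger petal, or a half-plane-type region) whose hyperbolic metric I can estimate from below.

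The core computation is then the two-sided estimate of the hyperbolic distance between the points $w_0 + n$ and $w_0' + n$ in a half-plane (or strip-like) domain as $n \to \infty$. In the right half-plane $H$ the hyperbolic density at $w$ is comparable to $1/\operatorname{Re} w$; since $\operatorname{Re}(w_0+n) \asymp n$ and $|(w_0+n) - (w_0'+n)| = |w_0 - w_0'|$ is a fixed constant, integrating the density along the segment joining the two points gives
\[
\operatorname{dist}_H(w_0+n, w_0'+n) \asymp \frac{|w_0 - w_0'|}{n} = O\!\left(\frac{1}{n}\right),
\]
which yields the upper bound in \eqref{kandK}. For the lower bound I would invoke Lemma~\ref{lem:shapiro}: with $G$ a simply connected domain containing the orbit points and comparable in scale to $\operatorname{dist}(z_n, \partial V) \asymp 1/n^{1/p}$ (the Euclidean scale of the petal at ``time'' $n$), and with $|z_n - z_n'| \asymp 1/n^{1+1/p}$ (the image under $\Phi^{-1}$, whose derivative is of size $\asymp 1/n^{1+1/p}$, of a fixed-size separation in Fatou coordinates), the lemma gives
\[
\operatorname{dist}_V(z_n, z_n') \ge \tfrac{1}{2}\log\!\left(1 + \frac{|z_n - z_n'|}{\operatorname{dist}(z_n,\partial V)}\right) \asymp \frac{|z_n - z_n'|}{\operatorname{dist}(z_n,\partial V)} \asymp \frac{1}{n},
\]
using $\log(1+x) \asymp x$ for small $x$. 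Combining the two gives constants $k, K$ depending on $z_0, z_0', p$ as claimed.

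The main obstacle, and where care is needed, is the transcendental (non-local) part: the Fatou coordinate only controls $f$ in a neighbourhood of the parabolic point, whereas the hyperbolic metric of $V$ depends on the global geometry of $V$, which for a transcendental $f$ may be wild (unbounded, with complicated boundary). To handle the \emph{upper} bound one only needs a subdomain of $V$ on which $f$ is well understood — the petal with its Fatou coordinate — and monotonicity of the hyperbolic metric under inclusion does the rest, so that direction is safe. The \emph{lower} bound is the delicate one: I must ensure that $\operatorname{dist}(z_n, \partial V)$ is genuinely comparable to the petal scale $n^{-1/p}$ and not much larger (which would weaken the bound in the wrong direction), i.e.\ that $\partial V$ comes close to the orbit at the expected rate. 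This requires knowing that the orbit $z_n$ approaches $\partial V$ — indeed, a parabolic basin boundary passes through the parabolic point, so $\operatorname{dist}(z_n, \partial V) \le |z_n - 0| \asymp n^{-1/p}$, giving exactly the needed upper control on the denominator in Shapiro's lemma; this is the key observation that makes the lower bound work. I would also need the asymptotics $|z_n| \asymp n^{-1/p}$ and $|(\Phi^{-1})'(w)| \asymp (\operatorname{Re} w)^{-(p+1)/p}$ for the parabolic normal form, which are classical (Leau--Fatou flower theorem computations), together with a brief argument that two orbits either eventually merge, or eventually lie in a common petal, or remain a bounded hyperbolic distance apart (the last case being easily seen to also obey \eqref{kandK} on the upper side once one notes the orbits converge and the local picture applies).
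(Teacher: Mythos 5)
Your two main estimates are sound, and they run close to the paper's own argument: the lower bound is exactly the paper's (Lemma~\ref{lem:shapiro} applied with $\operatorname{dist}(z_n,\partial V)\le |z_n|\asymp n^{-1/p}$, because the parabolic point lies on $\partial V$, together with $|z_n-z_n'|\asymp n^{-1-1/p}$), while for the upper bound you use the inclusion $P\subset V$ and the exact half-plane model $\Phi(P)$, where the two orbits are translates at fixed separation $|w_0-w_0'|$ and real part $\asymp n$, so $\operatorname{dist}_V\le\operatorname{dist}_P\le C/n$. The paper instead obtains the asymptotics $|z_n|\sim(np)^{-1/p}$ and $|z_n-z_n'|\sim c\,n^{-1-1/p}$ from the Abel coordinate $w=z^{-p}$ (via $g^n(w)=np+\tfrac{A}{p}\log n+u_n(w)$, following Beardon), and proves the upper bound by integrating the density estimate $\rho_V\le 2/\operatorname{dist}(\cdot,\partial V)$ along the Euclidean segment joining $z_n$ to $z_n'$, using that this segment lies in the invariant petal whose opening angle $2\pi/p$ gives $\operatorname{dist}(z_n,\partial V)\gtrsim|z_n|$. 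Your half-plane route is, if anything, cleaner for the upper bound; you would just need to record that $w_0\ne w_0'$ (injectivity of the Fatou coordinate plus the hypothesis that the orbits never merge) and a Koebe-distortion step turning the fixed separation in Fatou coordinates into $|z_n-z_n'|\asymp |(\Phi^{-1})'|\,|w_0-w_0'|\asymp n^{-1-1/p}$; both are routine.

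The genuine gap is your treatment of the case in which the two orbits do not eventually lie in a common petal. Your fallback claim, that if the orbits ``remain a bounded hyperbolic distance apart'' the estimate \eqref{kandK} still holds ``on the upper side'', is false as stated: if $f^n(z_0)$ and $f^n(z_0')$ approached $0$ through different petals, then $|z_n-z_n'|\asymp n^{-1/p}\asymp\operatorname{dist}(z_n,\partial V)$, so Lemma~\ref{lem:shapiro} bounds $\operatorname{dist}_V(f^n(z_0),f^n(z_0'))$ away from $0$, and the upper bound $K/n$ fails. So this case must be excluded, not accommodated. The correct resolution, which the paper invokes implicitly when it asserts that $V$ is one of $p$ distinct invariant components each containing a single invariant petal, is the standard fact that an invariant parabolic Fatou component contains exactly one attracting petal: the set of points of $V$ whose orbits converge to the fixed point along a given attracting direction is open and closed in the connected set $V$, so all orbits in $V$ share one attracting direction and hence eventually enter the same petal. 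You need to state and justify (or cite) this fact; once it is in place, the problematic case never arises and your argument goes through.
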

\begin{proof}
Without loss of generality we assume that $0$ is the parabolic fixed point in $\partial V$ and let~$p$ be the number of petals of $f$ at 0. The following proof uses detailed estimates from the discussion of Abel's functional equation in \cite[pages~110--122]{Beardon} and we start by summarising this discussion, mainly using the notation from \cite{Beardon}.

First, the function~$f$ is conformally conjugate near 0 to a function of the form
\[
F(z)=z-z^{p+1}+O(z^{2p+1}) \quad\text{as }z \to 0.
\]
Substituting $w=z^{-p}$, $z=w^{-1/p}$, where  $w^{-1/p}$ denotes the principal root, we obtain
\[
g(w)=1/\left(F(w^{-1/p})\right)^p = w+p+A/w+O(1/w^{1+1/p}) \quad\text{as }w \to \infty,
\]
for some constant $A$, from which it follows that there exists a parabola-shaped domain of the form  $\Pi=\{u+iv:v^2>4K(K-u)\}$, $K>0$, that is forward invariant under~$g$. For $w\in \Pi$, we have
\begin{equation}\label{un}
g^n(w) = np+\frac{A}{p}\log n+u_n(w), \quad\text{for } n\in \N,
\end{equation}
where the functions $u_n$ are holomorphic in $\Pi$ and converge locally uniformly on $\Pi$ to a univalent function $u$, which satisfies  $u(g(w))=u(w)+p$, a form of Abel's functional equation.

Now suppose that $w_0, w'_0\in \Pi$ are distinct points and put $z_0 = w_0^{-1/p}$ and $z'_0 = (w'_0)^{-1/p}$. Then $z_0$ and $z'_0$ lie in the invariant petal-shaped domain for $F$ at~0, which corresponds to $\Pi$ under the mapping $w\mapsto w^{-1/p}$ and which is symmetric with respect to the positive real axis, subtending an angle of $2\pi/p$ at~0. It follows from the above properties of the functions $u_n$ and the univalence of $u$ that
\begin{equation}\label{uw0uw'0}
g^n(w'_0)-g^n(w_0) = u_n(w'_0)-u_n(w_0) \to u(w'_0)-u(w_0) \ne 0 \quad\text{as } n\to \infty.
\end{equation}
On substituting $z=w^{-1/p}$ we find that $z_n=F^n(z_0)$ and $z'_n= F^n(z'_0)$ both approach~0 tangentially to the positive real axis through the petal-shaped domain mentioned above. Moreover, by \eqref{un},
\begin{equation}\label{eq:parabolic1}
|z_n|=|g^n(w_0)|^{-1/p}=\frac{1}{|np+\frac Ap\log n+u_n(w_0)|^{1/p}}\sim \frac{1}{(np)^{1/p}} \;\text{ as}\;n\to \infty,
\end{equation}
and
\begin{equation}\label{eq:parabolic2}
|z'_n|=|g^n(w'_0)|^{-1/p}=\frac{1}{|np+\frac Ap\log n+u_n(w'_0)|^{1/p}}\sim \frac{1}{(np)^{1/p}} \;\text{ as}\;n\to \infty.
\end{equation}
Also, by \eqref{un}, \eqref{uw0uw'0} and a short calculation,
\begin{equation}
\label{eq:parabolic3}
|z_n-z_n'|=|g^n(w_0)^{-1/p}-g^n(w'_0)^{-1/p}|\sim \frac{|u(w_0)-u(w'_0)|}{p(np)^{1+1/p}}\quad\text{as}\;n\to \infty.
\end{equation}
Since $F$ is conformally conjugate to $f$ near 0, we deduce that these estimates for $z_n$ and $z'_n$ hold if $z_n=f^n(z_0)$ and $z'_n=f^n(z'_0)$, for $n \in \N$, where $z_0$ and $z'_0$ are redefined to be the corresponding points in the invariant parabolic component $V$ for $f$. Also, note that $V$ is one of~$p$ distinct invariant parabolic components for $f$ at 0, each containing an invariant petal-shaped domain subtending an angle of $2\pi/p$ at~0.

Therefore, by Lemma \ref{lem:shapiro}, (\ref{eq:parabolic1}), (\ref{eq:parabolic2}) and (\ref{eq:parabolic3}), together with the fact that $\operatorname{dist}(z_n,\partial V)\le |z_n|$, we have
\begin{eqnarray}
\operatorname{dist}_{V}(z_n,z_n')&\geq& \frac{1}{2} \log \left(1+ \frac{|z_n-z_n'|}{\min\{\operatorname{dist}(z_n,\partial V), \operatorname{dist}(z_n',\partial V)\}}\right) \nonumber\\
&\geq& k \frac{1/n^{1+1/p}}{1/n^{1/p}}=\frac{k}{n},\;\text{ for } n\in \N,\nonumber
\end{eqnarray}
for some positive constant~$k$ depending on $z_0$, $z'_0$ and $p$.

Finally, for $n \in \N$, let $\gamma_n$ denote the line segment joining $z_n$ to $z_n'$. Then, in view of the fact that $z_n$ and $z'_n$ approach 0 tangentially to the positive real axis, the line segment $\gamma_n$, for~$n$ sufficiently large, lies in the invariant petal-shaped domain in $V$, which is conformally equivalent near 0 to the petal-shaped domain obtained from $\Pi$. Also, for~$n$ sufficiently large, we have
\[
\min\{\operatorname{dist}(z_n,\partial V), \operatorname{dist}(z_n',\partial V)\} \ge \frac12 \sin (\pi/p) |z_n|.
\]
Therefore, by the standard hyperbolic density estimate in a simply connected domain (see,  for example,~\cite[page 13]{Carleson-Gamelin}), (\ref{eq:parabolic1}), (\ref{eq:parabolic2}), and the triangle inequality, we have
\begin{eqnarray}
\operatorname{dist}_{V}(z_n,z_n')
&\leq& \int_{\gamma_n} \rho_{V}(z)\,|dz|  \nonumber\\
&\leq& \int_{\gamma_n} \frac{2}{\operatorname{dist}(z,\partial V)}\,|dz|  \nonumber\\
&\leq& \frac{2|z_n-z_n'|}{\min\{\operatorname{dist}(z_n,\partial V), \operatorname{dist}(z_n',\partial V)\}-\tfrac12|z_n-z'_n|}\nonumber\\
&\leq& K \frac{1/n^{1+1/p}}{1/n^{1/p}}=\frac{K}{n},\nonumber
\end{eqnarray}
for some positive constant $K$ depending on $z_0$, $z'_0$ and $p$, and $n$ sufficiently large.

Finally, we note that, for all pairs of points $z_0, z'_0 \in V$ with disjoint orbits, we have $f^n(z_0), f^n(z'_0)\in V$ for $n$ sufficiently large. This completes the proof.
\end{proof}
\begin{rem*} \; Using a more careful analysis of the size of the hyperbolic  density in~$V$ near the points $z_n$ and $z'_n$ we can show that the estimate \eqref{kandK} in Theorem~\ref{parabolic weakly contracting} can be replaced by
\[
 \operatorname{dist}_V(f^n(z_0),f^n(z'_0))\sim \frac{c}{n}\;\text{ as }n\to \infty,
 \]
for some positive constant~$c$ depending on $z_0$, $z'_0$ and~$p$. The proof uses results about the behaviour of  a Riemann map from a sector of angle $2\pi/p$ onto~$V$ which maps~0 to~0, justified by using standard results about angular derivative of conformal mappings at boundary points.
\end{rem*}

By conformality and Definition~\ref{def:strongly contracting}, we have the following corollary of Theorem~\ref{parabolic weakly contracting}.

\begin{cor}\label{weak}
If $U$ is a simply connected wandering domain that is the lift of an invariant parabolic component $V$ and $U_n$ is the Fatou component containing $f^n(U)$, $n\in \N$. Then either $f^m(z_0)=f^m(z'_0)$ for some $m\in\N$ or there exists positive constants~$k$ and~$K$ depending on $z_0$ and $z'_0$ such that
\[
\frac{k}{n} \leq \operatorname{dist}_{U_n}(z_n,z_n')\leq \frac{K}{n}, \quad \text{for } n \in \N.
\]
In particular,~$U$ is contracting but not strongly contracting.
\end{cor}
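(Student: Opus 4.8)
The plan is to reduce the corollary to Theorem~\ref{parabolic weakly contracting} via the covering map that underlies the lifting construction. Recall that saying $U$ is the lift of an invariant parabolic component $V$ means that there is a transcendental entire function $F$ having $V$ as an invariant parabolic component, together with the exponential covering $\pi=\exp:\C\to\C\setminus\{0\}$ satisfying $\pi\circ f=F\circ\pi$, and such that each $U_n$ is a connected component of $\pi^{-1}(V)$. Write $z_n=f^n(z_0)$, $z_n'=f^n(z_0')$ and $\pi_n:=\pi|_{U_n}$. Since the restriction of a covering to a component of the preimage of a connected open set is again a (surjective) covering, $\pi_n:U_n\to V$ is a holomorphic covering map. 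The strategy is: first show $\pi_n$ is a conformal isomorphism, hence a hyperbolic isometry; then push the dynamics down to $V$; then quote Theorem~\ref{parabolic weakly contracting}; and finally read off ``contracting but not strongly contracting'' from the definitions.

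For the first, key, step I would argue that $V$, being a periodic — indeed invariant — Fatou component of the transcendental entire function $F$, is simply connected, since multiply connected Fatou components of transcendental entire functions are always wandering domains (the argument via \cite[Theorem~3.1]{Baker-wd} recalled earlier). A holomorphic covering onto a simply connected domain has trivial monodromy and is therefore a biholomorphism, so $\pi_n:U_n\to V$ is conformal and hence preserves hyperbolic distance:
\[
\operatorname{dist}_{U_n}(a,b)=\operatorname{dist}_V\big(\pi_n(a),\pi_n(b)\big),\qquad a,b\in U_n.
\]
I expect the only genuine obstacle to be making rigorous the claim implicit above that $U_n$ coincides with a connected component of $\pi^{-1}(V)$, rather than with a strictly larger Fatou component — and hence that $\pi_n$ is a covering at all. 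This should follow from the identity $J(f)=\pi^{-1}(J(F))$ for the exponential semiconjugacy: $\pi^{-1}(V)$ then lies in the Fatou set of $f$, so any boundary point of a component $W$ of $\pi^{-1}(V)$ that still lies in the Fatou set of $f$ must already lie in $\pi^{-1}(V)$ and hence in $W$; combined with the fact that a connected open subset of the Fatou set lies in a single Fatou component, this forces $U_n=W$.

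Once the isometry is in hand, the rest is a direct translation. Put $w_0=\pi_0(z_0)$ and $w_0'=\pi_0(z_0')$, so $w_0,w_0'\in V$; from $\pi\circ f=F\circ\pi$ we get $\pi_n(z_n)=F^n(w_0)$ and $\pi_n(z_n')=F^n(w_0')$, whence
\[
\operatorname{dist}_{U_n}(z_n,z_n')=\operatorname{dist}_V\big(F^n(w_0),F^n(w_0')\big),\qquad n\in\N,
\]
and, $\pi$ being injective on $U_n$, we have $f^m(z_0)=f^m(z_0')$ for some $m\in\N$ exactly when $F^m(w_0)=F^m(w_0')$. Applying Theorem~\ref{parabolic weakly contracting} to the pair $w_0,w_0'\in V$ then gives precisely the stated dichotomy: either $f^m(z_0)=f^m(z_0')$ for some $m$, or there are positive constants $k,K$ (depending on $w_0,w_0'$ and the number of petals, hence on $z_0,z_0'$) with $k/n\le\operatorname{dist}_{U_n}(z_n,z_n')\le K/n$ for all $n\in\N$. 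Finally, taking $z_0,z_0'\in U$ with disjoint orbits (such pairs exist since $U$ is infinite), the upper bound gives $\operatorname{dist}_{U_n}(z_n,z_n')\le K/n\to0$, and then, via the triangle inequality and the trivial case of colliding orbits, $\operatorname{dist}_{U_n}(f^n(z),f^n(z'))\to0$ for every $z,z'\in U$, so $U$ is contracting in the sense of Theorem~A. If $U$ were strongly contracting, Definition~\ref{def:strongly contracting} would give $c\in(0,1)$ with $\operatorname{dist}_{U_n}(z_n,z_n')=O(c^n)$, which with the lower bound forces $k\le Cnc^n$ for all large $n$ and some constant $C$, impossible since $nc^n\to0$. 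Hence $U$ is contracting but not strongly contracting.
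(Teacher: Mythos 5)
Your proposal is correct and follows essentially the route the paper intends: the paper dispatches this corollary with the single remark ``by conformality and Definition~\ref{def:strongly contracting}'', i.e.\ the exponential restricted to each $U_n$ is a conformal isomorphism onto $V$, so hyperbolic distances transfer and Theorem~\ref{parabolic weakly contracting} applies verbatim. You merely make explicit the steps the paper leaves implicit (using $J(f)=\pi^{-1}(J(F))$ to identify the $U_n$ with components of $\pi^{-1}(V)$, and simple connectivity of $V$ to upgrade the covering to a biholomorphism), and these details are sound.
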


Here are two examples of simply connected wandering domains, obtained by lifting parabolic components, which are contracting but not strongly contracting.

\begin{ex}
 Consider the entire functions
\[
f(z)=z+e^{-z}+2\pi i, \quad g(z)=z+e^{-z} \quad \text{and}\quad F(w)=we^{-w}.
\]
Then both~$f$ and~$g$ are obtained by lifting~$F$ under the exponential function $w=e^{-z}$. Since  $F$ has an invariant parabolic component associated with the fixed point at 0, the function~$g$ has congruent unbounded invariant Baker domains $U_n$, $n\in \Z$, such that $U_n\subset \{z:(2n-1)\pi<\Im(z)<(2n+1)\pi\}$; see~\cite{Dom98,FagHen}. Since $J(f)=J(g)$, by~\cite{Berg95}, the components $U_n$ form a sequence of simply connected wandering domains which, by Corollary~\ref{weak}, are contracting but not strongly contracting.
\end{ex}

\begin{figure}[hbt!]
\fboxsep=0.5pt
\begin{center}
\framebox{\includegraphics[width=0.45\textwidth]{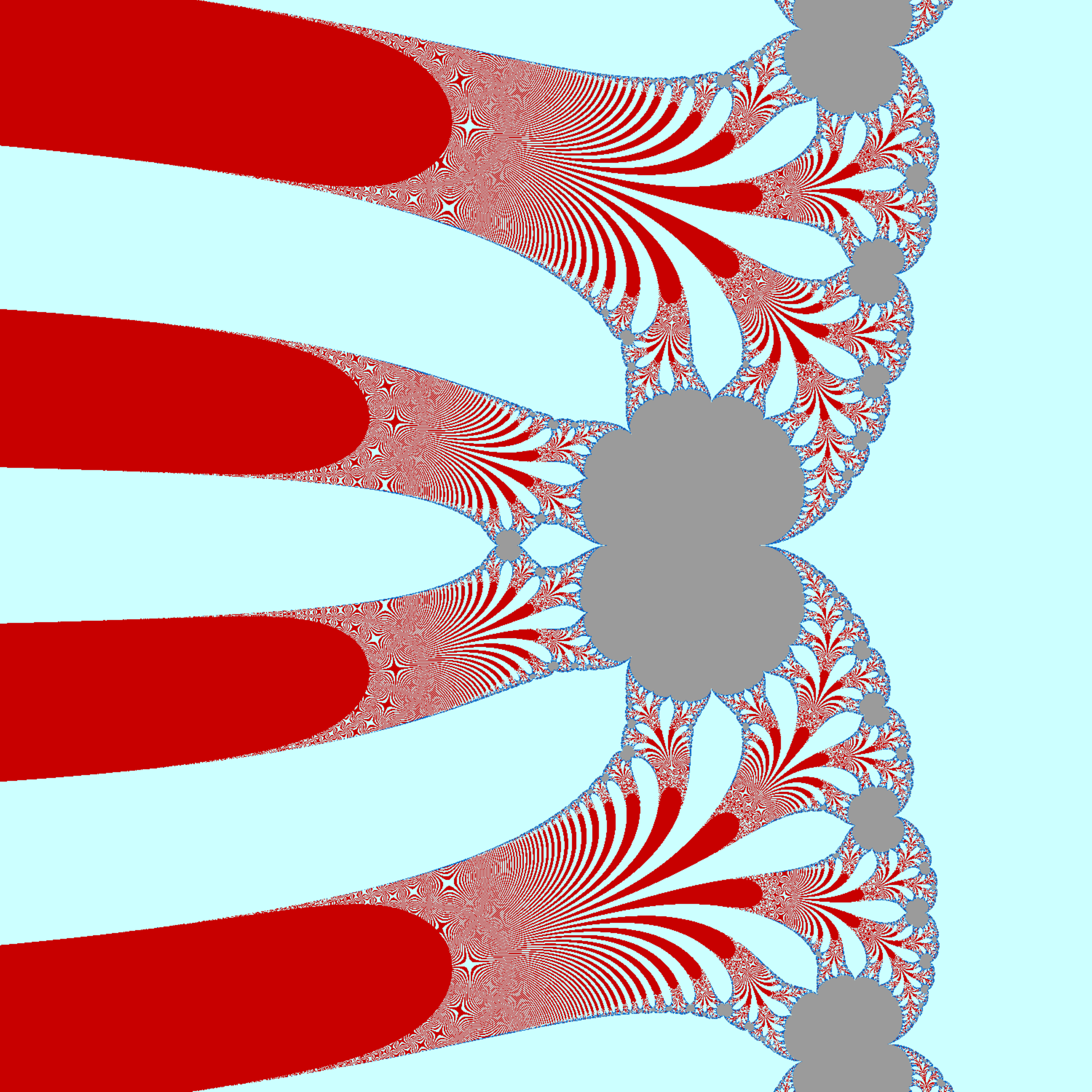}}
\framebox{\includegraphics[width=0.45\textwidth]{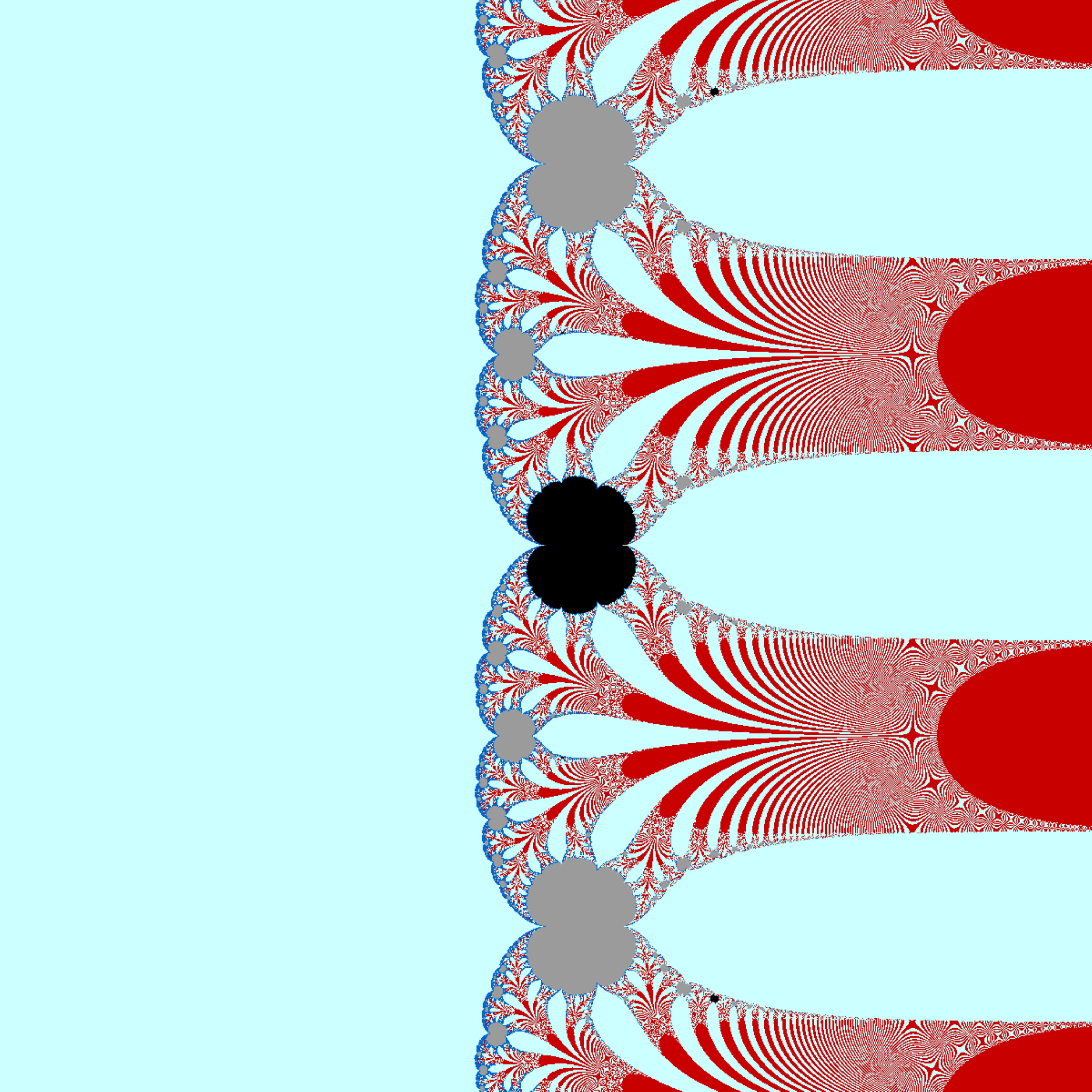}}
\end{center}
\caption{\label{figparab} \small Left:  Dynamical plane of $F$ in Example \ref{exparab}. The  super-attracting basin of  $w=0$ is shown in light blue, while in gray we see the parabolic basin of $w=1$. Right: Dynamical plane of $f$. In blue the Baker domain (lift of the superattracting basin). In black the parabolic invariant basin at $z=0$. In gray the wandering domains.  The range is $[-9,9]\times[-9,9]$.}
\end{figure}

\begin{ex}\label{exparab}
As another example, consider
\[
f(z)=2z + 1 - e^{z} \quad \text{and}\quad F(w)=e\,w^2e^{-w},
\]
which belongs to the same family as the Example in Figure 1,  both closely related to an example of Bergweiler \cite{bergweilerexample}. In this case, $f$ is a lift of~$F$ under $w=e^{z}$, and $F$ has an invariant parabolic component associated with the fixed point at~1 which lifts to congruent, bounded, simply connected Fatou components, $V_n$, $n\in \Z$, say, of~$f$ such that $0\in \partial V_0$ and
\[
V_n=V_0+2n\pi i,\;\text{ for } n\in \Z,\quad \text{and}\quad f(V_n)=V_{2n},\;\text{ for } n\in \Z.
\]
From this it follows that $V_{2n}$,  $n\ge 1$, is a sequence of bounded, escaping, simply connected wandering domains which, by Corollary~\ref{weak}, are contracting but not strongly contracting.
\end{ex}

\section{Convergence to the boundary: Proof of Theorem C}
\label{sec:convergence}

In this section we give the proof of Theorem C, the classification of simply connected wandering domains in terms of whether orbits of points converge to the boundary. Recall that the Euclidean distance of a point $z$ from the boundary of a hyperbolic domain $U$ is closely related to the hyperbolic density $\rho_U(z)$ of the point in the domain. Indeed, if $U_0$ is a simply connected wandering domain of a transcendental entire function $f$ and $U_n$ is the Fatou component containing $f^n(U_0)$, and $z \in  U_0$, then
\[
\deucl(f^n(z), \partial U_n) \to 0 \iff \rho_{U_n}(f^n(z)) \to \infty.
\]
We prove Theorem C by considering the hyperbolic densities $\rho_{U_n}(f^n(z))$. In fact, we show that a trichotomy as in Theorem C occurs if we consider the quantities $a_n\rho_{U_n}(f^n(z))$, for {\it any} sequence $a_n$ and not just for $a_n = 1$. As we mentioned in the introduction, the issue of convergence to the boundary is somehow delicate in that it is tightly connected to the shape of the wandering domains, and there may be situations where it is more appropriate to use an alternative definition  involving different sequences $a_n$. For example, if the domains $U_n$ are shrinking then it may make sense to say that $z_n$ converges to the boundary if $a_n \rho_{U_n}(f^n(z)) \to \infty$ as $n \to \infty$ where
$$a_n=\sup_D \{\operatorname{diam}D :  D\; \text{is a disk contained in}\;U_n\}.$$

In order to prove Theorem C we need the following lemma, which can be thought of as a Harnack inequality for hyperbolic density in a simply connected domain; see \cite[Lemma~6.2]{BC08} for a similar  type of result (with a different proof) for hyperbolic density in the unit disk.
\begin{lem}[Estimate of hyperbolic quantities]
\label{lem:hyp density}
Let $U \subset \mathbb{C}$ be a simply connected domain. Then, for all $z,z'\in U$,
\[
\exp(-2\dist_U(z,z'))\le \frac{\rho_U(z')}{\rho_U(z)} \le \exp(2\dist_U(z,z')).
\]
\end{lem}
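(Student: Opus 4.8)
The plan is to reduce the statement to a standard comparison between the hyperbolic density of a simply connected domain and the Euclidean distance to its boundary, and then to track how that distance changes under a hyperbolic displacement. Recall the Koebe-type estimate that for a simply connected domain $U \subsetneq \C$ one has
\[
\frac{1}{2\,\dist(z,\partial U)} \le \rho_U(z) \le \frac{2}{\dist(z,\partial U)}, \quad \text{for } z \in U,
\]
which follows from the Koebe $1/4$-theorem and the Schwarz--Pick Lemma applied to a Riemann map (see, for example, \cite[page 13]{Carleson-Gamelin}, already cited in the excerpt). So it suffices to control the ratio $\dist(z,\partial U)/\dist(z',\partial U)$, and the cleanest route is to work directly with the hyperbolic metric along a geodesic from $z$ to $z'$.

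First I would fix $z, z' \in U$ and, by the symmetry of the claimed inequality under swapping $z$ and $z'$, it is enough to prove the upper bound $\rho_U(z')/\rho_U(z) \le \exp(2\dist_U(z,z'))$. Let $t \mapsto \gamma(t)$, $t \in [0, \ell]$, be the hyperbolic geodesic from $z$ to $z'$ parametrised by hyperbolic arclength, so $\ell = \dist_U(z,z')$ and $\rho_U(\gamma(t))\,|\gamma'(t)| \equiv 1$. The key step is to show that $s(t) := \log \rho_U(\gamma(t))$ is Lipschitz in $t$ with constant at most $2$. For this I would use the distance-form estimate above together with the elementary fact that the Euclidean distance to the boundary is $1$-Lipschitz in the Euclidean metric: $|\dist(\gamma(t_2),\partial U) - \dist(\gamma(t_1),\partial U)| \le \leucl(\gamma|_{[t_1,t_2]}) = \int_{t_1}^{t_2} |\gamma'(t)|\,dt = \int_{t_1}^{t_2} \frac{dt}{\rho_U(\gamma(t))} \le \int_{t_1}^{t_2} 2\,\dist(\gamma(t),\partial U)\,dt$. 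Writing $d(t) := \dist(\gamma(t),\partial U)$, this gives $|d'(t)| \le 2 d(t)$ for a.e. $t$, hence $|(\log d(t))'| \le 2$, so $\log d$ is $2$-Lipschitz; combined with $\rho_U(\gamma(t)) \asymp 1/d(t)$ one still only gets $\log \rho_U(\gamma(t))$ Lipschitz up to the additive constants $\log 2$, which is not quite sharp enough.

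To get the clean constant $2$ with no multiplicative loss I would instead invoke the pointwise inequality $|\nabla \log \rho_U| \le 2\rho_U$, valid in any hyperbolic plane domain (this is classical: for simply connected $U$ it follows by differentiating $\rho_U = |(\phi^{-1})'|\,\rho_\D \circ \phi^{-1}$ for a Riemann map $\phi$ and using the corresponding bound $|\nabla \log \rho_\D| \le 2\rho_\D$ in the disk, which is a direct computation). Then $\bigl|\tfrac{d}{dt}\log\rho_U(\gamma(t))\bigr| \le |\nabla\log\rho_U(\gamma(t))|\cdot|\gamma'(t)| \le 2\rho_U(\gamma(t))\cdot|\gamma'(t)| = 2$, so integrating from $0$ to $\ell$ yields $\bigl|\log\rho_U(z') - \log\rho_U(z)\bigr| \le 2\ell = 2\dist_U(z,z')$, which is exactly the assertion. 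The main obstacle is choosing the right intermediate estimate: the crude Koebe bounds introduce a factor of $4$ and fail to give the stated constant, so one really needs the differential inequality $|\nabla\log\rho_U|\le 2\rho_U$ (equivalently, that $\log\rho_U$ is $2$-Lipschitz with respect to $\dist_U$ itself), and the cleanest justification of that is to push it over from the unit disk via the Riemann map, where it is an explicit calculation. Everything else is a one-line integration along a geodesic.
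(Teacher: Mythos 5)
Your overall strategy (integrate a gradient bound on $\log\rho_U$ along a hyperbolic geodesic parametrised by hyperbolic arclength) is sound and, once the key inequality is in place, the final integration step does give exactly the stated lemma. However, the justification you give for that key inequality $|\nabla\log\rho_U|\le 2\rho_U$ has a genuine gap. It does \emph{not} follow "by differentiating $\rho_U=|(\phi^{-1})'|\,\rho_\D\circ\phi^{-1}$ and using the corresponding bound in the disk": writing $\log\rho_U\circ\phi=\log\rho_\D-\log|\phi'|$ and differentiating produces the extra term $\nabla\log|\phi'|$, whose modulus is $|\phi''/\phi'|$, and this pre-Schwarzian is not controlled by any intrinsic disk computation -- it is controlled only because $\phi$ is \emph{univalent}, via the Koebe/Bieberbach distortion estimate $\bigl|\phi''(w)/\phi'(w)-2\bar w/(1-|w|^2)\bigr|\le 4/(1-|w|^2)$. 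With that input the two terms combine to give $\bigl|\partial_w\bigl(\log\rho_\D(w)-\log|\phi'(w)|\bigr)\bigr|\le 2/(1-|w|^2)$, hence $|\nabla\log\rho_U|\le 2\rho_U$, and your proof closes; without it the argument is incomplete. That the property cannot simply be "pushed over" by the Riemann map is visible in the constants: in $\D$ one has the sharp bound $|\nabla\log\rho_\D(w)|=2|w|/(1-|w|^2)\le\rho_\D(w)$ (constant $1$), whereas for $\C\setminus(-\infty,0]$ the constant $2$ is attained (this is the sharpness remark after the lemma), so the inequality is not conformally invariant. Your parenthetical claim that it holds in \emph{any} hyperbolic plane domain is also false: in the punctured disk, near the puncture $|\nabla\log\rho(z)|\sim 1/|z|$ while $\rho(z)\sim 1/\bigl(|z|\log(1/|z|)\bigr)$, so no such bound holds (and correspondingly the lemma itself fails for multiply connected domains); fortunately you only need the simply connected case.

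For comparison, the paper's proof avoids the differential formulation altogether: it takes a Riemann map $\phi$ with $\phi(0)=z$, $\phi(r)=z'$, writes $\dist_U(z,z')=\log\frac{1+r}{1-r}$ and $\rho_U(z)=2/|\phi'(0)|$, $\rho_U(z')=\frac{2}{1-r^2}/|\phi'(r)|$, and applies the integrated Koebe distortion theorem $\frac{1-r}{(1+r)^3}\le|\phi'(r)|/|\phi'(0)|\le\frac{1+r}{(1-r)^3}$ directly. So the underlying input (distortion theory for univalent maps) is the same in both arguments; yours is the infinitesimal version and is a perfectly good alternative once you state and use the pre-Schwarzian estimate explicitly rather than attributing the bound to a disk computation.
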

 \begin{proof}
Let $z,z' \in U$ and let $\phi:\D\ra U$ be a Riemann map with $\phi(0)=z$ and $\phi(r)=z'$, for some $r \in [0,1)$. By conformal invariance of the hyperbolic metric, together with~\eqref{hdist0},
$$
\dist_U(z,z')=\dist_{\D}(0,r)=\log\frac{1+r}{1-r},
$$
and, by the definition of the hyperbolic density on~$U$,
\begin{align*}
\rho_U(z)  &= {\rho_\D(0)}/{|\phi'(0)|}= {2}/{|\phi'(0)|}\\
\rho_U(z')  &=\rho_\D(r)/|\phi'(r)|=\frac{2}{1-r^2}\left/ |\phi'(r)| \right..
\end{align*}

Also, by a standard distortion theorem for conformal maps \cite{Pommerenke},
$$
\frac{1-r}{(1+r)^3}\leq\frac{|\phi'(r)|}{|\phi'(0)|}\leq \frac{1+r}{(1-r)^3}.
$$
Putting everything together we obtain the lower bound,
\[
\frac{\rho_U(z')}{\rho_U(z)}=\frac{1}{1-r^2}\frac{|\phi'(0)|}{|\phi'(r)|}\geq \frac{(1-r)^2}{(1+r)^2} = \exp(-2\dist_U(z,z')),
\]
and the upper bound follows by symmetry.
\end{proof}
{\it Remark} \; It is easy to check that the inequalities in Lemma~\ref{lem:hyp density} are sharp in the case when~$U$ is $\mathbb{C}\setminus (-\infty,0]$ and the points $z,z'$ lie on the positive real axis.

We now prove the main result of this section.

\begin{thm}\label{thm:everybody converges to the boundary or nobody}
Let $U_0$ be a simply connected wandering domain of a transcendental entire function $f$, let $U_n$ be the Fatou component containing $f^n(U)$, for $n \in \N$, and let $(a_n)$ be a real positive sequence.
\begin{itemize}
\item[\rm(a)] If there is a subsequence $n_k\to \infty$ and a point $z\in U_0$ such that $a_{n_k} \rho_{U_{n_k}}(f^{n_k}(z))\ra\infty$, then the same is true for all other points in $U_0$.

\item[\rm(b)]If there is a subsequence $m_k\to \infty$ and a point $z\in U_0$ such that $a_{m_k} \rho_{U_{m_k}}(f^{m_k}(z))$ is bounded, then the same is true for all other points in $U_0$.
\end{itemize}
 \end{thm}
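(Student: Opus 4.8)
The plan is to reduce both statements to a single, clean fact: for any two points $z, z' \in U_0$, the ratio $\rho_{U_n}(f^n(z'))/\rho_{U_n}(f^n(z))$ is controlled by the hyperbolic distance $\dist_{U_n}(f^n(z), f^n(z'))$, and the latter is a \emph{decreasing}, hence \emph{bounded}, sequence in $n$ by the Schwarz--Pick Lemma (as already recalled in the introduction). Concretely, fix $z, z' \in U_0$. Since each $U_n$ is simply connected, Lemma~\ref{lem:hyp density} applied with the pair $(f^n(z), f^n(z'))$ gives
\[
\exp\!\big(-2\dist_{U_n}(f^n(z), f^n(z'))\big) \le \frac{\rho_{U_n}(f^n(z'))}{\rho_{U_n}(f^n(z))} \le \exp\!\big(2\dist_{U_n}(f^n(z), f^n(z'))\big), \quad \text{for } n \in \N.
\]
Now set $C := \dist_{U_0}(z, z') < \infty$. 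Because $f: U_n \to U_{n+1}$ is a holomorphic map between hyperbolic domains, the Schwarz--Pick Lemma yields $\dist_{U_{n+1}}(f^{n+1}(z), f^{n+1}(z')) \le \dist_{U_n}(f^n(z), f^n(z'))$, so $\dist_{U_n}(f^n(z), f^n(z')) \le C$ for all $n \in \N$. Substituting this uniform bound into the displayed inequality gives the key estimate
\[
e^{-2C} \,\rho_{U_n}(f^n(z)) \le \rho_{U_n}(f^n(z')) \le e^{2C}\, \rho_{U_n}(f^n(z)), \quad \text{for all } n \in \N.
\]

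With this in hand both parts are immediate. For part~(a): if $a_{n_k} \rho_{U_{n_k}}(f^{n_k}(z)) \to \infty$ along a subsequence $n_k \to \infty$, then multiplying the left inequality above (with the constant $C = \dist_{U_0}(z, z')$ attached to the pair $(z,z')$) by $a_{n_k} > 0$ gives $a_{n_k}\rho_{U_{n_k}}(f^{n_k}(z')) \ge e^{-2C} a_{n_k} \rho_{U_{n_k}}(f^{n_k}(z)) \to \infty$. For part~(b): if $a_{m_k}\rho_{U_{m_k}}(f^{m_k}(z))$ is bounded above by some $B$ along $m_k \to \infty$, then $a_{m_k}\rho_{U_{m_k}}(f^{m_k}(z')) \le e^{2C} a_{m_k}\rho_{U_{m_k}}(f^{m_k}(z)) \le e^{2C} B$, so it too is bounded. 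In both cases the point $z'$ was arbitrary in $U_0$, which is exactly the claimed conclusion.

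I expect no serious obstacle here; the content is entirely front-loaded into Lemma~\ref{lem:hyp density} and the monotonicity of hyperbolic distance under $f$. The only point requiring a word of care is that Lemma~\ref{lem:hyp density} needs each $U_n$ to be simply connected, which was established at the start of Section~\ref{sect:Contraction Trichotomy} (via \cite[Theorem~3.1]{Baker-wd} and the Riemann--Hurwitz formula), and that the constant $e^{\pm 2C}$ genuinely depends only on the pair $(z, z')$ and not on $n$ — which is precisely what the Schwarz--Pick contraction buys us. One could also phrase the whole argument by noting that $z \mapsto a_n \rho_{U_n}(f^n(z))$, for fixed $n$, is comparable on all of $U_0$ up to a factor depending only on the hyperbolic diameter of $\{z : \dist_{U_0}(z, z_0) \le R\}$, but restricting to a single pair is cleaner for the stated conclusion.
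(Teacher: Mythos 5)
Your proposal is correct and follows essentially the same route as the paper: both rely on the Schwarz--Pick contraction to bound $\dist_{U_n}(f^n(z),f^n(z'))$ by $C=\dist_{U_0}(z,z')$ and then invoke Lemma~\ref{lem:hyp density} to compare the hyperbolic densities up to the factor $e^{\pm 2C}$, from which parts (a) and (b) follow immediately. No gaps; your write-up matches the paper's argument step for step.
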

\begin{proof}
\begin{itemize}
\item[\rm(a)]
Suppose that $a_{n_k} \rho_{U_{n_k}}(f^{n_k}(z))\ra\infty$ as $k \to \infty$ and let $z' \in U_0$ with  $z'\neq z$. By the contraction property of the hyperbolic metric, we have that
$$
\dist_{U_n}(f^{n}(z),f^{n}(z')) \leq\dist_{U_0}( z,z')=:C, \quad \text{ for } n \in \N.
$$
By  Lemma \ref{lem:hyp density},   $\rho_{U_n}(f^n(z')) \geq e^{-2C} \rho_{U_n}(f^n(z))$, for $n \in \N$.
Hence
\[
a_{n_k} \rho_{U_{n_k}}(f^{n_k}(z'))\geq e^{-2C}  a_{n_k} \rho_{U_{n_k}}(f^{n_k}(z))\ra \infty \quad \text{as } k \to \infty
\]

\item[\rm(b)] Now suppose that $a_{m_k} \rho_{U_{m_k}}(f^{m_k}(z)) \leq M$, for $k \in \N$, and let $z' \in U_0$ with  $z'\neq z$. Again, by the contraction property of the hyperbolic metric, we have that
\[
\dist_{U_{m_k}}(f^{m_k}(z),f^{m_k}(z')) \leq\dist_{U_0}( z,z')=:C.
\]
Now applying Lemma   \ref{lem:hyp density} and interchanging $z$ and $z'$, we obtain that
\[
\rho_{U_{m_k}} (f^{m_k}(z')) \leq e^{2C} \rho_{U_{m_k}}(f^{m_k}(z)),
\]
which implies that
\[
a_{m_k} \rho_{U_{m_k}}(f^{m_k}(z'))\leq e^{2C}  a_{m_k} \rho_{U_{m_k}}(f^{m_k}(z)) \leq M e^{2C},
\]
so $a_{m_k} \rho_{U_{m_k}}(f^{m_k}(z'))$ is bounded, for $k \in \N$.
\end{itemize}
\end{proof}

The result of Theorem~C follows from Theorem~\ref{thm:everybody converges to the boundary or nobody}, by taking $a_n = 1$, for $n \in \N$.

\section{Constructing wandering domains}\label{sect:examples}

We begin this section with the proof of Theorem D, which  we then use  together with an extension of Runge's Approximation Theorem to prove  Theorem~\ref{thm:main construction}. This result  enables us to construct bounded simply connected wandering domains in which various different dynamical behaviours can be specified and is the main tool that we use to construct examples in Section~\ref{sect:proof of examples}.

\subsection{Proof of Theorem D}

Let $f$ be a transcendental entire function and let  $\gamma_n$, $\Gamma_n$, $n_k$, $L_k$,   and  $D$ be as in  Theorem~D; see Figure \ref{fig:thmD}.
It follows from properties~(a) and~(b) of Theorem D that
for each $n,m\in\N$ with $n\neq m$ the curve $\gamma_n$ is in $\operatorname{ext}\gamma_m$
and so, by property (c) and Montel's theorem, there exist Fatou components $U_n$ such that
\begin{equation}
\label{eq:intgamma}
\overline{\operatorname{int}\gamma_n} \subset U_n, \;\text{for}\;n \geq 0.
\end{equation}
 Notice that, a priori, the components $U_n$ need not be different from each other. One of our  goals is to show that they are indeed different, by proving that $U_n \subset \operatorname{int}\Gamma_n$, for $n\ge 0$.
\begin{figure}[hbt!]
\begin{tikzpicture}[scale=2.5] 

\draw [blue] plot [smooth cycle] coordinates {(-1.6,0) (-1.4,0.55) (-0.8,0.32)(-0.6,0.35) (-0.4,-0.1) (-0.8,-0.45)(-0.9,-0.35)(-1.2,-0.4)};
\draw [fill,gray!35] plot [smooth cycle] coordinates {(-1.2,0) (-1.3,0.2) (-0.9,0.25)(-0.65,0.2) (-0.7,-0.3)(-0.85,-0.15) (-1.1,-0.2)};
\draw [blue] plot [smooth cycle] coordinates {(-1.2,0) (-1.3,0.2) (-0.9,0.25)(-0.65,0.2) (-0.7,-0.3)(-0.85,-0.15) (-1.1,-0.2)};
\draw [blue] plot [smooth cycle] coordinates {(-3.2,0) (-3,0.6) (-2.3,0.4)(-2,0.45) (-2.1,-0.1) (-2.8,-0.65)};
\draw [fill,gray!35] plot [smooth cycle] coordinates {(-2.9,0) (-2.7,0.2) (-2.5,0.3)(-2.4,0) (-2.5,-0.25) (-2.7,-0.1)};
\draw [blue] plot [smooth cycle] coordinates {(-2.9,0) (-2.7,0.2) (-2.5,0.3)(-2.4,0) (-2.5,-0.25) (-2.7,-0.1)};
\draw [blue] plot [smooth cycle] coordinates {(0.7,0) (0.9,0.35) (1,0.45)(1.3,0.25) (1.5,-0.2) (1.2,-0.45)(1,-0.37)};
\draw [fill,gray!35] plot [smooth cycle] coordinates {(1,0) (1.05,0.3) (1.25,0.15)(1.2,-0.25)(1.1,-0.15) (0.9,-0.2)};
\draw [blue] plot [smooth cycle] coordinates {(1,0) (1.05,0.3) (1.25,0.15)(1.2,-0.25)(1.1,-0.15) (0.9,-0.2)};
\draw [blue] plot [smooth cycle] coordinates {(2,0) (2.2,0.35) (2.3,0.45)(2.7,0.3) (3,-0.25) (2.5,-0.4)(2.2,-0.37)(2.4,-0.3)};
\draw [fill,gray!35] plot [smooth cycle] coordinates {(2.2,0) (2.3,0.2) (2.35,0.25)(2.6,0.1)(2.7,0.2)(2.6,-0.1) (2.4,-0.25)(2.3,0.1)};
\draw [blue] plot [smooth cycle] coordinates {(2.2,0) (2.3,0.2) (2.35,0.25)(2.6,0.1)(2.7,0.2)(2.6,-0.1) (2.4,-0.25)(2.3,0.1)};
\draw [fill,gray!50] plot [smooth cycle] coordinates {(-3,-1.1) (-2.3,-1) (-2,-1.4)(-1.8,-1.65)(-2.5,-1.9)(-2.7,-1.5)};
\draw  plot [smooth cycle] coordinates {(-3,-1.1) (-2.3,-1) (-2,-1.4)(-1.8,-1.65)(-2.5,-1.9)(-2.7,-1.5)};
\draw [red] plot [smooth] coordinates {(-1.7,0.15) (-1.5,0.65) (-1,0.7)(-0.8,0.5)(-0.6,0.6) (-0.3,0.1)(-0.25,-0.2)(-0.6,-0.45)(-0.8,-0.55)(-1.4,-0.7) (-1.7,-0.15)(-1.65,-0.05)};
\draw [red] plot [smooth] coordinates {(0.7,0.15) (0.9,0.55) (1,0.5) (1.3,0.45)(1.6,0.1)(1.4,-0.6)(1.1,-0.5) (0.8,-0.5)(0.6,-0.05)};
\draw plot [smooth] coordinates {(-2.6,-0.05) (-2.4,0.2)(-2.3,0.1)(-2.2,0.2) (-2.1,-0.1)};
\draw plot [smooth] coordinates {(-1,-0.05) (-0.9,-0.1)(-0.8,0.1)(-0.75,-0.1)(-0.9,-0.35)};
\draw plot [smooth] coordinates {(1.2,-0.05) (1.1,-0.1)(1,-0.3)(1.1,-0.35)(1.3,-0.25)(1.2,-0.45)};
\draw plot [smooth] coordinates {(2.6,0.05) (2.4,0.1)(2.5,0.25)(2.7,0.3)};
\draw [->] (-2,0) to [out=40,in=150] (-1.7,0);
\draw [->] (-0.35,0) to [out=40,in=150] (-0.1,0);
\draw [->] (0.3,0) to [out=40,in=150] (0.6,0);
\draw [->] (1.5,0) to [out=36,in=154] (1.92,0);
\node at (-1.87,0.15) {$f$};
\node at (-0.22,0.15) {$f$};
\node at (0.45,0.15) {$f$};
\node at (1.72,0.18) {$f$};
\node at (-2.3,-0.05) {$C_0$};
\node at (-0.95,-0.25) {$C_1$};
\node at (1.35,-0.15) {$C_{n_k}$};
\node at (2.4,0.35) {$C_{n_k+1}$};
\node at (-2.6,0.6) {$\Gamma_0$};
\node at (-1.7,0.8) {$L_0$};
\node at (0.9,0.65) {$L_{k}$};
\node at (-1,0.5) {$\Gamma_1$};
\node at (1.2,0.43) {$\Gamma_{n_k}$};
\node at (2.7,0.5) {$\Gamma_{n_k+1}$};
\node at (-2.9,0.2) {$\gamma_0$};
\node at (-1.35,0.25) {$\gamma_1$};
\node at (0.9,0.1) {$\gamma_{n_k}$};
\node at (2.7,-0.25) {$\gamma_{n_k+1}$};
\node at (-2.4,-1.35) {$D$};
\draw[fill] (0:0) circle [radius=0.01];
\draw[fill] (0:0.1) circle [radius=0.01];
\draw[fill] (0:0.2) circle [radius=0.01];
\draw[fill] (-2.6,-0.05) circle [radius=0.01];
\draw[fill] (-2.1,-0.1) circle [radius=0.01];
\draw[fill] (-1,-0.05) circle [radius=0.01];
\draw[fill] (-0.9,-0.35)  circle [radius=0.01];
\draw[fill] (1.2,-0.05) circle [radius=0.01];
\draw[fill] (1.2,-0.45) circle [radius=0.01];
\draw[fill] (2.6,0.05) circle [radius=0.01];
\draw[fill] (2.7,0.3)  circle [radius=0.01];
\end{tikzpicture}
\caption{\label{fig:thmD}Sketch of the setup of the proof of Theorem D.}
\end{figure}
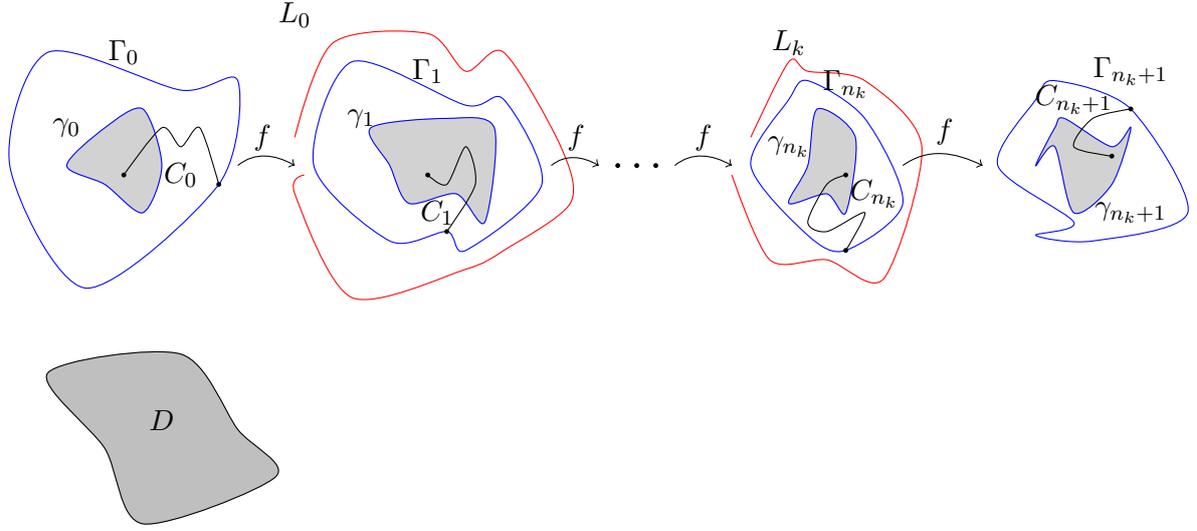

By property~(e), the domain~$D$ must contain an attracting fixed point and so it is contained  in an attracting Fatou component, say~$V$. It then follows by property~(e) that  for all $k \geq 0$ the set  $L_k$ is contained in a  union of Fatou components, $V_k$ say, that maps into $V$.   As above, notice that the $V_k$'s may all be part of the same component.
Since for every $n$ we have that  $\ov{D}\subset \ext{\Gamma_n}$  while $\gamma_n\subset \inter{\Gamma_n}$ 
we deduce that $U_0$ is not in the grand orbit of $V$ and hence that $\bigcup_{n\ge 0} U_n \cap \bigcup_{k\ge 0}V_k=\emptyset$. Therefore
\begin{equation}
\label{eq:thmD}
\operatorname{dist}(z',\partial U_k)<  \delta_k :=\max \{ \operatorname{dist}(z ,L_k) : z \in \Gamma_{n_k}\},\;\text{ for all}\; z' \in \Gamma_{n_k} \cap U_k.
\end{equation}

Note that $U_n$ is simply connected for $n \geq 0$. Indeed, if $U_n$ is multiply connected for some $n \geq 0$, then  it is a wandering domain and by \cite[Theorem 1.2]{BRS} there exists $N>0$ such that $f^k(\operatorname{int}\gamma_n)$ contains an annulus $A(r_k,R_k)$ for all $k\geq N$ with $R_k/r_k \to \infty$ as $k \to \infty$. It follows by property~(c) that $A(r_k,R_k)$ is contained in $\operatorname{int} \gamma_{n+k}$ and this contradicts property~(b). So $U_n$ must be simply connected for $n \geq 0$.

We now show that $U_n \subset \operatorname{int}\Gamma_n$, for $n \geq 0$, using proof by contradiction. If there exists $m \geq 0$ for which $U_m$ is not a subset of $\operatorname{int}\;\Gamma_m$, then
it follows from~\eqref{eq:intgamma} and property (a) that
$U_m \cap \Gamma_m \neq \emptyset$ and so we can take $z_m \in \operatorname{int}\gamma_m$ and $z_m'\in U_m \cap \Gamma_m$, and join  them by a compact curve $C_m \subset (U_m \cap \operatorname{int}\;\Gamma_m)$.

Then, by properties~(c) and~(d), we can choose simple curves $C_n$, $n \geq m$, such that $C_n \subset f^{n-m}(C_m) \subset  (U_n \cap \operatorname{int}\;\Gamma_n)$
and also $C_n$ joins $z_n:=f^{n-m}(z_m) {\in \operatorname{int}\gamma_{n}}$ to a point $z_n' \in \Gamma_n \cap f^{n-m}(C_m) \subset U_n$, while $C_n$ lies in $\overline{\operatorname{int} \Gamma_n}$. Such a curve $C_n$ must also intersect $\gamma_n$. Then, on the one hand, since $C_n \subset f^{n-m}(C_m)$ and $f^{n-m}:U_m \to U_n$ is a hyperbolic contraction, we have that
\begin{equation} \label{eq:curvelength}
\length_{U_n} C_n \leq  \length_{U_n}  f^{n-m}(C_m) \leq \length_{U_m} C_m < \infty,
\end{equation}
for all $n\geq m$. On the other hand, by Lemma \ref{lem:shapiro} and (\ref{eq:thmD}), for $n_k \geq m$, we have
\begin{align*}
\length_{U_{n_k}} C_{n_k}
&\geq \dist_{U_{n_k}} (z_{n_k},z_{n_k}')\\
&\geq \frac{1}{2} \log \left(1+ \frac{|z_{n_k}-z_{n_k}'|}{\min\{\operatorname{dist}(z_{n_k}, \partial U_{n_k}),\operatorname{dist}(z_{n_k}', \partial U_{n_k})\}}\right)\\
& \geq \frac{1}{2} \log \left(1+ \frac{|z_{n_k}-z_{n_k}'|}{\operatorname{dist}(z_{n_k}', \partial U_{n_k})}\right)\\
&\geq \frac{1}{2} \log \left(1+ \frac{\operatorname{dist}(\gamma_{n_k}, \Gamma_{n_k})}{\delta_k}\right).
\end{align*}

By property~(f), this quantity tends to infinity as $k\to\infty$,  which contradicts (\ref{eq:curvelength}), so $U_m \subset \inter \Gamma_m$ and hence $U_m$ is a bounded wandering domain by property~(b).

Finally, suppose that, for some $n \geq 0$, there exists $z_n \in \operatorname{int}\gamma_n$ such that both $f(\gamma_n)$ and $f(\Gamma_n)$ wind $d_n$ times round $f(z_n)$. Since $f(\Gamma_n)$ winds $d_n$ times around $f(z_n)$, we deduce that~$f$ takes the value $f(z_n)$ exactly $d_n$ times in $\operatorname{int}\Gamma_n$. Similarly,~$f$ takes the value $f(z_n)$ exactly $d_n$ times in $\operatorname{int}\gamma_n$. Hence~$f$ takes the value $f(z_n)$ exactly $d_n$ times in $U_n$.
Since $U_n$ is a bounded Fatou component, $f:U_n \to U_{n+1}$ is a proper map; since  the above argument holds for a neighbourhood of $f(z_n)$,  we deduce that the degree of $f$ on $U_n$ is equal to $d_n$.

\subsection{Main construction result}

In the proof of  our main construction result, Theorem~\ref{thm:main construction} below, we
  use the following extension of the main lemma in \cite{pathex}, which is a strong version of the well-known Runge's Approximation Theorem.

\begin{lem}[Approximating on infinitely many compact sets]
\label{thm:Runge's approx}
Let $(E_n)$ be a sequence of compact subsets of $\mathbb{C}$ with the following properties:
\begin{itemize}
\item[(i)] $\mathbb{C}\setminus E_n$ is connected, for $n \geq 0$;
\item[(ii)] $E_n \cap E_m = \emptyset$, for $n \neq m$;
\item[(iii)] $\min \{|z|: z \in E_n\} \to \infty$ as $n \to \infty$.
\end{itemize}
Suppose $\psi$ is holomorphic on   $E = \bigcup_{n=0}^{\infty} E_n$ and $j\in \N$. For $n \geq 0$, let $\varepsilon_n > 0$ and let $z_{n,i} \in E_n$, $1 \leq i \leq j$. Then there exists an entire function~$f$ satisfying, for $n \geq 0$,
\begin{equation}
|f(z)-\psi(z)|<\varepsilon_n, \quad \text{for } z\in E_n;
\end{equation}
\begin{equation}
f(z_{n,i})=\psi(z_{n,i}), \quad f'(z_{n,i}) = \psi'(z_{n,i}),\;\text{ for } 1\le i\le j.
\end{equation}

\end{lem}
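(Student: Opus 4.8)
The plan is to reduce the statement to its interpolation‑free case, which is essentially the approximation lemma of~\cite{pathex}: under hypotheses (i)--(iii), for every function $\psi$ holomorphic on a neighbourhood of $E=\bigcup_n E_n$ and all $\varepsilon_n>0$ there is an entire function $h$ with $|h-\psi|<\varepsilon_n$ on $E_n$ for each $n$. (If one wishes to reprove that case, it is done by an inductive application of Runge's theorem along an exhaustion of $\C$ by compact sets with connected complements; see the last paragraph.) I would then absorb the Hermite data by means of a Weierstrass factor.

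First, $\{z_{n,i}:n\ge0,\ 1\le i\le j\}$ is a discrete subset of $\C$: by (iii) only finitely many $E_n$ meet any bounded disc, and each $E_n$ contributes at most $j$ points. Hence there is an entire function $W$ whose zeros are exactly the points $z_{n,i}$, each of multiplicity $2$ (Weierstrass product theorem); in particular $W(z_{n,i})=W'(z_{n,i})=0$. By the classical interpolation theorem for entire functions (a consequence of the Weierstrass and Mittag--Leffler theorems) there is an entire function $\Lambda$ with $\Lambda(z_{n,i})=\psi(z_{n,i})$ and $\Lambda'(z_{n,i})=\psi'(z_{n,i})$ for all $n,i$. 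Then $\psi-\Lambda$ is holomorphic near $E$ and vanishes to order $\ge2$ at each $z_{n,i}$, while $W$ vanishes there to order exactly $2$ and nowhere else, so
\[
\phi:=\frac{\psi-\Lambda}{W}
\]
has removable singularities at the $z_{n,i}$ and is holomorphic on a neighbourhood of $E$.

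Now apply the interpolation‑free case to $\phi$, with the positive tolerances $\eta_n:=\varepsilon_n/\bigl(1+\sup_{E_n}|W|\bigr)$ (finite because $E_n$ is compact), to obtain an entire function $g$ with $|g-\phi|<\eta_n$ on $E_n$. Put
\[
f:=\Lambda+Wg .
\]
Then $f$ is entire; since $W$ and $W'$ vanish at $z_{n,i}$ we get $f(z_{n,i})=\Lambda(z_{n,i})=\psi(z_{n,i})$ and $f'(z_{n,i})=\Lambda'(z_{n,i})=\psi'(z_{n,i})$; and on $E_n$ we have $f-\psi=Wg-(\psi-\Lambda)=W(g-\phi)$, whence $|f-\psi|\le(\sup_{E_n}|W|)\,\eta_n<\varepsilon_n$. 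Thus $f$ has all the required properties, and the only calculations involved (existence of $W$ and $\Lambda$, removability of the singularities of $\phi$, the final estimate) are routine.

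The genuinely delicate ingredient is therefore the interpolation‑free lemma of~\cite{pathex}, proved by writing $h=\lim_{\ell\to\infty}p_\ell$ as a locally uniform limit of polynomials: at step $\ell$ one applies Runge's theorem on a compact set $K_\ell$ of connected complement consisting of a large disc (on which the approximated function is taken to be $p_{\ell-1}$, to force convergence) together with the ``new'' sets $E_n$ adjoined as separate components (on which it is taken to be $\psi$), the tolerances being chosen summably small so that the total error accrued at all later steps stays below $\varepsilon_n$ on each $E_n$. The hypothesis $\min\{|z|:z\in E_n\}\to\infty$ is used precisely to arrange these nested compact sets so that each $E_n$ is eventually contained in the interior of $K_\ell$ while all complements stay connected; this bookkeeping is the one point that requires care.
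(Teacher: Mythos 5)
Your argument is correct, but it proceeds differently from the paper, which gives no self-contained proof at all: the paper simply invokes the main lemma of \cite{pathex} (which already allows \emph{one} marked point per compact set at which $f$ and $f'$ are prescribed) and asserts that its inductive Runge-type construction ``can easily be modified'' to carry $j$ interpolation conditions per set through each approximation step. You instead treat the cited lemma as a black box in its interpolation-free form and perform the Hermite interpolation externally: a Weierstrass product $W$ with double zeros exactly at the (discrete, by hypothesis (iii)) nodes $z_{n,i}$, a global entire interpolant $\Lambda$ matching $\psi$ and $\psi'$ there, and then $f=\Lambda+Wg$ with $g$ approximating $\phi=(\psi-\Lambda)/W$ to the rescaled tolerances $\varepsilon_n/(1+\sup_{E_n}|W|)$. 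The computation $f-\psi=W(g-\phi)$ on $E_n$ and the double vanishing of $W$ at the nodes then give both conclusions. What your route buys is that the multi-point statement follows formally from the published one-point (indeed zero-point) lemma, with no need to reopen or modify the Eremenko--Lyubich induction; what the paper's route buys is a single construction with no auxiliary global objects $W,\Lambda$. Two trivial points worth a sentence in a write-up: if some $z_{n,i}$ coincide, take the distinct nodes (or adjust multiplicities so that $W$ still vanishes to order exactly $2$ at each point where $\psi-\Lambda$ vanishes to order at least $2$), and recall that ``$\psi$ holomorphic on $E$'' means holomorphic on an open neighbourhood of $E$, which is what makes $\phi$ holomorphic near $E$ after the removable singularities are filled in.
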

The main lemma in \cite{pathex} allows for one point $z_n$ in every compact set at which~$f$ and~$f'$ can be specified, but its proof can easily be modified to hold for finitely many points in every $E_n$, as stated above.\\

The following lemma will also be used in the proof of Theorem \ref{thm:main construction}.

\begin{lem}[Hyperbolic distance on disks]
\label{lem:hyp estimate}
Suppose that $0<s<r<1<R$ and set
\[
c(s,R)= \frac{1-s^2}{R-s^2/R},\quad D_r=D(0,r) \quad\text{and}\quad D_R=D(0,R).
\]
If $|z|,|w|\leq s$, then
\begin{equation}
\label{hyp est 1}
\operatorname{dist}_{D_R}(z,w)= \operatorname{dist}_{\mathbb{D}}({z}/{R},{w}/{R})\geq c(s,R)\operatorname{dist}_{\mathbb{D}}(z,w),
\end{equation}
and
\begin{equation}
\label{hyp est 2}
\operatorname{dist}_{D_r}(z,w)= \operatorname{dist}_{\mathbb{D}}({z}/{r},{w}/{r})\leq \frac{1}{c(s/r,1/r)}\operatorname{dist}_{\mathbb{D}}(z,w).
\end{equation}
Also, $0<c(s,R)<1$ and if the variables~$s$,~$r$ and~$R$ satisfy in addition
\begin{equation}\label{srR}
1-r=o(1-s)\;\text{ as } s\to 1\quad\text{and}\quad R-1=O(1-r)\;\text{ as } r\to 1,
\end{equation}
then
\begin{equation}
\label{cR}
c(s,R)\to 1\;\text{ as}\;s \to 1,
\end{equation}
and
\begin{equation}\label{cr}
c\left(s/r,1/r \right) \to 1\;\text{ as}\;s \to 1.
\end{equation}
\end{lem}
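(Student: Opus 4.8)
The plan is to prove the two displayed identities directly from the conformal invariance of the hyperbolic metric, and then to prove the two inequalities \eqref{hyp est 1} and \eqref{hyp est 2} by comparing hyperbolic densities on concentric disks via a scaling map; the limiting statements \eqref{cR} and \eqref{cr} will then be routine asymptotics. First I would record that the map $z \mapsto z/R$ is a conformal isomorphism from $D_R$ onto $\mathbb{D}$, so $\operatorname{dist}_{D_R}(z,w) = \operatorname{dist}_{\mathbb{D}}(z/R, w/R)$, and similarly $\operatorname{dist}_{D_r}(z,w) = \operatorname{dist}_{\mathbb{D}}(z/r, w/r)$; these are exactly the stated equalities.

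For the inequality in \eqref{hyp est 1}, I would compare the hyperbolic densities $\rho_{\mathbb{D}}$ and $\rho_{D_R}$ on the disk $D_s = D(0,s)$. The density of $D_R$ at a point $\zeta$ with $|\zeta| \le s$ is $\rho_{D_R}(\zeta) = \dfrac{2R}{R^2 - |\zeta|^2}$, while $\rho_{\mathbb{D}}(\zeta) = \dfrac{2}{1-|\zeta|^2}$, so the ratio is
\[
\frac{\rho_{D_R}(\zeta)}{\rho_{\mathbb{D}}(\zeta)} = \frac{R(1-|\zeta|^2)}{R^2-|\zeta|^2},
\]
and an elementary check (the right-hand side, as a function of $t=|\zeta|^2 \in [0,s^2]$, is monotone) shows this is minimised over $|\zeta| \le s$ at $|\zeta| = s$, giving the lower bound $c(s,R) = \dfrac{R(1-s^2)}{R^2-s^2} = \dfrac{1-s^2}{R - s^2/R}$. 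Integrating this pointwise density inequality along the hyperbolic geodesic of $\mathbb{D}$ joining $z$ and $w$ (which stays inside $D_s$ since $|z|,|w|\le s < 1$ and geodesics of $\mathbb{D}$ through points of modulus $\le s$ stay in $\overline{D_s}$) yields $\operatorname{dist}_{D_R}(z,w) \ge c(s,R)\operatorname{dist}_{\mathbb{D}}(z,w)$. For \eqref{hyp est 2} I would apply \eqref{hyp est 1} after rescaling: $\operatorname{dist}_{D_r}(z,w) = \operatorname{dist}_{\mathbb{D}}(z/r,w/r)$, and $\mathbb{D} = D_{1/r}$ scaled down by $1/r$... more directly, write $\operatorname{dist}_{\mathbb{D}}(z,w) = \operatorname{dist}_{D_{1/r}}(z/r, w/r) \ge c(s/r, 1/r)\operatorname{dist}_{\mathbb{D}}(z/r,w/r) = c(s/r,1/r)\operatorname{dist}_{D_r}(z,w)$, where the middle inequality is \eqref{hyp est 1} applied with radius $R = 1/r$ and with the points $z/r, w/r$ of modulus $\le s/r$; rearranging gives \eqref{hyp est 2}.

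That $0 < c(s,R) < 1$ is immediate from $0 < s < 1 < R$, since $R - s^2/R > R - 1/R > 1 - s^2 > 0$. For the limits, under the hypotheses \eqref{srR} I would estimate $1 - c(s,R) = \dfrac{(R - s^2/R) - (1-s^2)}{R - s^2/R} = \dfrac{(R-1) + s^2(1 - 1/R)}{R - s^2/R}$; as $s \to 1$ we have $r \to 1$ and $R \to 1$, so the numerator is $O(R-1) = O(1-r) = o(1-s) \to 0$ while the denominator tends to a positive limit ($\ge 1 - s^2 \to 0$, so I need a little care — but in fact $R - s^2/R \ge R - 1/R$ and $R \ge 1$ forces... hmm, the denominator does go to $0$). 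The right way: factor out; write $1 - c(s,R) = \dfrac{(R-1) + s^2(R-1)/R}{R - s^2/R}$ and note both numerator and denominator are $\asymp$ comparable quantities — more carefully, $R - s^2/R = (R-1) + (1 - s^2/R)$ and $1 - s^2/R = (1-s^2) + s^2(1 - 1/R) = (1-s^2) + \dfrac{s^2(R-1)}{R}$, so $R - s^2/R = (1-s^2) + (R-1)(1 + s^2/R)$; since $R - 1 = O(1-r) = o(1-s) = o(1-s^2)$, the dominant term is $1 - s^2$, hence $1 - c(s,R) = \dfrac{(R-1)(1 + s^2/R)}{(1-s^2)(1 + o(1))} = \dfrac{o(1-s^2)}{1-s^2}(1+o(1)) \to 0$, proving \eqref{cR}. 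For \eqref{cr} I substitute $s \mapsto s/r$ and $R \mapsto 1/r$ into the same formula and check the analogous smallness: $1/r - 1 = (1-r)/r$ plays the role of $R-1$, and $1 - (s/r)^2 = (r^2 - s^2)/r^2 \ge (1-s^2)(1+o(1))$ using $1 - r = o(1-s)$, so the same argument gives $c(s/r, 1/r) \to 1$.

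The main obstacle I anticipate is the bookkeeping in the limit computations: one must verify that the denominator $R - s^2/R$ (respectively its rescaled analogue) is asymptotically dominated by the $1 - s^2$ term rather than by $R - 1$, which requires using the precise hypothesis $1 - r = o(1-s)$ together with $R - 1 = O(1-r)$ in the right order; getting the chain $R - 1 = O(1-r) = o(1-s) = o(1-s^2)$ correct is the crux. Everything else — the conformal-invariance identities, the density ratio computation, and the monotonicity check pinning the extremum at $|\zeta| = s$ — is routine, though I would state the monotonicity claim carefully since it is what produces the clean closed form for $c(s,R)$.
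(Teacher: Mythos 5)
The core of your argument is the same as the paper's, but the step in which you integrate along the hyperbolic geodesic of $\mathbb{D}$ joining $z$ and $w$ points the wrong way. Your pointwise comparison $\rho_{D_R}(\zeta)\ge c(s,R)\,\rho_{\mathbb{D}}(\zeta)$ on $\overline{D(0,s)}$ (with the extremum correctly located at $|\zeta|=s$) is exactly the inequality the paper obtains after the substitution $\zeta=Rt$; however, integrating it along the $\mathbb{D}$-geodesic $\gamma$ from $z$ to $w$ only gives $\int_{\gamma}\rho_{D_R}\,|d\zeta|\ge c(s,R)\operatorname{dist}_{\mathbb{D}}(z,w)$, and the left-hand side is merely an \emph{upper} bound for $\operatorname{dist}_{D_R}(z,w)$, since $\gamma$ is just a competitor path in $D_R$, not the $D_R$-geodesic (the two geodesics are different curves in general). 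So as written the chain does not yield \eqref{hyp est 1}. To get a lower bound for $\operatorname{dist}_{D_R}(z,w)$ you must integrate along the curve that realises it, namely the $D_R$-geodesic from $z$ to $w$: this curve also stays in $\overline{D(0,s)}$ (Euclidean disks centred at $0$ are hyperbolic balls for $D_R$ just as for $\mathbb{D}$, hence hyperbolically convex), the density inequality applies along it, and then $\int\rho_{\mathbb{D}}\,|d\zeta|\ge\operatorname{dist}_{\mathbb{D}}(z,w)$ because the distance in $\mathbb{D}$ is an infimum over all joining paths. This is precisely what the paper does: it parametrises the $D_R$-geodesic as $R\gamma$, where $\gamma$ is the $\mathbb{D}$-geodesic between $z/R$ and $w/R$, via the change of variable $\zeta=Rt$. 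The fix is immediate, but the geodesic choice is the one step that actually carries the inequality, so it has to be stated correctly.

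The rest is fine and follows the paper's route: the conformal-invariance identities, the deduction of \eqref{hyp est 2} from \eqref{hyp est 1} with $(s,R)$ replaced by $(s/r,1/r)$ and the points by $z/r,w/r$, and the asymptotics giving \eqref{cR} and \eqref{cr} under \eqref{srR} are all correct and essentially identical to the paper's computation $c(s,R)=\frac{R(1-s)(1+s)}{(R-s)(R+s)}$. One small slip: your justification of $c(s,R)<1$ via the chain $R-s^2/R>R-1/R>1-s^2$ is false in general (take $R$ close to $1$ and $s$ small, e.g.\ $R=1.001$, $s=0.1$, where $R-1/R\approx 0.002<0.99=1-s^2$); but the identity you use later, $R-s^2/R=(1-s^2)+(R-1)\left(1+s^2/R\right)$, gives $c(s,R)<1$ at once since $R>1$, so this is harmless once rewritten.
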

\begin{proof}
Suppose that $0<s<r<1<R$ and take $z,w \in \mathbb{D}$ with $|z|,|w|\leq s$. Let $\gamma$ be the hyperbolic geodesic in $\mathbb{D}$ joining ${z}/{R}$ to ${w}/{R}$. Then
$$
\operatorname{dist}_{\mathbb{D}}({z}/{R},{w}/{R})= \int_{\gamma}\frac{2\,|dt|}{1-|t|^2}.
$$
Now substitute $\zeta= Rt$, $t \in \mathbb{D}$, so $|d\zeta|=R|dt|.$ Also let $R\gamma := \{Rz: z \in \gamma\}$. Since $R>1$, we have
\[
\operatorname{dist}_{\mathbb{D}}({z}/{R},{w}/{R})= \frac{1}{R}\int_{R\gamma}\frac{2\,|d\zeta|}{1-|\zeta|^2/R^2}= R\int_{R\gamma}\frac{2\,|d\zeta|}{R^2-|\zeta|^2}.
\]
Now for $\zeta \in R\gamma$ we have $|\zeta|\leq s$, so
\[
\frac{R^2-|\zeta|^2}{1-|\zeta|^2} \leq \frac{{ R^2-s^2}}{1-s^2}, \quad \text{for }\zeta \in R\gamma.
\]

Hence
\[
\operatorname{dist}_{\mathbb{D}}({z}/{R},{w}/{R})= R\int_{R\gamma}\frac{2\,|d\zeta|}{R^2-|\zeta|^2} \geq  \frac{R(1-s^2)}{R^2-s^2} \int_{R\gamma}\frac{2\,|d\zeta|}{1-|\zeta|^2}\geq c(s,R)\operatorname{dist}_{\mathbb{D}}(z,w),
\]
since
\[
\operatorname{dist}_{\mathbb{D}}(z,w)= \min \left\{\int_{\gamma'}\frac{2\,|d\zeta|}{1-|\zeta|^2}:\;\text{for all paths}\;\gamma'\;\text{joining}\;z\;\text{to}\;w\;\text{in}\; \mathbb{D}\right\}.
\]
This proves~(\ref{hyp est 1}).

Next,
$$\operatorname{dist}_{D_r}(z,w)=\operatorname{dist}_{\mathbb{D}}({z}/{r},{w}/{r})\;\;\text{and}\;\;\left|\frac{z}{r}\right|,\left|\frac{w}{r}\right|\leq \frac{s}{r}<1.$$
Hence, by (\ref{hyp est 1}), with $r$ and $R$ replaced by $s/r$ and $1/r$, and $z,w$ replaced by $z/r$ and $w/r$, we obtain
\[
\operatorname{dist}_{\mathbb{D}}({z}/{r},{w}/{r}) \leq \frac{1}{c(s/r,1/r)}\operatorname{dist}_{D_{1/r}}({z}/{r},{w}/{r})= \frac{1}{c(s/r,1/r)}\operatorname{dist}_{\mathbb{D}}(z,w).
\]
This proves~\eqref{hyp est 2}.

It is clear that $0<c(s,R)<1$ since $0<s<1<R$. Finally, suppose that \eqref{srR} holds. Then  $R-1=O(1-r)=o(1-s)$ as $s\ra1$ and hence
\[
c(s,R)=\frac{R(1-s)(1+s)}{(R-s)(R+s)}= \frac{R(1-s)}{1-s+o(1-s)}\,\frac{1+s}{R+s}\to 1\;\text{ as } s\to 1,
\]
and
\[
c(s/r,1/r)= \frac{(r-s)(1+s/r)}{(1-s)(1+s)}= \frac{1-s + o(1-s)}{1-s}\,\frac{1+s/r}{1+s} \to 1\;\text{ as } s\to 1,
\]
which give \eqref{cR} and \eqref{cr}.
\end{proof}

We now give our main construction result, which we use in Section \ref{sec:allexamples} to construct examples.
In these examples, we shall prescribe the orbits of at most two points $z_1,z_2 \in D(0,r_0)$, although the result below allows us to prescribe the orbits of any finite number of points in $D(0,r_0)$.

\begin{thm}[Main construction]\label{thm:main construction}

Let $(b_{n})$ be a sequence of Blaschke products of corresponding degrees $d_n \geq 1$, let $(T_n)$ be   the sequence of translations $z \mapsto z+4n$, and let $(D_n)$ be the sequence of disks $D_n= \{z: |z-4n|<1\},$ $n \geq 0$. Suppose also that $j\in \N$ and $z_i \in D_0$, $1\leq i\leq j$.
Then there exists a transcendental entire function~$f$ having an orbit of bounded, simply connected, escaping, wandering domains $U_n$
such that, for $n\geq 0$,
\begin{itemize}
\item[(i)]  $\overline{\Delta_n'}:=\overline{D(4n,r_n)} \subset U_n \subset D(4n, R_n):=\Delta_n$, where $0<r_n<1<R_n$ and $r_n, R_n \to 1$ as $n \to \infty$;
\item[(ii)] $f_{n+1}:= T_{n+1} \circ b_{n+1} \circ T_{n}^{-1}$ is holomorphic on $\overline{\Delta_n},$ and $|f(z)-f_{n+1}(z)|\to 0$ uniformly on $\overline{\Delta_n}$ as $n \to \infty$;
\item[(iii)] $f^n(z_i)=F_{n}(z_i)$ and $f'((f^n)(z_i))=  f'_{n+1}(F_{n}(z_i)),\;1 \leq i\leq j$, where $F_n=f_n\circ \dots \circ {f_1}$;
\item[(iv)] $f:U_{n} \to U_{n+1}$ has degree $d_{n+1}.$
\end{itemize}

 Finally, if $z, z' \in \overline{D(0,r_0)}$, then we have
\begin{equation}\label{eqtn:double inequality}
 k_n\operatorname{dist}_{D_{n}}(f^{n}(z), f^{n}(z')) \leq \operatorname{dist}_{U_{n}}(f^{n}(z), f^{n}(z'))\leq  K_n \operatorname{dist}_{D_{n}}(f^{n}(z), f^{n}(z')),
\end{equation}
where $0<k_n<1<K_n$ and $k_n,K_n \to 1$ as $n \to \infty$.
\end{thm}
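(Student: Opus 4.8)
The plan is to deduce Theorem~\ref{thm:main construction} from Theorem~D together with the strong Runge approximation of Lemma~\ref{thm:Runge's approx}, taking all the geometric data in Theorem~D to be concentric circles. By an inductive process described below I would fix radii $0<r_n<1<R_n$ with $r_n\nearrow 1$ and $R_n\searrow 1$, set $\gamma_n:=\partial D(4n,r_n)$ and $\Gamma_n:=\partial D(4n,R_n)$, take $D$ to be a small disk far from the real axis (say centred at $-10$) on which the target map is an affine contraction fixing the centre of $D$, and, with $n_k:=k$, take $L_k$ to be a compact subset of $\operatorname{ext}\Gamma_k$ lying just outside $\Gamma_k$, within distance $\eta_k$ of every point of $\Gamma_k$ and with $\C\setminus L_k$ connected (e.g.\ a finite union of small disjoint disks strung around $\Gamma_k$, or a slit annular neighbourhood of it). On the pairwise disjoint compacta $\overline{\Delta_n}:=\overline{D(4n,R_n)}$, the sets $L_k$, and $\overline D$ — enumerated so that all but one of them tend to infinity, each with connected complement — I define $\psi$ to equal $f_{n+1}=T_{n+1}\circ b_{n+1}\circ T_n^{-1}$ on $\overline{\Delta_n}$ (holomorphic there once $R_n$ is small enough to avoid the poles of $b_{n+1}$) and to equal the affine contraction into $D$ on each $L_k$ and on $\overline D$. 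Applying Lemma~\ref{thm:Runge's approx}, with errors $\varepsilon_m$ specified below and with interpolation data $f=\psi$, $f'=\psi'$ at the points $F_n(z_i)\in D_n\subset\overline{\Delta_n}$, $1\le i\le j$, produces a transcendental entire function $f$ (transcendence is easily ensured within the construction, e.g.\ by also prescribing infinitely many zeros tending to infinity); then $f^n(z_i)=F_n(z_i)$ and $f'(F_n(z_i))=f'_{n+1}(F_n(z_i))$, i.e.\ (iii), follow by an immediate induction, and (ii) holds provided $\varepsilon_m\to 0$.

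The heart of the construction is the inductive choice of $r_n$, $R_n$, auxiliary radii $s_n\in(0,r_n)$, the errors $\varepsilon_m$, and the $\eta_k$. The constraints are: (1)~$R_n$ small enough that $b_{n+1}$ is holomorphic on $\overline{D(0,R_n)}$; (2)~the nesting inequalities $r_{n+1}>\max_{|w|=r_n}|b_{n+1}(w)|$ and $R_{n+1}<\min_{|w|=R_n}|b_{n+1}(w)|$, consistent since the minimum exceeds $1$ while the maximum is below $1$; (3)~$r_n$ large enough that every preimage of $b_{n+1}(0)$ under $b_{n+1}$ lies in $D(0,r_n)$ (needed for (iv)); (4)~$R_n-1=O((1-r_n)^2)$ and $s_n<r_n$ with $1-r_n=o(1-s_n)$, so that the constants of Lemma~\ref{lem:hyp estimate} tend to $1$; (5)~$s_{n+1}$ large enough that $\max_{|w|=s_n}|b_{n+1}(w)|<s_{n+1}$, to propagate the containment below; (6)~$r_n,R_n\to 1$; (7)~$\eta_k=o(R_k-r_k)$; and finally $\varepsilon_m$ small enough, namely smaller than the various positive gaps appearing in the Rouch\'e arguments below and small enough that $f(\overline D\cup\bigcup_k L_k)\subset D$. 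Each requirement is an open condition with positive slack, so the induction can be carried out; this bookkeeping is routine but is, to my mind, the main technical obstacle, together with the translation of approximation into topology in the next step.

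With $f$ in hand I would verify hypotheses (a)--(f) of Theorem~D. Properties (a), (b) are immediate from the placement of the concentric circles and the disjointness of all the pieces; (f) is (7); (e) holds because $\psi$ maps $\overline D\cup\bigcup_k L_k$ well inside $D$ and $\varepsilon_m$ is small. For (c) and (d), since $|f-f_{n+1}|$ is small on $\overline{\Delta_n}$, while $f_{n+1}$ carries $\gamma_n$ into $\overline{D(4(n+1),\max_{|w|=r_n}|b_{n+1}|)}\Subset\operatorname{int}\gamma_{n+1}$ and carries $\Gamma_n$ into $\{\,|w-4(n+1)|>\min_{|w|=R_n}|b_{n+1}|\,\}\subset\operatorname{ext}\overline{\Delta_{n+1}}$, a Rouch\'e/winding-number argument shows that $\gamma_{n+1}$ surrounds $f(\gamma_n)$ and that $f(\Gamma_n)$ surrounds $\Gamma_{n+1}$ with winding number $d_{n+1}$. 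Theorem~D then produces bounded, simply connected wandering domains $U_n$ with $\overline{D(4n,r_n)}\subset U_n\subset D(4n,R_n)$, which is (i); these are escaping since $R_n\to 1$ forces $\inf_{z\in U_n}|z|\to\infty$; and the ``moreover'' clause of Theorem~D, applied with $z_n=4n$ (at which, using (3), both $f(\gamma_n)$ and $f(\Gamma_n)$ wind $d_{n+1}$ times about $f(4n)$), gives (iv).

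Finally, for \eqref{eqtn:double inequality}, I would first show by induction — using $|f-f_{n+1}|$ small and $b_{n+1}(\overline{D(0,\rho)})\subset\overline{D(0,\max_{|w|=\rho}|b_{n+1}|)}$ — that $f^n(\overline{D(0,r_0)})\subset\overline{D(4n,s_n)}$ for all $n\ge1$ (the base step using (5) and the smallness of $\varepsilon_0$), noting also that $f^n(\overline{D(0,r_0)})\subset D(4n,r_n)\subset D_n\cap U_n$. Translating by $T_n^{-1}$ and writing $\zeta=f^n(z)-4n$, $\zeta'=f^n(z')-4n$ (so $|\zeta|,|\zeta'|\le s_n<r_n$), the domain monotonicity of the hyperbolic metric for $D(0,r_n)\subset U_n-4n\subset D(0,R_n)$ gives
\[
\operatorname{dist}_{D(0,R_n)}(\zeta,\zeta')\le\operatorname{dist}_{U_n}(f^n(z),f^n(z'))\le\operatorname{dist}_{D(0,r_n)}(\zeta,\zeta'),
\]
and Lemma~\ref{lem:hyp estimate} (with $s=r_n$, $R=R_n$ in \eqref{hyp est 1}, and with $r=r_n$, $s=s_n$ in \eqref{hyp est 2}) bounds the two outer quantities below by $c(r_n,R_n)\operatorname{dist}_{D_n}(f^n(z),f^n(z'))$ and above by $c(s_n/r_n,1/r_n)^{-1}\operatorname{dist}_{D_n}(f^n(z),f^n(z'))$ respectively, the two constants tending to $1$ by (4) and \eqref{srR}. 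This is \eqref{eqtn:double inequality} for large $n$, with $k_n=c(r_n,R_n)$ and $K_n=c(s_n/r_n,1/r_n)^{-1}$; for the finitely many remaining $n$ one obtains admissible finite $k_n,K_n>0$ simply because $f^n(\overline{D(0,r_0)})$ is a compact subset of both $U_n$ and $D_n$, so the ratio $\operatorname{dist}_{U_n}/\operatorname{dist}_{D_n}$, which extends continuously across the diagonal, is bounded there both above and below. The main difficulty throughout is organizing the interlocking choices of radii and errors so that (1)--(7) and the Rouch\'e estimates all hold at once.
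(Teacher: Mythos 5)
Your proposal is correct and follows essentially the same route as the paper's own proof: concentric circles $\gamma_n,\Gamma_n$ about $4n$, auxiliary compacta $L_k$ hugging $\Gamma_k$ from outside together with a disk $D$ mapped into itself, the strengthened Runge lemma (Lemma~\ref{thm:Runge's approx}) applied to $\psi=f_{n+1}$ on $\overline{\Delta_n}$ with interpolation at the points $F_n(z_i)$, verification of the hypotheses of Theorem~D (and of the degree statement) by winding-number/Rouch\'e estimates with errors dominated by the geometric gaps, and finally Lemma~\ref{lem:hyp estimate} with an intermediate radius $s_n$ to get the double inequality. The only slips are in wording and are harmless: on the $L_k$ the map $\psi$ should be taken, say, constant with value in $D$ (a single affine contraction fixing the centre of $D$ does not send the far-away $L_k$ into $D$), and the sets $L_k$ must lie strictly outside $\overline{\Delta_k}$ (so not an annular neighbourhood of $\Gamma_k$ itself) --- exactly as in the paper's choice of $\psi\equiv-4$ on slit circles of radius $R_k+\delta_k^2/2$.
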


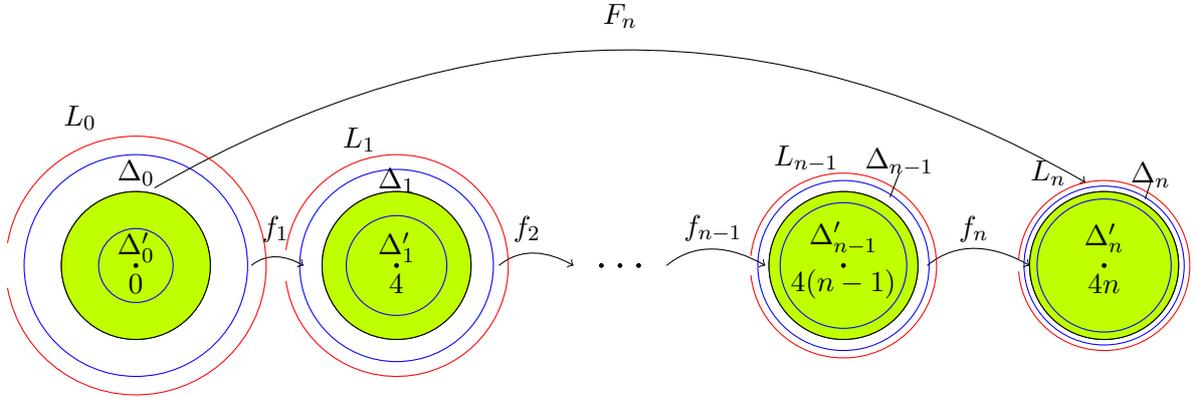
\begin{figure}[hbt!]

\centering
\begin{tikzpicture}[scale=2.45] 
\draw  (1.2,0) circle [radius=0.4]; 

\draw[blue]  (1.2,0) circle [radius=0.46]; 
\draw (2.6,0) circle [radius=0.4]; 

\draw[blue] (2.6,0) circle [radius=0.43]; 

\draw  (-1.2,0) circle [radius=0.4]; 
\draw[blue]  (-1.2,0) circle [radius=0.52]; 
\draw (-2.6,0) circle [radius=0.4]; 
\draw[blue] (-2.6,0) circle [radius=0.6]; 
\draw [->] (0:1.65) to [out=40,in=150] (0:2.2);
\draw [->] (0:0.25) to [out=40,in=150] (0:0.78);
\draw [->] (-1.98,0) to [out=40,in=150] (-1.7,0);
\draw [->] (0:-0.65) to [out=40,in=150] (0:-0.25);
\draw [->] (-2.5,0.42)to [out=30,in=150] (2.5,0.45);
\draw[fill=lime] (-2.6,0) circle [radius=0.4];
\draw[fill=lime] (-1.2,0) circle [radius=0.4];
\draw[fill=lime] (1.2,0) circle [radius=0.4];
\draw[fill=lime] (2.6,0) circle [radius=0.4];
\draw[fill] (-2.6,0) circle [radius=0.01];
\draw[fill] (0:-0.1) circle [radius=0.01];
\draw[fill] (0:0) circle [radius=0.01];
\draw[fill] (0:0.1) circle [radius=0.01];
\draw[fill] (1.2,0) circle [radius=0.01];
\draw[fill] (2.6,0) circle [radius=0.01];
\draw[fill] (-1.2,0) circle [radius=0.01];
\draw[blue] (-2.6,0) circle [radius=0.2]; 
\draw[blue]  (1.2,0) circle [radius=0.34]; 
\draw[blue]  (-1.2,0) circle [radius=0.27]; 
\draw[blue] (2.6,0) circle [radius=0.36]; 
\draw[red] (-1.9,0) arc (0:170:0.7);
\draw[red] (-2.6+0.7*cos{190},0.7*sin{190}) arc(190:360:0.7);
\draw[red] (-0.6,0) arc (0:172:0.6);
\draw[red] (-1.2+0.6*cos{188},0.6*sin{188}) arc(188:360:0.6);
\draw[red] (1.7,0) arc (0:174:0.5);
\draw[red] (1.2+0.5*cos{186},0.5*sin{186}) arc(186:360:0.5);
\draw[red] (3.06,0) arc (0:176:0.46);
\draw[red] (2.6+0.46*cos{184},0.46*sin{184}) arc(184:360:0.46);
\draw (1.45,0.37) to (1.5,0.5);
\draw (2.82,0.36) to (2.85,0.45);
\node at (-2.6,-0.1) {$0$};
\node at (-1.2,-0.1) {$4$};
\node at (1.2,-0.1) {$4(n-1)$};
\node at (2.6,-0.1) {$4n$};
\node at (-2.6,0.5) {$\Delta_0$};
\node at (-2.9,0.8) {$L_0$};
\node at (-1.4,0.68) {$L_1$};
\node at (1,0.58) {$L_{n-1}$};
\node at (2.3,0.5) {$L_n$};
\node at (-1.2,0.46) {$\Delta_1$};
\node at (1.2,0.15) {$\Delta_{n-1}'$};
\node at (2.85,0.49) {$\Delta_n$};
\node at (-2.6,0.1) {$\Delta_0'$};
\node at (-1.2,0.1) {$\Delta_1'$};
\node at (1.5,0.56) {$\Delta_{n-1}$};
\node at (2.6,0.15) {$\Delta_n'$};
\node at (-1.85,0.2) {$f_1$};
\node at (-0.5,0.2) {$f_2$};
\node at (0.5,0.2) {$f_{n-1}$};
\node at (1.9,0.2) {$f_{n}$};
\node at (0,1.35) {$F_n$};
\end{tikzpicture}

\caption{\label{fig:construction}Sketch of the setup of Theorem \ref{thm:main construction}. In green, the disks $D_n$ centred at $4n$. In blue, the boundaries of the disks of radii $r_n$ and $R_n$ in between which lie the boundaries of the wandering domains. In red, the curves $L_n$ introduced in the proof.}
\end{figure}

\begin{proof}
For $n \geq 0$, let
\[
b_n(z) = e^{i \theta_n} \prod_{j=1}^{d_n}\frac{z+a_{n,j}}{1+\overline{a_{n,j}}z},
\]
where $a_{n,j} \in \mathbb{D}$ are not necessarily different from each other, and $\theta_n \in [0, 2\pi)$.

We first define the increasing sequence $(r_n)$ and the decreasing sequence $(R_n)$ inductively. These sequences determine  the following circles which play a key role in the proof (see Figure~\ref{fig:construction}):
\begin{equation}\label{gammaGamma}
\gamma_n =\{z:|z-4n|=r_n\} \quad \text{and} \quad\Gamma_n=\{z:|z-4n|=R_n\}.
\end{equation}
First, take $R_0\in (1,3/2)$ such that $R_0< {1}/{\max_j\{|a_{1,j}|\}}\}$, which ensures that $b_1$ is holomorphic inside and in a neighborhood of $\Gamma_0$, and take $r_0\in (1/2,1)$ such that $r_0>\max_i|z_i|$ and also such that $b_{1}(z)=w$ has exactly $d_{1}$ solutions in $D(0,r_0)$ for $w \in D(0,1/2)$. Now assume that $r_k, R_k$ have been chosen for $k=0,\dots,n-1$, for some $n \in \N$. We choose $r_{n}$ and  $R_{n}$ so that the following statements all hold:
\begin{equation}\label{cond2}
0<1-r_{n} \leq \min \left\{\frac{1-r_{n-1}}{2}, {\operatorname{dist}(f_{n}(\gamma_{n-1}), \partial D_{n})^2} \right\};
\end{equation}
 \begin{equation}\label{cond3}
 f_{n}(\gamma_{n-1})\;\text{ winds exactly $d_{n}$ times round } D(4n,1/2);
 \end{equation}
\begin{equation}\label{cond1}
0<R_{n}-1 \leq \min \left\{\frac{R_{n-1}-1}{2}, {1-r_{n}}, \frac{1}{2} \operatorname{dist}(f_{n}(\Gamma_{n-1}), \partial D_{ n}), \frac{1}{\max_j\{|a_{n+1,j}|\}}- 1 \right\}.
\end{equation}
These properties prescribe the values $r_n$ and $R_n$, and hence the circles $\gamma_n$ and  $\Gamma_n$. In particular, by (\ref{cond2}) and (\ref{cond1}), the sequence $(r_n)$ increases to~1 and the sequence $(R_n)$ decreases to 1, and the maps $f_n$, $n \geq 0$, defined in property~(ii), satisfy
\begin{equation}\label{propa}
\gamma_{n+1}\;\text{ surrounds}\;f_{n+1}(\gamma_n),
\end{equation}
\begin{equation}\label{propb}
f_{n+1}(\Gamma_n)\;\text{ surrounds}\; \Gamma_{n+1}.
\end{equation}

Our aim is to use Lemma~\ref{thm:Runge's approx} to approximate all the maps $f_n$ by a single entire function $f$ such that, for $n \geq 0$, $\gamma_{n+1}$ surrounds $f(\gamma_n)$ and
 $f(\Gamma_n)$ surrounds $\Gamma_{n+1}$.

We first define
\begin{equation}\label{delta}
\delta_n= R_n-r_n,\quad \text{for } n \geq 0,
\end{equation}
and observe that $\delta_n \to 0$ as $n \to \infty$.

 We then define $L_n$, $n\geq 0$, to be the curve
 \begin{equation}\label{eq:reefs}
 L_n:=\{z:|z-4n|=R_n+\delta_n^2/2,\;|\operatorname{arg}(z)| \leq \pi - \delta_n^2\},
 \end{equation}
 so
 \begin{equation}\label{Lndist}
 \max \{\operatorname{dist}(z,L_n): z \in \Gamma_n\} \le 2\delta_n^2, \quad \text{for } n \geq 0,
 \end{equation}
 and define the error quantities
\begin{equation}\label{eq:error}
\varepsilon_n= \min \left\{\frac{1}{4} \operatorname{dist}(f_n(\gamma_{n-1}), \partial D_{n}), \frac{1}{4} \operatorname{dist}(f_n(\Gamma_{n-1}), \partial D_{n}), \delta_n/4 \right\}>0,
\end{equation}
for $n\geq 1$. Since $0<\varepsilon_n \leq \delta_n/4$, we have that $\varepsilon_n < \delta_0/4<1/4$, $n \geq 1$, and $\varepsilon_n \to 0$ as $n \to \infty$.

We now apply Lemma~\ref{thm:Runge's approx} to the sets $E_0=\overline{D(-4,1)}$ and $E_{2k+1} = L_{k}$, $E_{2k+2}=\overline{\Delta_{k}}$, for $k \ge 0$, with  the function $\psi$ defined by
 $$\psi(z)=
\begin{cases}
z/2 - 2,\;\text{ if}\;z \in \overline{D(-4,1)},\\
-4,\;\text{ if} \; z \in L_n, \; n \in \N,\\
f_n(z),\;\text{ if}\;z \in \overline{\Delta_n}, \; n \in \N.
\end{cases}
$$

Lemma~\ref{thm:Runge's approx} allows us to 
 choose finitely many points $z_{n,i}$, $1\le i \le j$, in each set $E_n$ where we do the approximation. The choice of these points in $\overline{D(-4,1)} \cup \bigcup_{n=0}^{\infty} L_n$ plays no role in our argument. In $E_2=\overline{\Delta_0}$ we choose $z_{2,i}=z_i \in D_0$, $1\le i \le j$, and in $E_{2k+2} = \overline{\Delta_{k}}$, $k \ge 1$, we choose $z_{2k+2,i} = F_k(z_i)$, $1\le i\le j$, where $F_k=f_k\circ \dots \circ {f_1}$.

It then follows from Lemma~\ref{thm:Runge's approx} that there exists an entire function~$f$ such that, for  $n \geq 0$,
\begin{equation}\label{approx1}
|f(z)-f_{n+1}(z)| < \varepsilon_{n+1}, \text{\ \  for }  z \in \overline{\Delta_n};
\end{equation}
\begin{equation}\label{approx2}
|f(z)+4| \leq 1/2,  \text{\ \  for }  z\in L_n;
\end{equation}
\begin{equation}\label{approx3}
|f(z) - z/2+2| <1/4,    \text{\ \  for }  z \in \overline{D(-4,1)};
\end{equation}
\begin{equation} \label{approx4}
f^n(z_i)=F_n(z_i),  \text{\ \  for }  1\leq i\leq j;
\end{equation}
\begin{equation}\label{approx5}
f'((f^n)(z_i))= f'(F_n(z_i))=f_{n+1}'(F_n(z_i)),  \text{\ \  for }  1\leq i\leq j.
\end{equation}

It follows from (\ref{cond2}), (\ref{cond1}), (\ref{approx1}) and (\ref{eq:error}) that, for $n \geq 0$,
\begin{equation}\label{propB}
\gamma_{n+1}\;\text{surrounds}\; f(\gamma_n) \;\text{ (which surrounds the point   } 4(n+1));
\end{equation}
\begin{equation}\label{propA}
f(\Gamma_n)\;\text{surrounds}\;\Gamma_{n+1}.
\end{equation}

We now apply Theorem D to the Jordan curves $\gamma_n$, $\Gamma_n$, $n \geq 0$, the compact curves $L_n$, $n \geq 0$, and the bounded domain $D = D(-4,1)\subset E_0$, noting that these sets satisfy the required hypotheses. Indeed, the hypotheses~(a) and~(b) are clearly true,~(c) follows from~\eqref{propB},~(d) follows from~\eqref{propA},~(e) holds by (\ref{approx2}) and (\ref{approx3}), and~(f) holds by~\eqref{delta} and~\eqref{Lndist}.

Part~(i) of our result now follows from Theorem D, part~(ii) is true by construction, and part~(iii) follows from~\eqref{approx4} and~\eqref{approx5}. We now show that part~(iv) holds.

By \eqref{eq:error} and \eqref{approx1}, we can write $f(z)=f_{n+1}(z)+e_{n+1}(z)$ for some holomorphic map $e_n(z)$ which satisfies  $|e_{n+1}(z)| < 1/4$, for $z\in \overline{\Delta_n}$.

By \eqref{cond3}, we have
 \[
 |f_{n+1}(z)-4(n+1)|\ge 1/2,\;\text{ for } z\in \gamma_n=\partial \Delta_n'.
 \]
It follows from this together with the fact that $|e_n(z)| < 1/4$, for $z\in \overline{\Delta_n}$ and \eqref{cond3} that
\begin{equation}\label{f-omits}
|f(z)-4(n+1)|\ge 1/4,\;\text{ for } z\in \gamma_n,
\end{equation}
and $f(\gamma_n)$ winds exactly $d_{n+1}$ times around $4(n+1)$, so~$f$ takes the value $4(n+1)$ exactly $d_{n+1}$ times in $\Delta_n'$. Similarly, by (\ref{cond3}), (\ref{eq:error}) and (\ref{approx1}), $f(\Gamma_n)$ winds exactly $d_{n+1}$ times around $4(n+1)$, so~$f$ takes the value $4(n+1)$ exactly $d_{n+1}$ times in $\Delta_n$. Therefore, by the final statement of Theorem~D, $f:U_n \to U_{n+1}$ has degree $d_{n+1}$.

It remains to prove the double inequality \eqref{eqtn:double inequality}, which compares the hyperbolic distances in $U_n$ between points of two orbits under~$f$ with the corresponding hyperbolic distances in the disks~$D_n$. To do this, we let $s_n:= 1-\tfrac34\operatorname{dist}(f_n(\gamma_{n-1}), \partial D_n)$, for $n\geq 1$, and note that, if $z,z' \in \overline{D(0,r_0)}$, then
\[
f^n(z), f^n(z')\in \overline{D(4n, s_n)}\subset \Delta_n', \;\text{ for } n\in \N,
\]
by \eqref{propa}, \eqref{eq:error} and \eqref{approx1}.

Now $1-r_n =o(1-s_n)$ as $n\to \infty$, by (\ref{cond2}), and $R_n-1 \leq 1-r_n$, by (\ref{cond1}), so the properties \eqref{srR} hold for the sequences $(s_n)$, $(r_n)$ and $(R_n)$. Also,
\[
 \operatorname{dist}_{\Delta_{n}}(f^n(z),f^n(z'))\le \operatorname{dist}_{U_{n}}(f^n(z),f^n(z'))\le \operatorname{dist}_{\Delta_{n}'}(f^n(z),f^n(z')),
\]
since $\Delta_n'\subset D_n \subset \Delta_n$. Therefore, we deduce from Lemma~\ref{lem:hyp estimate} (translated to the disks $D_n$) that
\[
c(s_n,R_n)\operatorname{dist}_{D_n}(f^n(z),f^n(z'))\le \operatorname{dist}_{U_{n}}(f^n(z),f^n(z'))\le \frac{1}{c(s_n/r_n,1/r_n)}\operatorname{dist}_{D_n}(f^n(z),f^n(z'))
\]
and
\[
 c(s_n,R_n) \to 1\;\text{ as } n \to \infty,\quad c(s_n/r_n,1/r_n) \to 1\;\text{ as } n \to \infty,
\]
which gives (\ref{eqtn:double inequality}).
\end{proof}


\section{Examples: Proof of Theorem E}\label{sect:proof of examples}
\label{sec:allexamples}
In this section we construct the examples described in Theorem~E. In every case we use Theorem~\ref{thm:main construction} and  the notation  there. Hence   $(b_{n})$ denotes the sequence of Blaschke products of degree $d_n \geq 1$; $(T_n)$ the sequence of real translations $z \mapsto z+4n$;  and  $(D_n)$ the sequence of disks $D_n= \{z: |z-4n|<1\},$ $n \geq 0$. Moreover, for $n \in \N$, we set $B_n=b_n\circ\dots\circ b_{1}$, $f_n=T_{n} \circ b_n \circ T_{n-1}^{-1},$ and  $F_n=f_n \circ\dots\circ f_{1}$, so $F_n = T_{n} \circ B_n$; see Figure~\ref{Bn-fig}.

\begin{center}
\begin{figure}[hbt!]\centering
\begin{tikzpicture}[scale=2.4] 
\draw  (1.2,0) circle [radius=0.4]; 
\draw (2.6,0) circle [radius=0.4]; 
\draw  (-1.2,0) circle [radius=0.4]; 
\draw (-2.6,0) circle [radius=0.4]; 
\draw [->] (0:1.65) to [out=40,in=150] (0:2.15);
\draw [->] (0:0.25) to [out=40,in=150] (0:0.75);
\draw [->] (0:-2.15) to [out=40,in=150] (0:-1.65);
\draw [->] (0:-0.75) to [out=40,in=150] (0:-0.25);
\draw [->] (-2.5,0.5)to [out=30,in=150] (2.5,0.5);
\draw [->] (-2.5,-0.5)to [out=330,in=210] (2.5,-0.5);
\draw [->] (-3,-0.2)to [out=200,in=30](-3.3,-0.3) to [out=230,in=240] (-2.9,-0.3);
\draw[fill] (0:-0.1) circle [radius=0.01];
\draw[fill] (0:0) circle [radius=0.01];
\draw[fill] (0:0.1) circle [radius=0.01];
\node at (-2.6,0) {$D_0$};
\node at (-1.2,0) {$D_1$};
\node at (1.2,0) {$D_{n-1}$};
\node at (2.6,0) {$D_n$};
\node at (-1.9,0.2) {$f_1$};
\node at (-0.5,0.2) {$f_2$};
\node at (0.5,0.2) {$f_{n-1}$};
\node at (1.9,0.2) {$f_{n}$};
\node at (0,1.35) {$F_n$};
\node at (0,-1.1) {$T_n$};
\node at (-3.2,-0.12) {$B_n$};
\end{tikzpicture}
\caption{\label{Bn-fig} The maps $f_n, B_n, T_n$}
\end{figure}
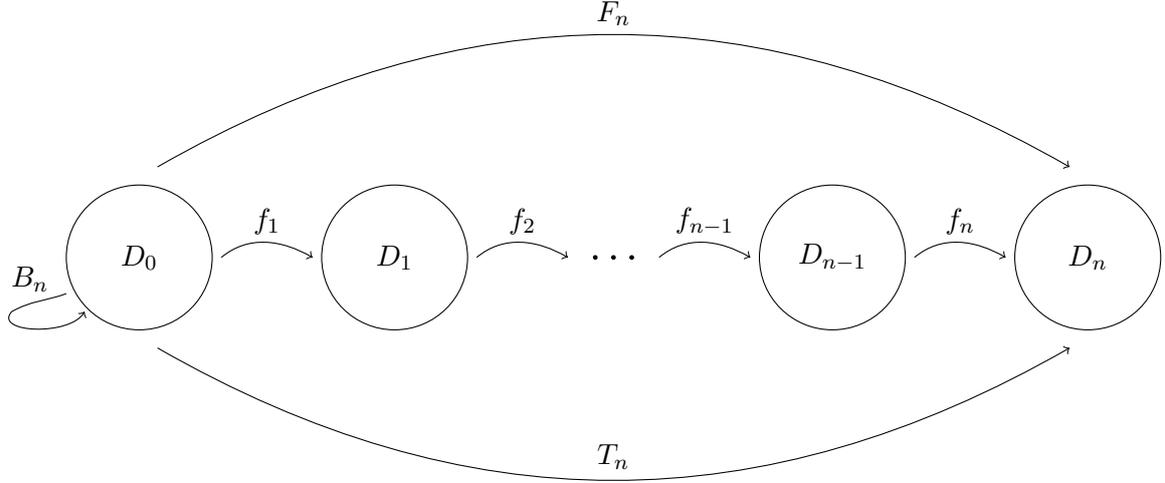
\end{center}

 \subsection{Preliminary lemmas}
 We first prove two lemmas that will be used in the constructions. 

\begin{lem}\label{bnboundary}
Let $f$ be a transcendental entire function with an orbit of bounded, simply connected, wandering domains $U_n$ arising from
Theorem~\ref{thm:main construction}, with Blaschke products $b_n$ and associated functions $B_n$ and $F_n$ such that $f^n(0) = F_n(0)$, for $n \in \N$. Then, we have the following cases.
\begin{itemize}
\item[\rm(a)] If $B_n(0) \to 0$ as $n \to \infty$, then, for all $z\in U_0$,
    \[
    \liminf_{n\to\infty} \operatorname{dist}(f^{n}(z),\partial U_{n})>0,
    \]
    that is, all orbits stay away from the boundary.
\item[\rm(b)] If there exists a subsequence $n_k\to \infty$ with $B_{n_k}(0) \to 1$ and a different subsequence $m_k\to\infty$ with $B_{m_k}(0) \to 0$, then $\operatorname{dist}(f^{n_k}(z),\partial U_{n_k})\to 0$ for all $z\in U$, while
    \[
    \liminf_{k \to \infty} \operatorname{dist}(f^{m_k}(z),\partial U_{m_k})>0, \quad\text{for all }z\in U.
    \]
\item[\rm(c)] If $B_n(0) \to 1$ as $n \to \infty$, then $\operatorname{dist}(f^{n}(z),\partial U_{n})\to 0$ for all $z\in U$, that is, all orbits converge to the boundary.
\end{itemize}

\end{lem}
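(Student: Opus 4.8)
The plan is to reduce the whole statement to the behaviour of the orbit of the single point $0$, whose position is completely explicit, and then to transfer the conclusion to every point of $U_0$ by means of Theorem~\ref{thm:everybody converges to the boundary or nobody}.

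First I would record that, since the translations $T_n$ telescope, $F_n=T_n\circ B_n$, so the hypothesis $f^n(0)=F_n(0)$ gives the explicit formula $f^n(0)=4n+B_n(0)$ with $|B_n(0)|<1$. As $0\in\overline{D(0,r_0)}\subset U_0$, its forward orbit satisfies $f^n(0)\in U_n$, and by part~(i) of Theorem~\ref{thm:main construction} we have $\overline{D(4n,r_n)}\subset U_n\subset D(4n,R_n)$ with $r_n\uparrow1$ and $R_n\downarrow1$.

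The core of the argument is the elementary two-sided estimate
\[
r_n-|B_n(0)|\ \le\ \operatorname{dist}(f^n(0),\partial U_n)\ \le\ R_n-|B_n(0)|,
\]
where the upper bound holds for every $n$ (the radial segment from $f^n(0)$ to the nearest point of $\partial D(4n,R_n)$, which lies outside $U_n$, must meet $\partial U_n$ at distance at most $R_n-|B_n(0)|$ from $f^n(0)$), and the lower bound holds whenever $|B_n(0)|<r_n$ (then the ball of radius $r_n-|B_n(0)|$ about $f^n(0)$ lies in $\overline{D(4n,r_n)}\subset U_n$). Since $r_n,R_n\to1$, the three cases follow for the orbit of $0$: if $B_n(0)\to0$ then $|B_n(0)|<r_n$ for large $n$ and the lower bound tends to $1$, so $\liminf_n\operatorname{dist}(f^n(0),\partial U_n)\ge1>0$; if $B_n(0)\to1$ then $|B_n(0)|\to1$ and the upper bound tends to $0$, so $\operatorname{dist}(f^n(0),\partial U_n)\to0$; and in the mixed situation of~(b) one applies the first conclusion along $(m_k)$ and the second along $(n_k)$.

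It remains to pass from $z=0$ to an arbitrary $z\in U_0$. Recalling that, in a simply connected domain, $\operatorname{dist}(w,\partial U_n)\to0$ along a subsequence if and only if $\rho_{U_n}(w)\to\infty$ along it, while $\operatorname{dist}(w,\partial U_n)$ stays bounded away from $0$ along a subsequence if and only if $\rho_{U_n}(w)$ stays bounded along it, the transfer is immediate from Theorem~\ref{thm:everybody converges to the boundary or nobody} with $a_n\equiv1$: part~(a) propagates the ``$\to\infty$'' behaviour of $\rho_{U_n}(f^n(\cdot))$ from $0$ to every $z$, and part~(b) propagates the ``bounded'' behaviour. The one point requiring a little care is that the subsequences $(n_k)$ and $(m_k)$ in conclusion~(b) must be exactly those of the hypothesis, which is precisely why one invokes the pointwise transfer of Theorem~\ref{thm:everybody converges to the boundary or nobody} rather than the trichotomy of Theorem~C, whose subsequences are not under our control. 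No step presents a genuine obstacle; the mild bookkeeping with subsequences and the $\operatorname{dist}$--$\rho$ dictionary is the only thing to watch.
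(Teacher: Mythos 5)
Your proof is correct and follows essentially the same route as the paper's: establish the claim for the single point $0$, whose orbit is explicitly $f^n(0)=4n+B_n(0)$, using the sandwich $\overline{D(4n,r_n)}\subset U_n\subset D(4n,R_n)$ from Theorem~\ref{thm:main construction}(i), and then transfer the behaviour to every $z\in U_0$ by the fact that all orbits in a simply connected wandering domain approach the boundary in the same way. The only difference is cosmetic: you invoke Theorem~\ref{thm:everybody converges to the boundary or nobody} (with $a_n\equiv1$, via the $\operatorname{dist}$--$\rho$ equivalence) rather than Theorem~C, which is indeed the slightly sharper way to handle the prescribed subsequences in part~(b), and you spell out the two-sided distance estimate that the paper leaves implicit.
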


\begin{proof}
It follows from Theorem C that all points in a simply connected wandering domain have the same limiting behaviour in relation to the boundary and so, in each case, it is sufficient to find just one point  whose orbit behaves as required. We choose this point to be $0 \in U_0$.

If $B_n(0) \to 0$ as $n \to \infty$ then
\[
f^n(0) - 4n = F_n(0) - 4n \to 0 \quad \text{as } n \to \infty
\]
and so, by Theorem~\ref{thm:main construction} part~(i), we have
\[
    \liminf_{n\to\infty} \operatorname{dist}(f^{n}(0),\partial U_{n}) = 1>0,
    \]
 which  is sufficient to prove part~(a).

If $B_n(0) \to 1$ as $n \to \infty$ then
\[
f^n(0) - (4n + 1) = F_n(0) - (4n+1) \to 0 \; \text{ as } n \to \infty
\]
and so, by Theorem~\ref{thm:main construction} part (i), we have
\[
     \operatorname{dist}(f^{n}(0),\partial U_{n}) \to 0\;\text{ as } n\to\infty,
    \]
 which is sufficient to prove part~(c).

The proof of part~(b) follows in a similar way.
\end{proof}

In some of our constructions we use the following properties about a specific family of Blaschke products of degree~2.

\begin{lem}
\label{lem:onethird}
Let $b(z)=\left( \frac{z+a}{1+az}\right)^2,$ where $1/3\le a<1$, and let $0<r<s<1$. Then
\begin{itemize}
\item[(a)] the function~$b$ has a fixed point at 1, which is attracting if $a>1/3$ and parabolic if $a=1/3$, and $b^n(r) \to 1$ as $n \to \infty$ for all $a\geq 1/3$;
\item[(b)] $\operatorname{dist}_{\mathbb{\D}}(b^n(r),b^n(s)) \nrightarrow 0$ as $n \to \infty$ if $a>1/3;$
\item[(c)] $\operatorname{dist}_{\mathbb{\D}}(b^n(r),b^{n+1}(r)) \sim O(1/n)$ as $n \to \infty$ if $a=1/3$.
\end{itemize}
\end{lem}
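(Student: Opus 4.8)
The plan is to reduce all three parts to the explicit rational structure of $b$. Write $b(z)=m_a(z)^2$ with $m_a(z)=(z+a)/(1+az)$ a disc automorphism. Clearing denominators in $m_a(z)^2-z$ and dividing out the fixed point $z=1$ gives
\[
b(z)-z=\frac{(z-1)\,q_a(z)}{(1+az)^2},\qquad q_a(z)=-a^2z^2+(1-2a-a^2)z-a^2,
\]
whose discriminant equals $(1-3a)(1+a)(1-a)^2$. Hence for $a>1/3$ the polynomial $q_a$ is a strictly negative downward parabola, while for $a=1/3$ one has $q_a(z)=-\tfrac{1}{9}(z-1)^2$ and therefore $b(z)-z=-(z-1)^3/(z+3)^2$. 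In either case $b(t)>t$ for $t\in(0,1)$, and since $b'(t)=2m_a(t)m_a'(t)>0$ on $(0,1)$, $b$ maps $(0,1)$ increasingly into itself; moreover $b'(1)=2m_a'(1)=2(1-a)/(1+a)$. This settles part (a): $b'(1)<1$ for $a>1/3$ and $b'(1)=1$ (with $b\ne\mathrm{id}$) for $a=1/3$, so $1$ is attracting, resp.\ parabolic; and since $b$ has degree $2$ it is not a disc automorphism, so the boundary fixed point $1$ with $b'(1)\le1$ is the Denjoy--Wolff point and $b^n\to1$ locally uniformly on $\D$, giving $b^n(r)\to1$ (alternatively: $(b^n(r))$ is increasing, bounded above by $1$, with limit a fixed point of $b$ in $(0,1]$, hence equal to $1$).

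For part (b), fix $a>1/3$, set $\lambda=b'(1)\in(0,1)$, $z_n=b^n(r)$, $z_n'=b^n(s)$, so $z_n,z_n'\in(0,1)$ tend to $1$ and $z_n\ne z_n'$ for all $n$ by injectivity of $b$ on $(0,1)$. From $b(z)-1=\lambda(z-1)+O((z-1)^2)$ one gets $1-z_{n+1}=\lambda(1-z_n)(1+O(1-z_n))$, hence $1-z_n=O(c^n)$ for some $c\in(\lambda,1)$. Writing the hyperbolic distortion as
\[
\|Db(z_n)\|_{\D}^{\D}=\frac{(1-z_n^2)\,b'(z_n)}{1-z_{n+1}^2}
\]
and expanding near $z=1$ gives $\|Db(z_n)\|_{\D}^{\D}=1+O(1-z_n)$, so $\sum_n\bigl(1-\|Db(z_n)\|_{\D}^{\D}\bigr)<\infty$. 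Now conjugate as in Corollary~\ref{cor zero}: pick M\"obius self-maps $M_n$ of $\D$ with $M_n(z_n)=0$, put $h_n=M_n\circ b\circ M_{n-1}^{-1}$ (so $h_n(0)=0$ and $|h_n'(0)|=\|Db(z_{n-1})\|_{\D}^{\D}$) and $H_n=h_n\circ\cdots\circ h_1=M_n\circ b^n\circ M_0^{-1}$. Since $\sum_n(1-|h_n'(0)|)<\infty$, Theorem~\ref{Thm zero}(b) applied at $w=M_0(s)$, for which $H_n(w)=M_n(z_n')\ne0$, yields $H_n(M_0(s))\nrightarrow0$, i.e.\ $\dist_{\D}(z_n,z_n')=\dist_{\D}(M_n(z_n'),0)\nrightarrow0$. (One could instead transfer to the right half-plane via $z\mapsto(1+z)/(1-z)$, where $1$ becomes a boundary fixed point with local behaviour $w\mapsto w/\lambda+O(1)$, and use the associated Valiron--Koenigs function $\Phi$ with $\Phi\circ B=\lambda^{-1}\Phi$, $\Phi(w)\sim w$, to obtain convergence of $\dist_{\D}(z_n,z_n')$ to a positive limit.)

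For part (c), fix $a=1/3$, so $b(z)-z=-(z-1)^3/(z+3)^2$. With $z_n=b^n(r)$ and $h_n=1-z_n$ we have $h_n\downarrow0$ and $h_{n+1}=h_n-h_n^3/(4-h_n)^2=h_n-\tfrac{1}{16}h_n^3+O(h_n^4)$; the standard telescoping of $h_{n+1}^{-2}-h_n^{-2}=\tfrac18+O(h_n)$ then gives $h_n^{-2}=\tfrac18 n+o(n)$, i.e.\ $h_n\sim(8/n)^{1/2}$. Since $b^{n+1}(r)=b(z_n)>z_n$ and both points lie on the geodesic $(-1,1)$, \eqref{hdist0} gives
\[
\dist_{\D}\bigl(b^n(r),b^{n+1}(r)\bigr)=\int_{z_n}^{b(z_n)}\frac{2\,dt}{1-t^2}=\log\frac{(1+b(z_n))(1-z_n)}{(1-b(z_n))(1+z_n)}.
\]
Using $1-b(z_n)=h_{n+1}=h_n\bigl(1-\tfrac{1}{16}h_n^2+O(h_n^3)\bigr)$ and $(1+b(z_n))/(1+z_n)=1+O(h_n^3)$, the right-hand side equals $\log\bigl(1+\tfrac{1}{16}h_n^2+o(h_n^2)\bigr)\sim\tfrac{1}{16}h_n^2\sim\tfrac{1}{2n}$; in particular $\dist_{\D}(b^n(r),b^{n+1}(r))=\Theta(1/n)$.

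The step I expect to be the main obstacle is part (b): parts (a) and (c) are essentially bookkeeping with the explicit form of $b$, whereas (b) requires the quantitative rate $1-z_n=O(c^n)$ together with the expansion showing that the hyperbolic distortion of $b$ along the orbit tends to $1$ geometrically fast, which is precisely what makes $\sum_n(1-\|Db(z_n)\|_{\D}^{\D})$ converge and lets Theorem~\ref{Thm zero} apply (or, on the alternative route, requires setting up and exploiting the boundary Valiron--Koenigs linearization and checking that the relevant limit is nonzero).
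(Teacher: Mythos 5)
Your proof is correct, and parts (b) and (c) follow a genuinely different route from the paper's. For (b), the paper stays elementary: it bounds $\dist_{\D}(b^n(r),b^n(s))$ below by $\log\frac{1-b^n(r)}{1-b^n(s)}$ (see \eqref{eq:lem6.1}) and uses the attracting boundary fixed point to write $1-b^n(r)\sim d\lambda^n$ and $1-b^n(s)\sim c\lambda^n$ with $d>c>0$, so the limit is at least $\log(d/c)>0$; you instead feed the orbit into the non-autonomous disc machinery of Section~2, checking that the hyperbolic distortion along the orbit is $1+O(1-z_n)$ with $1-z_n=O(c^n)$, so that $\sum_n\bigl(1-\|Db(z_n)\|_{\D}^{\D}\bigr)<\infty$ and Theorem~\ref{Thm zero}(b), after the M\"obius conjugation of Corollary~\ref{cor zero}, prevents the distance from tending to $0$ (your verification that $z_n\ne z_n'$ via injectivity of $b$ on $(0,1)$ is exactly what makes that theorem applicable). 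Your route is a little longer but avoids having to justify the strict inequality $d>c$ between the two Koenigs-type constants, which the paper asserts without comment; the paper's route is shorter and yields an explicit positive lower bound for the limiting distance. For (c), the paper quotes the general parabolic asymptotics \eqref{eq:parabolic1} to get $1-b^n(r)\sim c/n^{1/2}$ and then proves only the upper bound via $\int 2\,dt/(1-t)$, whereas you exploit the explicit identity $b(z)-z=(1-z)^3/(z+3)^2$ and the telescoping of $h_n^{-2}$ to obtain $h_n\sim(8/n)^{1/2}$ and the sharper two-sided asymptotic $\dist_{\D}(b^n(r),b^{n+1}(r))\sim 1/(2n)$, which is stronger than the stated $O(1/n)$. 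Part (a), which the paper dismisses as straightforward, is carried out carefully in your write-up (the factorization of $b(z)-z$, the discriminant computation, $b'(1)=2(1-a)/(1+a)$, and either Denjoy--Wolff or the monotone-orbit argument) and is fine.
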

\begin{proof}
The proof of part (a) is straightforward.\\

For part (b) note first that
\begin{equation}\label{eq:lem6.1}
\operatorname{dist}_{\mathbb{D}}(b^n(r),b^{n}(s))= \int_{b^n(r)}^{b^n(s)} \frac{2\,dt}{1-t^2} \geq \int_{b^n(r)}^{b^n(s)} \frac{dt}{1-{t}}  = \log \frac{1 - b^n(r)}{1-b^n(s)}.\end{equation}

Also, since~$1$ is an attracting fixed point of $b$ when $a>1/3$, there exist $\lambda \in (0,1)$ and $d>c>0$ such that $1-b^n(r) \sim d\lambda^n$ and $1-b^n(s) \sim c\lambda^n$ as $n\to\infty$. Hence, by (\ref{eq:lem6.1}),
$$
\lim_{n\to \infty}\operatorname{dist}_{\mathbb{D}}((b^n(r),{b^{n}(s)})   \geq \log \frac{d}{c} >0.
$$
For part~(c) we use a similar approach. First note that
\begin{equation}\label{eq:lem6.1b}
\operatorname{dist}_{\mathbb{D}}(b^n(r),b^{n+1}(r)) \leq  \int_{b^n(r)}^{{ b^{n+1}(r)}} \frac{2\,dt}{1- t}= { 2 } \log \frac{1 - b^n(r)}{1-b^{n+1}(r)}.
\end{equation}
When $a = 1/3$, we have $b(1)=1$, $b'(1)=1$, $b''(1)=0$ and $b'''(1)\ne 0$, so $1-b^n(r) \sim c/n^{1/2}$ as $n \to \infty$, where $c>0$; see \eqref{eq:parabolic1}. We deduce that
\[
\frac{1-b^n(r)}{1-b^{n+1}(r)} \sim \frac{(n+1)^{1/2}}{n^{1/2}}= \left(1+\frac{1}{n}\right)^{1/2}=1+\frac{1}{2n}+O(1/n^2)\;\text{ as }n\to\infty.
\]
The result now follows by putting this estimate into~\eqref{eq:lem6.1b}.
\end{proof}

 \subsection{The nine types of simply connected wandering domains}
We now prove part (a) of Theorem E by constructing examples corresponding to each of the nine cases given in Theorems~A and~C. The following maps will play key roles in the constructions:
\[
b(z)=\left(\frac{z+a}{1+az}\right)^2, \;\text{ for } 1/3\le a<1,
\]
\begin{equation}\label{mun}
\mu_n(z) = \frac{z+a_n}{1+a_nz} \quad \text{and} \quad \tilde{\mu}_n(z)= \frac{z-a_n^2}{1-a_n^2z}, \quad \text{for } n \in \N,
\end{equation}
where $a_n \in (0,1)$ is an arbitrary sequence satisfying $a_n \to 1$ as $n \to \infty$.

Examples~1,~2 and~3, which follow, correspond to the three cases of Theorem~A. Within each of them we give three functions, corresponding to the three cases of Theorem~C.
\vspace{0.25cm}
\setcounter{ex}{0}
\begin{ex}[\bf Three contracting wandering domains]
For each of the cases (a), (b) and~(c)  of Theorem C, there exists a transcendental entire function $f$ having a sequence of bounded, simply connected, escaping  contracting wandering domains $(U_n)$ with the stated behaviour:
\begin{itemize}
\item[\rm(a)]
    for all $z\in U$,
    \[
    \liminf_{n\to\infty} \operatorname{dist}(f^{n}(z),\partial U_{n})>0,
    \]
    that is, all orbits stay away from the boundary;
\item[\rm(b)] there exists a subsequence $n_k\to \infty$ for which $\operatorname{dist}(f^{n_k}(z),\partial U_{n_k})\to 0$ for all $z\in U$, while for a different subsequence $m_k\to\infty$ we have that
    \[\liminf_{k \to \infty} \operatorname{dist}(f^{m_k}(z),\partial U_{m_k})>0, \quad\text{for }z\in U;\]
\item[\rm(c)] $\operatorname{dist}(f^{n}(z),\partial U_{n})\to 0$ for all $z\in U$, that is, all orbits converge to the boundary.
\end{itemize}
\end{ex}
\begin{proof}

(a) Let $b_n(z)=z^2$, for $n\in \N$, and apply Theorem~\ref{thm:main construction} with the points $z_1 = 0$ and $z_2=1/2$. For $n \in \N$, we have
\begin{eqnarray*}
\operatorname{dist}_{D_{{n}}}(f^n(0),f^n(1/2))&  = & \operatorname{dist}_{\mathbb{D}}(F_n(0),F_n(1/2))\\
& = & \operatorname{dist}_{\mathbb{D}}(B_n(0),B_n(1/2))\\
& = & \operatorname{dist}_{\mathbb{D}}(0,1/2^{2^n}) \rightarrow 0  \;\text{ as}\;n\to \infty.
\end{eqnarray*}
It follows from~\eqref{eqtn:double inequality} that
 \[
 \operatorname{dist}_{U_{n}}(f^n(0),f^n(1/2)) \to 0\;\text{as}\;n\to \infty.
 \]
 By Theorem A, this is sufficient to show that $U_0$ is contracting. Since $B_n(0) = 0$, for $n \in \N$, the result now follows from case (a) of Lemma~\ref{bnboundary}.

 %

(b) In this case, for $n\in \N$, we let

$$b_n(z)=
\begin{cases}
z^2 & \text{if $n=3k-2$, $k\geq 1$},\\
\mu_k(z), & \text{if $n=3k-1$,  $k\geq 1$},\\
\mu_k^{-1}(z), & \text{if $n=3k$, $k\geq 1$},
\end{cases}
$$
where $\mu_k$ is as defined in~\eqref{mun}. As in case~(a), we apply Theorem~\ref{thm:main construction} with $z_1 = 0$ and $z_2=1/2$. For $k \in \N$, we have
\begin{eqnarray*}
 \operatorname{dist}_{D_{3k}}(f^{3k}(0),f^{3k}(1/2)) & = & \operatorname{dist}_{\mathbb{D}}(F_{3k}(0),F_{3k}(1/2))\\
 & = & \operatorname{dist}_{\mathbb{D}}(B_{3k}(0),B_{3k}(1/2))\\
& = & \operatorname{dist}_{\mathbb{D}}(0,1/2^{2^ k}) \rightarrow 0  \;\text{ as}\;k\to \infty.
\end{eqnarray*}
 As in case~(a), this is sufficient to show that $U_0$ is contracting. Since $B_{3k}(0) = 0$, for $k \in \N$, and $B_{3k-1}(0)=a_{3k-1} \to 1$ as $k \to \infty$, the conclusion now follows from case (b) of Lemma~\ref{bnboundary}.

  (c) In this case we let $b_n(z)=b(z) =\left(\frac{z+1/3}{1+z/3}\right)^2$, for $n \in \mathbb{N}$, and we apply Theorem~\ref{thm:main construction} with $z_1 = 0$ and $z_2 = b(0)$. For $n \in \N$, we have
  \begin{eqnarray*}
  \operatorname{dist}_{D_{n}}(f^n(0),f^n(b(0))) & = &\operatorname{dist}_{\D}(F_n(0),F_n(b(0)))\\
  & = & \operatorname{dist}_{\D}(B_n(0),B_n(b(0)))\\
  & = & \operatorname{dist}_{\D}(b^n(0),b^{n+1}(0)) \to 0 \;\text{as}\;n\to \infty,
  \end{eqnarray*}
  by Lemma \ref{lem:onethird}(c). As before, this is sufficient to show that $U_0$ is contracting. It also follows from Lemma~\ref{lem:onethird}(a) that $B_n(0) = b^n(0) \to 1$ as $n \to \infty$ and the result now follows from case~(c) of Lemma~\ref{bnboundary}
  \end{proof}

\begin{rem*}
All three cases of Example 1 are in fact super-contracting (see Definition \ref{def:strongly contracting}).
\end{rem*}

\begin{ex}[\bf Three semi-contracting wandering domains]
For each of the cases~(a),~(b) and~(c) of Theorem C, there exists a transcendental entire function~$f$ having a sequence of bounded, simply connected, escaping,  semi-contracting, wandering domains $(U_n)$ with the stated behaviour.
\end{ex}
\begin{proof}
%

  (a) In this case we let $b_n(z) = \tilde{\mu}_n ((\mu_n(z))^2)$, for $n \in \N$, where $\mu_n$ and $\tilde{\mu}_n$ are as defined in~\eqref{mun}. We apply Theorem~\ref{thm:main construction} with the points $z_1 = 0$ and $z_2=1/2$

A calculation shows that, for $n \in \N$, we have $b_n(0) = 0$ and $b_n'(0)= \frac{2a_n}{1+a_n^2}\to 1$ as $n \to \infty$. Hence we can choose $(a_n)$ so that, in addition,
\begin{equation}\label{eqtn:conditions on an}
\sum_{n=1}^{\infty} (1-b_n'(0)) <\infty.
\end{equation}

It follows from Theorem~\ref{Thm zero}(b) that $B_n(1/2) \nrightarrow 0$ as $n \to \infty$. Thus
\begin{eqnarray*}
\operatorname{dist}_{D_{{n}}}(f^n(0),f^n(1/2))&  = & \operatorname{dist}_{\mathbb{D}}(F_n(0),F_n(1/2))\\
& = & \operatorname{dist}_{\mathbb{D}}(B_n(0),B_n(1/2))\\
& = & \operatorname{dist}_{\mathbb{D}}(0,B_n(1/2)) \nrightarrow 0  \;\text{as}\;n\to \infty.
\end{eqnarray*}

It follows from~\eqref{eqtn:double inequality} that
 \[
 \operatorname{dist}_{U_{n}}(f^n(0),f^n(1/2)) \nrightarrow 0\;\text{ as}\;n\to \infty,
 \]
and so $U_0$ is not contracting.
 Also, for $n \in \N$, the Blaschke product $b_n$ has degree 2 and so, by Theorem~\ref{thm:main construction} part (iv), $f: U_{n-1}\to U_n$ has degree 2.  Thus $U_0$ is not eventually isometric and so it follows from Theorem A that $U_0$ is semi-contracting.

 Since $B_n(0) = 0$, for $n \in \N$, the result now follows from case (a) of Lemma~\ref{bnboundary}.

(b) In this case, for $n \in \N$, we let

$$
b_n(z)=
\begin{cases}
\mu_k(z), & \text{if $n=3k-2$, $k\geq 1$},\\
z^2, & \text{if $n=3k-1$, $k\geq 1$},\\
\tilde{\mu}_k(z), & \text{if $n=3k$, $k\geq 1$},
\end{cases}
$$
where  $\mu_k$ and $\tilde{\mu}_k$ are as defined in~\eqref{mun}. Note the similarity to case~(a), where each Blaschke product $b_n$ was defined to be the composite of the three maps above.

As in case~(a), we apply Theorem~\ref{thm:main construction} with $z_1 = 0$ and $z_2=1/2$. Using similar arguments to those used in part (a), we can choose $(a_n)$ such that
\[
 \operatorname{dist}_{U_{3k}}(f^{3k}(0),f^{3k}(1/2)) \nrightarrow 0\;\text{ as}\;k\to \infty,
 \]
and so $U_0$ is not contracting. Also, for $k \in \N$, the Blaschke product $b_{3k-1}$ has degree 2 and so, by Theorem~\ref{thm:main construction} part (iv), $f: U_{3k-2}\to U_{3k-1}$ has degree 2.  Thus $U_0$ is not eventually isometric and so it follows from Theorem A that $U_0$ is semi-contracting.

Since $B_{3k}(0) = 0$, for $k \in \N$, and $B_{3k-2}(0)=a_{3k-2} \to 1$ as $k \to \infty$, the conclusion now follows from case (b) of Lemma~\ref{bnboundary}.

 (c) In this case we choose $a > 1/3$ and let $b_n(z)=b(z) = \left(\frac{z+a}{1+az}\right)^2$,  for $n \in \mathbb{N}$. As in Example 1(c), we apply Theorem~\ref{thm:main construction} with $z_1 = 0$ and $z_2 = b(0)$. For $n \in \N$, we have
\begin{align*}
\operatorname{dist}_{D_{n}}(f^n(0),f^n(b(0))) & = \operatorname{dist}_{\mathbb{D}}(F_n(0),F_n(b(0)))\\
& =  \operatorname{dist}_{\D}(B_n(0),B_n(b(0)))\\
& =  \operatorname{dist}_{\D}(b^n(0),b^{n+1}(0)) \nrightarrow 0 \;\text{ as}\;n\to \infty,
\end{align*}
by Lemma \ref{lem:onethird}(b). Arguing as above, this is sufficient to show that $U_0$ is not contracting. Also, for $n \in \N$, the Blaschke product $b_n$ has degree 2 and so, as above, it follows that $U_0$ is not eventually isometric. Hence, by Theorem~A, it is semi-contracting.

It also follows from Lemma~\ref{lem:onethird}(a) that $B_n(0) = b^n(0) \to 1$ as $n \to \infty$ and the result now follows from case~(c) of Lemma~\ref{bnboundary}
\end{proof}

\begin{ex}[\bf Three eventually isometric wandering domains]
For each of the cases (a), (b) and (c) of Theorem~C, there exists a transcendental entire function~$f$ having a sequence of bounded, simply connected, escaping, eventually isometric, wandering domains $(U_n)$ with the stated behaviour.
\end{ex}

\begin{proof}

(a) In this case we let $b_n(z)=z$, for $n \in \N$, and apply Theorem~\ref{thm:main construction} with $z_1 = 0$. For $n \in \N$, the map $b_n$ is univalent and so, by Theorem~\ref{thm:main construction} part (iv), $f: U_{n-1}\to U_n$ is also univalent. Thus $U_0$ is eventually isometric.

Since $B_n(0) = 0$, for $n \in \N$, the result now follows from case (a) of Lemma~\ref{bnboundary}.

(b) In this case, for $n \in \N$, we let

$$b_n(z)=
\begin{cases}
\mu_k(z), & \text{if $n=2k-1$, $k\geq 1$},\\
\mu_k^{-1}(z), & \text{if $n=2k$, $k\geq 1$},
\end{cases}
$$
where $\mu_k$ is as defined in~\eqref{mun}. We apply Theorem~\ref{thm:main construction} with $z_1 = 0$. For $n \in \N$, the map $b_n$ is univalent and so, as in case (a), $U_0$ is eventually isometric.

Since $B_{2k}(0) = 0$, for $k \in \N$, and $B_{2k-1}(0)=a_{2k-1} \to 1$ as $k \to \infty$, the conclusion now follows from case (b) of Lemma~\ref{bnboundary}.

(c) In this case, for $n \in \N$, we let $b_n(z) = \frac{z + 1/2}{1 + z/2}$. As in cases (a) and (b), we apply Theorem~\ref{thm:main construction} with $z_1 = 0$ and, since the map $b_n$ is univalent, for $n \in \N$, we deduce that $U_0$ is eventually isometric. Since $b_n(x) > x$, for $x \in [0,1)$, we deduce that $B_n(0) \to 1$ as $n \to \infty$. The conclusion now follows from case~(c) of Lemma~\ref{bnboundary}.
 \end{proof}


\subsection{$2-$super--attracting wandering domains}
We prove part (b) of Theorem~E by  giving an example of a transcendental entire function with a sequence of wandering domains $(U_n)$ containing two orbits consisting of critical points. The case of finitely many critical orbits is completely analogous.

We choose the sequence of Blaschke products $(b_n)$. We let
$$b_1(z)= z^2 \frac{z+a_1}{1+a_1z},$$
with $a_1<0$ chosen so that $1/2$ is a critical point, and, for $n \in \N$ define $b_{n+1}$ inductively by setting
$$b_{n+1}(z)= z^2 \frac{z+a_{n+1}}{1+a_{n+1}z},$$
with  $a_{n+1}\in (-1,1)$  chosen so that $b_n\circ b_{n-1}\circ\cdots\circ b_1(1/2)$ is a critical point of $b_{n+1}$.  In this way we construct a sequence of Blaschke products $(b_n)$ of degree 3 such that, for $n \in \N$, we have  $b_n(0)=0$ and the two critical points of $b_n$ are $0, B_{n-1}(1/2)$.

We now apply Theorem \ref{thm:main construction} with $z_1 = 0$ and $z_2 = 1/2$. We deduce that there exists a transcendental entire function $f$ which has a sequence of bounded, simply connected, escaping, wandering domains $(U_n)$ such that, for $n \geq 0$, $f^n(0) = F_n(0) = 4n$, $f^n(1/2)= F_n(1/2)$ and $f'(f^n(0)) = f'(f^n(1/2))=0$. Hence, there are two points in $U_0$, namely 0 and 1/2, whose orbits under $f$ consist of critical points of $f$.

\begin{rem*}
It follows from Theorem \ref{thm:main construction} that $\operatorname{dist}_{U_n}(f^n(0),f^n(1/2)) \to 0$ as $n \to \infty$ and, in fact, one can check that these wandering domains are super-contracting.
\end{rem*}


\section{Wandering domains whose boundaries are Jordan curves}\label{sec:Jordan curves}

In this section we prove that, if the Blaschke products in Theorem~\ref{thm:main construction} satisfy certain conditions, 
then the boundaries of the resulting wandering domains are Jordan curves. Apart from wandering domains arising from lifting constructions, as far as we are aware these are the first examples of simply connected wandering domains for which it is possible to obtain information  concerning  the boundary.  Examples of {\it multiply} connected wandering domains  for which it is known that connected components of the  boundary are Jordan curves can be found in \cite{Bish1} and in \cite{Baumgartner}.

In order for the boundaries of the resulting wandering domains to be Jordan curves, we need the Blaschke products in Theorem~\ref{thm:main construction} to be  uniformly expanding in the following precise sense.

\begin{defn}[Uniformly expanding Blaschke products]\label{defn:uniformly expanding} Let $(b_n)$ be a sequence of Blaschke products. We say that the Blaschke products in the sequence  $(b_n)$ are \emph{uniformly expanding } if there exists  $\xi>1$ and  an $\epsilon$-neighborhood $U_\epsilon$ of the unit circle such that
\begin{enumerate}
\item each $b_n$ is holomorphic  in $U_\epsilon$, that is, $b_n$ has no poles in $U_\epsilon$;
\item\label{expansivity} $|b_n'|\geq\xi$ on $U_\epsilon$.
\end{enumerate}
\end{defn}
Note that the second  condition  implies that the $b_n$ have no critical points in $U_\epsilon$.\\

Our aim for this section is to prove the following theorem.
\begin{thm}\label{thm:boundaries are Jordan curves} Let $(b_n)$ be a sequence of uniformly expanding Blaschke products such that $\max_n\{{\rm deg}\, b_n\} < \infty$ and let $(U_n)$ be the resulting orbit of wandering domains given by  Theorem~\ref{thm:main construction}. Then, for $n \geq 0$, the boundary of the wandering domain $U_n$ is a Jordan curve.
\end{thm}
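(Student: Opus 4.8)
The plan is to pass to the unit circle, where the construction puts the boundaries $\partial U_n$ into ever‑thinner annuli and the maps become small perturbations of the uniformly expanding Blaschke products, and then to exploit that expansion to force the boundaries to be Jordan curves.

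\medskip\noindent\textbf{Step 1: reduction and uniform expansion near the circle.} Conjugate by the translations $T_n\colon z\mapsto z+4n$, put $V_n:=U_n-4n$ and $g_n:=T_{n+1}^{-1}\circ f\circ T_n$, so that $g_n\colon V_n\to V_{n+1}$ is proper of degree $d_{n+1}=\deg b_{n+1}\le d:=\max_m\deg b_m<\infty$, $\partial U_n=\partial V_n+4n$, and (by Theorem~\ref{thm:main construction}(i)--(ii)) $\overline{D(0,r_n)}\subset V_n\subset D(0,R_n)$ with $r_n\uparrow1$, $R_n\downarrow1$ and $g_n=b_{n+1}+\eta_n$ on $\overline{D(0,R_n)}$ with $\|\eta_n\|_\infty\to0$. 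Here I use the freedom in the construction behind Theorem~\ref{thm:main construction}: the only constraints on $r_0,R_0$ there are upper bounds on $1-r_0$, $R_0-1$ and a closeness‑to‑$1$ requirement for the preimage counting, so we may in addition demand $\overline{D(0,R_0)}\setminus D(0,r_0)\subset U_\epsilon$, whence $\overline{D(0,R_n)}\setminus D(0,r_n)\subset U_\epsilon$ for \emph{every} $n$ (this handles small~$n$). Also I would shrink the approximation errors $\varepsilon_{n+1}$ and approximate $f_{n+1}$ on a disk slightly larger than $\overline{\Delta_n}$ (legitimate, since $b_{n+1}$ is pole‑free on a neighbourhood of $\overline{D(0,R_n)}$), so that Cauchy's estimate gives $\|\eta_n\|_{C^1(A_n)}\to0$ on the annulus $A_n:=\overline{D(0,R_n)}\setminus D(0,r_n)\supset\partial V_n$. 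Consequently there are $\xi'\in(1,\xi)$ and $\rho_0>0$, \emph{independent of $n$}, with $|g_n'|\ge\xi'$ on $A_n$, and such that every local inverse branch of $g_n$ at a point of $\partial V_{n+1}$ is defined on a disk of radius $\ge\rho_0$ and contracts Euclidean distance there by a factor $\le1/\xi'$; in particular $g_n$ has no critical point on $\partial V_n$, and $g_n|_{\partial V_n}\colon\partial V_n\to\partial V_{n+1}$ is a $d_{n+1}$‑to‑one proper local homeomorphism, hence a finite covering map.

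\medskip\noindent\textbf{Step 2: local connectivity (continuous extension of the Riemann map).} It suffices to show that for each $n$ the Riemann map $\psi_n\colon\D\to V_n$ (normalised by $\psi_n(0)=0$) extends to a homeomorphism of $\overline{\D}$ onto $\overline{V_n}$; by Carath\'eodory's theorem this holds once (i) $\partial V_n$ is locally connected and (ii) $\partial V_n$ has no cut point. For (i) the plan is to prove that each $\partial V_n$ is rectifiable, with length bounded uniformly in $n$; this immediately gives local connectivity of $\partial V_n$. Rectifiability is propagated \emph{along the orbit}: using $\psi_{n+1}^{-1}\circ g_n\circ\psi_n=B_n$, a finite Blaschke product of degree $d_{n+1}$, together with the uniform expansion of $g_n$ on $A_n$ and its fixed‑size contracting inverse branches there, one converts information about $\partial V_{n+1}$ into information about $\partial V_n$. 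The key technical device is a lemma of independent interest bounding the Euclidean lengths of the vertical geodesics of an annulus whose two boundary components have finite length; it is applied to the thin annuli $V_n\setminus\overline{D(0,r_n)}$ and $D(0,R_n)\setminus\overline{V_n}$, whose inner (resp. outer) boundary components are the round circles $\gamma_n,\Gamma_n$ of bounded length, and, fed into an induction along $n$ that uses the uniform expansion and $\operatorname{diam}\partial V_{n+k}\le2R_{n+k}\le3$, it yields the uniform length bound (and, in the same step, a modulus of continuity for the boundary extensions of $\psi_n$ that is uniform in $n$).

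\medskip\noindent\textbf{Step 3: no cut points.} Suppose some $\partial V_{n_0}$ had a cut point. Since $\partial V_{n_0}$ is now known to be a locally connected continuum bounding a simply connected domain, a standard argument produces from such a cut point a nondegenerate \emph{buried} sub‑continuum $K\subset\partial V_{n_0}\cap A_{n_0}$: a one‑sided collar of $K$ lies in $\overline{V_{n_0}}$ and is separated from the exterior $\Omega_{n_0}:=\C\setminus\overline{V_{n_0}}$. Because each $g_m$ is a local homeomorphism near $\partial V_m\subset U_\epsilon$ carrying the $V_m$‑side to the $V_{m+1}$‑side and the $\Omega_m$‑side to the $\Omega_{m+1}$‑side, the image $g^{k}(K)\subset\partial V_{n_0+k}$ is again buried and again lies in the thin annulus $A_{n_0+k}\subset U_\epsilon$; yet by uniform expansion $\operatorname{diam}\big(g^{k}(K)\big)\gtrsim(\xi')^{k}\operatorname{diam}(K)$, which exceeds any fixed constant once $k$ is large. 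A buried continuum of definite diameter inside $A_{n_0+k}$ forces a fjord of $\Omega_{n_0+k}$ reaching radius $<1-c$ for some fixed $c>0$, contradicting $\overline{D(0,r_{n_0+k})}\subset V_{n_0+k}$ once $r_{n_0+k}$ is close enough to $1$. Hence no $\partial V_n$ has a cut point, and combining Steps~2 and~3, $\psi_n$ extends to a homeomorphism $\overline{\D}\to\overline{V_n}$, so $\partial V_n$ — and therefore $\partial U_n=\partial V_n+4n$ — is a Jordan curve for every $n\ge0$.

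\medskip\noindent\textbf{The main obstacle.} The crux is the tension between the uniform, scale‑independent expansion constant $\xi$ and the annuli $A_n$ collapsing onto the unit circle, with $\partial V_n$ an \emph{a priori} wild continuum. Crude estimates go the wrong way: lengths of $\partial V_n$ may be multiplied by $\deg b_{n+1}$ under $g_n$ and one has no control of $\xi$ against $\max_m\deg b_m$, so naive length or diameter bounds fail and one must instead work with vertical‑geodesic lengths / annulus moduli and carry out the orbit induction carefully — this is the delicate new input. Secondary but genuine points, already folded into Step~1, are that the Runge perturbations $\eta_n$ must not spoil the uniform $C^1$‑expansion (handled by shrinking the $\varepsilon_n$ and enlarging the approximation disks) and that the uniform expansion must be available for all $n$, including the smallest ones (handled by the choice of $r_0,R_0$).
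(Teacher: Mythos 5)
Your Step 1 matches the paper's setup (Cauchy estimates turn the uniform expansion of the $b_n$ into $|f'|\ge\eta>1$ on neighbourhoods of the annuli between $\gamma_n$ and $\Gamma_n$; the paper simply discards finitely many initial domains by relabelling rather than retuning $r_0,R_0,\varepsilon_n$), but the core of your argument, Steps 2 and 3, has genuine gaps. In Step 2 the uniform rectifiability of $\partial V_n$ is asserted with no mechanism. The vertical-geodesic length lemma (Theorem~\ref{lem:geometry of annuli}) requires \emph{both} boundary components of the annulus to be analytic Jordan curves of bounded length, whereas you propose to apply it to $V_n\setminus\overline{D(0,r_n)}$ and $D(0,R_n)\setminus\overline{V_n}$, one of whose boundary components is $\partial V_n$ itself --- a priori a wild continuum, not known to be a Jordan curve, analytic, or of finite length; this is circular. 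Moreover the ``orbit induction'' converting information about $\partial V_{n+1}$ into information about $\partial V_n$ is a backward induction with no base case, and you yourself note that pulling back lengths under a degree-$d$ map only gives a factor $d/\eta$, which need not be less than $1$; the proposal never resolves this. The paper avoids the issue entirely: it never proves local connectivity or rectifiability of $\partial U_n$. Instead it applies the geodesic-length lemma only to the annulus $\breve A_n$ bounded by the circle $\Gamma_n$ and the curve $\widehat\Gamma_n\subset f^{-1}(\Gamma_{n+1})$, which \emph{is} an analytic Jordan curve of length at most $3\pi d/\eta$ (by expansion and bounded degree), then pulls these geodesics back under $f^{-n}$ to build, as geometrically convergent limits, closed curves $\Sigma(\Gamma_0)$ and $\sigma(\gamma_0)$, shows $\partial U_0\subset\Sigma(\Gamma_0)\cap\sigma(\gamma_0)$ by an expansion-plus-Bloch argument, and concludes that every boundary point is accessible from both complementary components, whence $\partial U_0$ is a Jordan curve by Newman's criterion.

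In Step 3 the final contradiction is unsound. Granting local connectivity, the passage from a cut point to a nondegenerate buried subcontinuum $K$ is fine, and uniform local expansion does force $\operatorname{diam}\bigl(g^k(K)\bigr)$ to reach a definite size (note it only grows up to the injectivity scale, not like $(\xi')^k\operatorname{diam}K$ indefinitely). But a buried continuum of definite diameter inside the thin annulus $A_{n_0+k}$ does \emph{not} force ``a fjord of $\Omega_{n_0+k}$ reaching radius $<1-c$'': buried means precisely that $K$ does not meet the boundary of the unbounded complementary component, so, for example, a slit-like arc running along inside the thin annulus with $V$ on both sides is perfectly compatible with $\overline{D(0,r_{n_0+k})}\subset V_{n_0+k}\subset D(0,R_{n_0+k})$ and produces no contradiction. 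Ruling out exactly this slit/non-accessible behaviour is the heart of the matter, and it requires a dynamical accessibility argument of the kind the paper gives (every boundary point is the endpoint of an exterior curve built from pulled-back vertical geodesics, with the Bloch-type argument showing no boundary point is missed), not just diameter growth of the buried set. As it stands, both the local-connectivity step and the no-cut-point step are unproved, so the proposal does not yet constitute a proof.
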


The proof of Theorem~\ref{thm:boundaries are Jordan curves} follows in outline the proof that the Julia sets of certain quadratic polynomials are Jordan curves (see \cite[Section 9.9]{Beardon}, for example) but with significant adjustments and additional arguments due to the fact that we are dealing with a lack of uniformity arising from the associated non-autonomous system of maps.

The proof has three steps:
\begin{itemize}
\item[1.]
For each $n \in \N$, we let $A_n$ be the annulus bounded by the circles $\gamma_n$ and $\Gamma_n$, which were defined in~\eqref{gammaGamma} and played a key role in the proof of Theorem~\ref{thm:main construction}. We consider the annulus $\widehat A_n$ lying in $A_n$ between $\Gamma_n$ and a component of $f^{-1}(A_{n+1})$ and show that the vertical geodesics of $\widehat A_n$ have uniformly bounded Euclidean length.

\item[2.] We then use pullbacks under $f^{-n}$ of these vertical geodesics together with the uniformly expanding property of the functions $b_n$ (and hence of~$f$ on the annuli $A_n$) to induce a continuous map $\Sigma$ from $\Gamma_0$ to a closed curve $\Sigma(\Gamma_0)$, and a continuous map $\sigma$ from $\gamma_0$ to a closed curve $\sigma(\gamma_0)$.

\item[3.] Finally, we show that $\partial U_0$, which is squeezed between $\Sigma(\Gamma_0)$ and  $\sigma(\gamma_0)$, is a Jordan curve.
\end{itemize}

The first step in the proof relies on a general geometric result of independent interest about the Euclidean lengths of vertical geodesics of annuli, which we prove using the Fej\'er-Riesz inequality. We prove this and other preliminary geometric results in Section 7.1, and then give the proof of Theorem~\ref{thm:boundaries are Jordan curves} in Section~7.2.

\subsection{Preliminary results}
We begin with a result about pre-images of annuli bounded by Jordan curves. Related results appear in \cite[Lemma~11.1]{Bish1} and \cite[Lemma~5]{Slow}.

In this lemma, we denote the inner boundary component of a topological annulus~$A$ by $\partial A_{{\rm inn}}$ and the outer boundary component by $\partial A_{{\rm out}}$.   As usual, when we say that $f$ is holomorphic on $\overline{A}$ we mean that $f$ is holomorphic on a neighborhood of $\overline{A}$.

\begin{lem}\label{lem:preimages of Jordan} Let~$A$ and~$B$ be annuli with Jordan curve boundary components, both surrounding~$0$, and let~$f$ be holomorphic on $\overline A$, with $f$ and $f'$ non-zero on~$\overline A$. Suppose that
\begin{equation}\label{inn-out}
\partial B_{{\rm inn}}\;\text{ surrounds }\; f(\partial A_{{\rm inn}})\quad\text{and}\quad f(\partial A_{{\rm out}})\; \text{ surrounds }\;  \partial B_{{\rm out}}.
\end{equation}
Then~$A$ contains a unique component~$\widehat A$ of $f^{-1}(B)$, which is an annulus that surrounds~$0$ with Jordan curve boundary components that satisfy
\begin{equation}\label{Jordan}
f(\partial \widehat A_{{\rm inn}})= \partial B_{{\rm inn}}\quad\text{and} \quad f(\partial \widehat A_{{\rm out}})= \partial B_{{\rm out}}.
\end{equation}
\end{lem}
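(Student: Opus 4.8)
The plan is to work in the annular setting directly, using the fact that $f$ is a non-vanishing holomorphic map with non-vanishing derivative on $\overline{A}$, so that $f$ is locally biholomorphic and behaves like a covering on the relevant region. First I would observe that because $f'\neq 0$ on $\overline{A}$, the map $f$ restricted to the open annulus $A$ is an open map which is a local homeomorphism, and since $f\neq 0$ on $\overline{A}$ the image $f(A)$ avoids $0$. The hypothesis \eqref{inn-out} says that $f(\partial A_{{\rm inn}})$ is surrounded by $\partial B_{{\rm inn}}$ and $f(\partial A_{{\rm out}})$ surrounds $\partial B_{{\rm out}}$; hence neither boundary circle of $A$ is mapped into $\overline{B}$, while (by connectedness of $f(A)$ together with an argument-principle/winding-number count, using that $f(A)$ separates $0$ from $\infty$ in the right way) the region $B$ is contained in $f(A)$. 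From this it follows that every point of $B$ has at least one preimage in $A$, so $f^{-1}(B)\cap A$ is a nonempty open subset of $A$ whose closure does not meet $\partial A$.

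Next I would identify the component structure. Let $\widehat{A}$ be the union of those components of $f^{-1}(B)\cap A$ whose closure meets neither $\partial A_{{\rm inn}}$ nor $\partial A_{{\rm out}}$ — in fact by the previous paragraph \emph{every} component has this property, but the key point is to show there is exactly one component and that it is an annulus separating $0$ from $\infty$. For this I would use a standard winding-number / degree computation: travel once around $\partial A_{{\rm out}}$; since $f(\partial A_{{\rm out}})$ surrounds $\partial B_{{\rm out}}$ and hence surrounds $B$ and $0$, and $f(\partial A_{{\rm inn}})$ is surrounded by $\partial B_{{\rm inn}}$ and hence does not surround any point of $B$, the number of preimages in $A$ (counted with multiplicity, which here is $1$ since $f'\neq 0$) of a point $w\in B$ equals $\frac{1}{2\pi i}\oint_{\partial A_{{\rm out}}}\frac{f'(z)}{f(z)-w}\,dz - \frac{1}{2\pi i}\oint_{\partial A_{{\rm inn}}}\frac{f'(z)}{f(z)-w}\,dz$, which by \eqref{inn-out} equals $\deg(f|_{\partial A_{{\rm out}}}, w) - \deg(f|_{\partial A_{{\rm inn}}}, w)$, a constant $N\geq 1$ independent of $w\in B$. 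Then $f:\widehat{A}\to B$ is a proper holomorphic covering of degree $N$; since $B$ is an annulus (homotopy type of a circle) and $f'\neq 0$, $\widehat{A}$ is itself an annulus and $f$ restricted to it is an $N$-fold covering. The boundary relation \eqref{Jordan} then follows because $\partial\widehat{A}$ must be an $N$-fold cover of $\partial B_{{\rm inn}}\cup\partial B_{{\rm out}}$ respecting the nesting (the inner boundary of $\widehat{A}$, which together with $\partial A_{{\rm inn}}$ bounds a subannulus of $A$ disjoint from $f^{-1}(B)$, maps onto $\partial B_{{\rm inn}}$, and similarly for the outer boundary).

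To get uniqueness of the component $\widehat{A}$, I would argue that if there were two distinct components $\widehat{A}_1,\widehat{A}_2$ of $f^{-1}(B)\cap A$, then each would separate $\partial A_{{\rm inn}}$ from $\partial A_{{\rm out}}$ in $A$ (otherwise its image under $f$ would be simply connected, forcing $f$ restricted to it to be a branched cover of $B$ of degree $0$, impossible since $B$ is not simply connected and $f'\neq 0$ rules out the branched case anyway). But two disjoint subannuli of $A$ each separating the two boundary components must be nested, and then the subannulus of $A$ between them, being disjoint from $f^{-1}(B)$, would have to map into a component of $\C^*\setminus B$; yet its two boundary curves map into $\partial B_{{\rm inn}}$ and $\partial B_{{\rm out}}$ respectively (by the degree count, each $\widehat{A}_i$ surjects onto all of $\partial B$), which lie in \emph{different} complementary components of $B$ — a contradiction. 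Hence there is a unique such component.

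Finally, that the boundary components of $\widehat{A}$ are Jordan curves: since $f$ has non-vanishing derivative on a neighborhood of $\overline{A}\supset\overline{\widehat{A}}$ and maps $\partial\widehat{A}_{{\rm inn}}$ onto the Jordan curve $\partial B_{{\rm inn}}$ as a covering, $\partial\widehat{A}_{{\rm inn}}$ is a compact $1$-manifold (locally $f$ is a homeomorphism onto its image) covering a circle, hence a finite disjoint union of circles; connectedness of this boundary component (which follows from $\widehat{A}$ being an annulus) forces it to be a single circle, i.e.\ a Jordan curve, and likewise for $\partial\widehat{A}_{{\rm out}}$. The main obstacle I anticipate is the bookkeeping in the uniqueness argument and in pinning down that $\widehat{A}$ genuinely separates $0$ from $\infty$ rather than being, say, a topological disk; this is where one has to be careful to use \eqref{inn-out} in full (both the inner and outer surrounding conditions) rather than just the existence of preimages. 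Everything else is a routine application of the argument principle and the fact that a non-constant holomorphic map with non-vanishing derivative is an open local homeomorphism.
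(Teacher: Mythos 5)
Your overall strategy (argument principle for the preimage count, unbranched covering structure of each preimage component, nesting of essential subannuli for uniqueness) is close in spirit to the paper's proof, but two steps have genuine gaps, and both occur exactly where the paper exploits the hypothesis $f\neq 0$ on $\overline A$ via bounded complementary components. First, your claim that every component of $f^{-1}(B)\cap A$ separates $\partial A_{\mathrm{inn}}$ from $\partial A_{\mathrm{out}}$ is justified only by the parenthetical ``otherwise its image under $f$ would be simply connected, forcing $f$ restricted to it to be a branched cover of $B$ of degree $0$''. This is not right as stated: by properness every component maps \emph{onto} $B$, so its image is never simply connected; and while a simply connected component is excluded by Riemann--Hurwitz (or your covering argument), a component could a priori be an annulus that does \emph{not} surround $0$ (inessential in $A$), and nothing you say excludes this. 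The missing idea is the paper's: if $X$ is a bounded complementary component of a preimage component $H$ with $0\notin X$, then $X\subset A$, and properness of $f:H\to B$ forces $f(X)$ to be the bounded complementary component of $B$, which contains $0$ --- contradicting $f\neq 0$ on $A$. Hence every component surrounds $0$.

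Second, in your uniqueness argument the assertion that the two boundary curves of the region between $\widehat A_1$ and $\widehat A_2$ map into $\partial B_{\mathrm{inn}}$ and $\partial B_{\mathrm{out}}$ \emph{respectively} does not follow from the cited fact that each $\partial\widehat A_i$ surjects onto $\partial B$: that fact does not determine \emph{which} boundary circle of $\widehat A_i$ maps to which boundary circle of $B$, and the scenario in which both facing curves map onto $\partial B_{\mathrm{inn}}$ (with the in-between region mapping into $\operatorname{int}\partial B_{\mathrm{inn}}$) is not excluded by anything you write. There is also a circularity: your justification of \eqref{Jordan} uses that the subannulus between $\partial A_{\mathrm{inn}}$ and $\partial\widehat A_{\mathrm{inn}}$ is disjoint from $f^{-1}(B)$, which presupposes the uniqueness you prove afterwards; and the in-between region in the uniqueness step is only disjoint from $f^{-1}(B)$ if the two components are chosen adjacent. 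Your scheme can be repaired by running the ``gap region'' argument on the innermost and outermost gaps, where \eqref{inn-out} pins down the end behaviour, but the paper's route is shorter: once every component surrounds $0$, two components would be nested, and the inner one would lie in the bounded complementary component of the outer one, whose image is the bounded complementary component of $B$ --- contradicting that the inner one maps onto $B$; the same mechanism then gives \eqref{Jordan} directly.
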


 \begin{proof}
Since $f(\partial A) \cap B= \emptyset$ we deduce that every connected component $H$ of $f^{-1}(B)$ which intersects $A$ is in fact contained in $A$. Since  $f(\partial A)$ intersects both components of $B^c$, we deduce that $f(A) \supset B$ and that there exists at least one component~$\hat A$ of $f^{-1}(B)$ that  is contained  in~$A$. We now claim that $\hat A $ is doubly connected, surrounds 0, and that there are no other preimage components of $B$ in $A$.

Let $H$ be any preimage component of $B$ which is contained in  $A$. By the Riemann--Hurwitz formula $H$  is at least doubly connected. Let $X$ be a bounded complementary component of $H$. Since $f:H\ra B$ is proper, $f(X)$ is the bounded complementary component of $B$, hence $0\in f(X)$. If $X$ does not contain $0$, then $X\subset A$, on which $f$ cannot take the value $0$ by assumption, giving a contradiction.
Hence every preimage component $H$ of $B$ in $A$ is doubly connected and surrounds $0$.
Now suppose that there are two such components. Then one of them is contained in the bounded complementary component of the other, and hence maps to the bounded complementary component of $B$, again a contradiction since it is a preimage component of~$B$.

Since the boundary components of $B$ are Jordan curves, the fact that $f:\hat A\ra B$ is proper, together with the fact that $f'\ne 0$ on $\overline A$, implies that the boundaries of $\hat A$ are also Jordan curves, and also implies~\eqref{Jordan}.
\end{proof}

The main result in this subsection concerns the notion of a {\it vertical foliation} of an annulus which we now define.

\begin{defn}[Vertical foliations] Let $A$ be an open annulus and consider  the straight annulus $\A_{\rho}=\{z:\rho<|z|<1\}$ such that $\phi:\A_{\rho}\ra A$ is a biholomorphism. The {\it vertical foliation} $\FF_A$ of $A$ consists of the image curves under~$\phi$ of the radial segments connecting the two circles which form  the boundary of $\A_\rho$. Each of these image curves is a hyperbolic geodesic which we refer to as a {\it vertical geodesic}.
\end{defn}

We can now state the main result of this subsection, which concerns the Euclidean lengths of geodesics in vertical foliations of annuli. The proof uses the Fej\'er-Riesz Inequality (stated below); similar reasoning using instead the Gehring-Hayman Theorem (see \cite{GH} or \cite[Section~4]{Pommerenke}) is possible.

\begin{thm}\label{lem:geometry of annuli}
Let $A$ be an annulus for which both  boundary components are analytic Jordan curves with length at most~$S$, and such that the bounded component of $\C\setminus A$ contains a disk of radius $r>0$. Then there exists $M=M(S,r)>0$ such that
$$\leucl(\gamma)\leq M, \text{\ \ \  for all $\gamma\in\FF_A$}.$$
\end{thm}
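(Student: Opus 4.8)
The plan is to transfer everything to the round annulus $\A_\rho=\{z:\rho<|z|<1\}$, writing $A=\phi(\A_\rho)$ for a conformal map $\phi$; since the boundary curves are analytic, $\phi$ extends analytically across $\partial\A_\rho$, with $\phi(\{|z|=1\})=\partial A_{{\rm out}}$, $\phi(\{|z|=\rho\})=\partial A_{{\rm inn}}$, and every leaf of $\FF_A$ has the form $\gamma_\theta=\phi(\{te^{i\theta}:\rho\le t\le1\})$, so $\leucl(\gamma_\theta)=\int_\rho^1|\phi'(te^{i\theta})|\,dt$, while $\int_{|z|=1}|\phi'|\,|dz|=\leucl(\partial A_{{\rm out}})\le S$ and $\int_{|z|=\rho}|\phi'|\,|dz|=\leucl(\partial A_{{\rm inn}})\le S$. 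The two boundary integrals are the only ``size'' information one has, and everything else is a matter of transporting them to the interior radial segment.

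The crucial preliminary — and the only place the disk of radius $r$ is used — is a bound on $\operatorname{mod}A$. Each boundary Jordan curve has length $\le S$, so the closed region it bounds has diameter $\le S/2$; fixing $p$ with $D(p,r)\subset\operatorname{int}\partial A_{{\rm inn}}$, the ring $A$ separates $D(p,r)$ from $\C\setminus D(p,S/2)$, so by the Gr\"otzsch extremal property of round annuli $\operatorname{mod}A\le\tfrac1{2\pi}\log\tfrac{S}{2r}$ (here $2r<S$ necessarily), i.e.\ $\rho\ge\rho_0:=2r/S>0$. Thus the annulus cannot be arbitrarily thin — precisely the degenerate regime in which a crossing geodesic could be long — and in particular $\sqrt\rho$, and the modulus of the outer half annulus, are bounded away from $1$ in terms of $S$ and $r$.

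Next I would bound $\leucl(\gamma_\theta)$ using the Fej\'er--Riesz inequality. Split $\gamma_\theta$ at the core circle $|z|=\sqrt\rho$; by the conformal involution $z\mapsto\rho/z$ of $\A_\rho$ (which swaps the two boundary circles) the inner half is an outer half for another conformal parametrisation of $A$ with $\partial A_{{\rm inn}}$ and $\partial A_{{\rm out}}$ interchanged, so it suffices to bound the outer half $\phi(\{\sqrt\rho\le t\le1\}e^{i\theta})$. This arc lies in the Jordan domain $\Omega:=\operatorname{int}\partial A_{{\rm out}}$, with $\leucl(\partial\Omega)\le S$, so the Riemann map $\beta:\D\to\Omega$ with $\beta(0)=p$ satisfies $\beta'\in H^1(\D)$ and $\|\beta'\|_{L^1(\partial\D)}=\leucl(\partial\Omega)\le S$; by Fej\'er--Riesz the $\beta$-image of any hyperbolic geodesic of $\D$ then has Euclidean length $\le\tfrac12\|\beta'\|_{L^1(\partial\D)}\le S/2$. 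The inner endpoint $\phi(\sqrt\rho\,e^{i\theta})$ of the outer half sits at a controlled position in $\Omega$: the ring in $\Omega$ between the core curve $C:=\phi(\{|z|=\sqrt\rho\})$ and $\partial\Omega$ is conformally $\{\sqrt\rho<|z|<1\}$, of modulus $\le\tfrac1{4\pi}\log\tfrac{S}{2r}$, so $\beta^{-1}(C)\subset\{|z|\le r_1\}$ with $r_1=r_1(S,r)<1$. Combining the Fej\'er--Riesz length bound with this uniform localisation of $\beta^{-1}(C)$, together with Lemma~\ref{lem:hyp density} and the classical conformal distortion theorem for $\beta$, then gives a bound $\leucl(\phi(\{\sqrt\rho\le t\le1\}e^{i\theta}))\le M(S,r)$ independent of $\theta$, and hence $\leucl(\gamma_\theta)\le 2M(S,r)$ for every $\theta$.

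I expect the main difficulty to be exactly this last conversion near $\partial A_{{\rm out}}$: a harmonic/Poisson majorant for $|\phi'|$ is hopelessly lossy there (its radial integral can even diverge), so one has to use genuinely that the outer half of $\gamma_\theta$ is — up to a piece of $C$ whose length is controlled by the localisation of $\beta^{-1}(C)$ — a hyperbolic geodesic, which is what lets the Fej\'er--Riesz estimate bite; equivalently one can invoke a version of the Gehring--Hayman theorem to compare it with a competing crosscut of $\Omega$ of length $\le S/2$. The remaining points — that the pieces obtained by cutting are Jordan domains, that $C$, $\phi^{-1}(C)$ and $\beta^{-1}(C)$ behave as claimed, and that the two halves of $\gamma_\theta$ reassemble — are routine bookkeeping using Lemmas~\ref{lem:shapiro} and~\ref{lem:hyp density}.
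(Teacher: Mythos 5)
Your overall framework (pass to the round model $\A_\rho$, bound $\operatorname{mod}A\le\tfrac{1}{2\pi}\log\tfrac{S}{2r}$ so that $\rho\ge 2r/S$, and use rectifiable boundary plus the Fej\'er--Riesz inequality) uses the right raw materials, but the two steps that actually produce the length bound have genuine gaps. First, the localisation claim is backwards: from $\operatorname{mod}A\le\tfrac{1}{2\pi}\log\tfrac{S}{2r}$ you get only an \emph{upper} bound on the modulus of the ring between the core curve $C$ and $\partial\Omega$, and an upper bound on the modulus of the ring separating $\beta^{-1}(C)$ from $\partial\D$ gives no confinement of $\beta^{-1}(C)$ away from $\partial\D$; you would need a \emph{lower} bound, and none exists because the hypotheses place no lower bound on $\operatorname{mod}A$. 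Concretely, take $\Omega=\D$, $A=\{1-\epsilon<|z|<1\}$, $p=0$, $r=1/2$, $S=2\pi$: then $\beta=\operatorname{id}$ and $\beta^{-1}(C)=C$ approaches $\partial\D$ as $\epsilon\to0$, so no $r_1=r_1(S,r)<1$ works. Second, and this is the step you yourself flag, Lemma~\ref{thm:FR} only controls $\beta$-images of hyperbolic geodesics of $\D$, i.e.\ hyperbolic geodesics of $\Omega$; the outer half of $\gamma_\theta$ is a geodesic of the annulus (equivalently of the half-annulus $\phi(\{\sqrt\rho<|z|<1\})$), not of $\Omega$, so the inequality does not apply to it. Neither proposed remedy closes this: the Gehring--Hayman theorem runs in the wrong direction (it bounds the length of the $\Omega$-geodesic joining two points by the length of any competing curve, so it can bound the $\Omega$-geodesic between your endpoints but never your non-geodesic arc), and the assertion that your arc is ``up to a piece of $C$'' an $\Omega$-geodesic is not established in any quantitative form (and the control of that piece of $C$ was to come from the faulty localisation). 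The involution reduction has a further problem: after swapping boundary components the enveloping domain becomes the exterior of $\partial A_{{\rm inn}}$, where the Riemann map's derivative is not in $H^1$, so the $S/2$ crosscut bound does not transfer to the inner half as stated.

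The paper's proof avoids geodesics of $\Omega$ altogether, and the device it uses is exactly what your plan is missing. It first proves an averaging lemma (Lemma~\ref{lem:most geodesics have bounded length}): integrating $|\phi'|$ over $\A_\rho$ and using that the circular means $I(r)=\int_0^{2\pi}|\phi'(re^{i\theta})|\,d\theta$ are convex in $\log r$ and bounded by $S/\rho$ at the two extreme radii, all but a set of angles of measure $1/2$ satisfy $\leucl(\sigma_\theta)\le \tfrac{2S}{\rho}(1-\rho)$; combined with your (correct) estimate $\rho\ge 2r/S$ this produces \emph{one} vertical geodesic $\sigma$ of length at most $L(S,r)$. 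Cutting $A$ along $\sigma$ and taking a branch of $\log$ (lengths distort only by factors between $2/S$ and $1/r$, since $A\subset\{r\le|z|\le S/2\}$) yields a simply connected domain whose boundary, consisting of the logs of the two boundary curves and two copies of $\log\sigma$, has length at most $2(L+S)/r$; Fej\'er--Riesz is then applied in this cut domain to the lift of an arbitrary vertical geodesic, giving $\leucl(\log\gamma)\le\tfrac12\leucl(\partial\log A)$ and hence $\leucl(\gamma)\le S(L+S)/(2r)$. In short: the ``find one short crosscut by averaging, cut the annulus, and take a logarithm'' step is the essential idea that turns the annulus problem into a simply connected one where Fej\'er--Riesz can be applied to the curves you actually need to measure; without it (or a worked-out substitute at your ``main difficulty''), your argument does not yield a bound depending only on $S$ and $r$.
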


We prove Theorem~\ref{lem:geometry of annuli} using the following technical lemma.

\begin{lem}\label{lem:most geodesics have bounded length}
Let $A$ be  an annulus for which both  boundary components are analytic Jordan curves with length at most~$S$, and consider the straight annulus $\A_{\rho}=\{z:\rho<|z|<1\}$ such that $\phi:\A_{\rho}\ra A$ is a biholomorphism.
For $\theta\in [0,2\pi]$, let $\sigma_\theta:=\phi(\{r e^{i\theta}: \rho<r<1\})$ and $\ell(\theta):=\leucl(\sigma_\theta)$.

Then the Lebesgue measure of the set of $\theta$ such that $\ell(\theta)<\frac{2S}{\rho}(1-\rho)$ is at least  $2\pi-\frac{1}{2}$. \end{lem}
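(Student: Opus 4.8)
The plan is to integrate the Euclidean length of the vertical geodesics $\sigma_\theta$ over $\theta$ and bound the total by a quantity involving $S$, then apply Chebyshev's inequality. Writing $\phi:\A_\rho\to A$ for the biholomorphism, the curve $\sigma_\theta$ is the image under $\phi$ of the radial segment $r\mapsto re^{i\theta}$, $\rho<r<1$, so
\[
\ell(\theta)=\leucl(\sigma_\theta)=\int_\rho^1 |\phi'(re^{i\theta})|\,dr.
\]
Integrating over $\theta\in[0,2\pi]$ and using Fubini,
\[
\int_0^{2\pi}\ell(\theta)\,d\theta=\int_\rho^1\!\!\int_0^{2\pi}|\phi'(re^{i\theta})|\,d\theta\,dr
=\int_\rho^1 \frac{1}{r}\left(\int_{|z|=r}|\phi'(z)|\,|dz|\right)dr
=\int_\rho^1 \frac{\leucl(\phi(\{|z|=r\}))}{r}\,dr.
\]
The image $\phi(\{|z|=r\})$ is an analytic Jordan curve in $A$ separating the two boundary components. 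The first key step is to show that its Euclidean length is at most $2S$, the sum of the lengths of the two boundary curves; this should follow because $\phi(\{|z|=r\})$ bounds, together with each boundary component, a subannulus, and a standard argument (e.g. via the Fej\'er--Riesz inequality applied to $\phi'$ on $\A_\rho$, as advertised in the statement of Theorem~\ref{lem:geometry of annuli}, or via a convexity/monotonicity property of the length of level curves of $\phi$) forces an interior level curve to be no longer than the longer boundary, hence at most $2S$. Granting this, since $1/r\le 1/\rho$ on $[\rho,1]$,
\[
\int_0^{2\pi}\ell(\theta)\,d\theta\le \frac{2S}{\rho}\,(1-\rho).
\]

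With the bound on the integral in hand, the rest is Chebyshev. Let $E=\{\theta\in[0,2\pi] : \ell(\theta)\ge \tfrac{2S}{\rho}(1-\rho)\}$. If $\operatorname{Leb}(E)=m$, then
\[
\frac{2S}{\rho}(1-\rho)\ge \int_0^{2\pi}\ell(\theta)\,d\theta\ge \int_E \ell(\theta)\,d\theta\ge m\cdot \frac{2S}{\rho}(1-\rho)\cdot\frac{1}{?},
\]
which as stated would only give $m\le 1$; to obtain the sharper constant $\tfrac12$ claimed in the lemma I would instead use the threshold exactly as written and observe that the set where $\ell(\theta)\ge \tfrac{4S}{\rho}(1-\rho)$ has measure at most $\tfrac12$ by Chebyshev, so that some care with the precise constant in the length bound is needed — most likely the intended argument bounds $\leucl(\phi(\{|z|=r\}))\le S$ (the longer of the two boundary lengths, both $\le S$) rather than $2S$, giving $\int_0^{2\pi}\ell(\theta)\,d\theta\le \tfrac{S}{\rho}(1-\rho)\cdot 2\pi/(2\pi)$; then $\operatorname{Leb}\{\theta:\ell(\theta)\ge \tfrac{2S}{\rho}(1-\rho)\}\le \tfrac{S(1-\rho)/\rho}{2S(1-\rho)/\rho}\cdot 2\pi = \pi$, still not $\tfrac12$. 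I therefore expect the genuine route to be: bound $\leucl(\phi(\{|z|=r\}))\le S$, obtain $\int_0^{2\pi}\ell(\theta)\,d\theta\le \tfrac{S}{\rho}(1-\rho)$ directly (no factor $2\pi$, since the $d\theta$-integral of $|\phi'(re^{i\theta})|$ equals the length divided by... — here one must track the parametrization carefully), and then Chebyshev with threshold $\tfrac{2S}{\rho}(1-\rho)$ gives measure at most $\tfrac12$, whence the complement has measure at least $2\pi-\tfrac12$.

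The main obstacle, and the step I would spend the most effort on, is establishing the length estimate $\leucl(\phi(\{|z|=r\}))\le S$ (or the appropriate constant multiple) for every interior radius $r\in(\rho,1)$. The natural tool is the Fej\'er--Riesz inequality: after passing to a suitable representation of $\phi'$ on the straight annulus $\A_\rho$ and decomposing into powers of $z$, the $L^1$-norm of $\phi'$ on an interior circle is controlled by its $L^1$-norms on the two boundary circles, which are exactly the Euclidean lengths of the two boundary Jordan curves of $A$ (legitimate because those boundary curves are analytic, so $\phi$ extends analytically across $\partial\A_\rho$ and $\phi'\in L^1$ of each boundary circle). Once this is pinned down with the correct constant, the integration and Chebyshev steps are routine, and the lemma follows; this lemma will then feed into the proof of Theorem~\ref{lem:geometry of annuli} by showing that the exceptional set of long vertical geodesics is small, after which a separate argument (using the disk of radius $r$ in the bounded complementary component) upgrades "most geodesics short" to "all geodesics short."
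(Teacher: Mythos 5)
Your overall skeleton (compute $\int_0^{2\pi}\ell(\theta)\,d\theta$ by Fubini, bound it by controlling the circular means of $|\phi'|$, then apply Chebyshev with threshold $\tfrac{2S}{\rho}(1-\rho)$) is exactly the paper's strategy, but the proposal has a genuine gap at the one step you yourself flag as the main obstacle: you never prove the bound on the interior circular means, and the tool you suggest for it is not the right one. The paper's argument is that $I(r):=\int_0^{2\pi}|\phi'(re^{i\theta})|\,d\theta$ is a convex function of $\log r$, because $|\phi'|$ is subharmonic and (crucially, using the analyticity of the two boundary Jordan curves) extends subharmonically to a neighbourhood of $\overline{\A_\rho}$; hence $I(r)\le\max\{I(\rho),I(1)\}$, and since $\rho I(\rho)=\leucl(\partial A_{\rm inn})\le S$ and $I(1)=\leucl(\partial A_{\rm out})\le S$, one gets $I(r)\le S/\rho$ and so $\int_0^{2\pi}\ell(\theta)\,d\theta=\int_\rho^1 I(r)\,dr\le \tfrac{S}{\rho}(1-\rho)$. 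Your alternative phrasing, ``the interior level curve $\phi(\{|z|=r\})$ has length at most $S$,'' is in fact true and provable by the same convexity argument applied to the subharmonic function $|z\phi'(z)|$, but the Fej\'er--Riesz inequality, which you propose as the natural tool, does not do this job: it compares an integral over a diameter of the disk with a boundary integral, and in this paper it is used only later, in the proof of Theorem~\ref{lem:geometry of annuli}, to bound the single geodesic $\log\gamma$ in the simply connected lift $\log A$ --- not to compare concentric circles inside the annulus. So the key inequality is asserted, not proved, and the hinted route to it would fail as stated.

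A second, smaller issue is the Chebyshev bookkeeping. Your displayed attempt inserts a spurious factor $2\pi$ (leading to the claim ``$\le\pi$, still not $\tfrac12$''), whereas Markov's inequality applied directly gives
\[
\operatorname{Leb}\Bigl\{\theta:\ell(\theta)\ge \tfrac{2S}{\rho}(1-\rho)\Bigr\}\;\le\;\frac{\int_0^{2\pi}\ell(\theta)\,d\theta}{\tfrac{2S}{\rho}(1-\rho)}\;\le\;\frac{\tfrac{S}{\rho}(1-\rho)}{\tfrac{2S}{\rho}(1-\rho)}\;=\;\frac12,
\]
with no normalisation by $2\pi$ anywhere; your final sentence states this correct conclusion but contradicts your own intermediate computation. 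Note also that the quantity $I(r)$ is the length of $\phi(\{|z|=r\})$ divided by $r$ (the point you left hanging with ``divided by\dots''), which is precisely why the inner boundary contributes $S/\rho$ rather than $S$ and why the final bound carries the factor $1/\rho$. Once you replace the Fej\'er--Riesz suggestion by the convexity-in-$\log r$ (Hadamard/Hardy) argument for the circular means and clean up the Chebyshev constants, your proof coincides with the paper's.
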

\begin{proof}
Consider the integral
\[
I=\int_{\rho}^1\int_{0}^{2\pi}|\phi'(re^{i\theta})|\,drd\theta=\int_{\rho}^1 dr \int_{0}^{2\pi}|\phi'(re^{i\theta})|\,d\theta.
\]
The function
$$
I(r):=\int_{0}^{2\pi}|\phi'(re^{i\theta})|\,d\theta,\quad \rho<r<1,
$$
is a convex function of $\log r$ since $|\phi'|$ can be extended to be subharmonic in a neighbourhood of $\overline{\A_{\rho}}$, so
$I(r)\leq\max\{I(\rho), I(1)\}$, for $\rho \le r\le 1$. Then
\begin{align*}
\rho I(\rho)&= \int_{0}^{2\pi}\rho |\phi'(\rho e^{i\theta})|\,d\theta=\leucl(\partial A_{{\rm in}})\leq S,  \\
        I(1)&= \int_{0}^{2\pi}|\phi'(e^{i\theta})|\,d\theta=\leucl(\partial A_{{\rm out}})\leq S.
\end{align*}
Hence $I(r)\leq S/\rho$, for $\rho \le r\le 1$, so
  $$
  I=\int_\rho^1 I(r)\,dr\leq \frac{S}{\rho}(1-\rho).
  $$
Changing the order of integration we obtain
$$
I=\int_{0}^{2\pi} \left(\int_{\rho}^1  |\phi'(re^{i\theta}| \,dr\right)\,d\theta=\int_{0}^{2\pi} \ell(\theta)\,d\theta\leq \frac{S}{\rho}(1-\rho).
$$
Hence the Lebesgue measure of the set $\{\theta: \ell(\theta)> \frac{2S}{\rho}(1-\rho)\}$ is at most  $1/2$.
\end{proof}

In particular, Lemma~\ref{lem:most geodesics have bounded length} shows that the annulus~$A$ has many vertical geodesics whose Euclidean length is at most $2S(1-\rho)/\rho$.

Next, we state the following classical result (see\cite{FR21} and \cite[Theorem 3.13]{Duren}) about the space $H^p$, $p>0$, of functions $g$ holomorphic in $\D$ such that
\[
\sup_{0\le r< 1}\left\{\int_0^{2\pi}|g(re^{i\theta})|^p\,d\theta\right\} <\infty.
\]
\begin{lem}[Fej\'er-Riesz Inequality]\label{thm:FR}
If $g\in H^p$, then
$$  \int_{-1}^1|g(x)|^p\, dx \leq\frac{1}{2}\int_0^{2\pi}|g(e^{i\theta})|^p\,d\theta.$$
\end{lem}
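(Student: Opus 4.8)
The plan is to give the classical proof of the Fej\'er--Riesz inequality, which consists of a sequence of standard reductions followed by a short contour-integration argument.

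First I would reduce to the case where $g$ extends analytically past $\overline{\mathbb{D}}$. For $\rho\in(0,1)$ set $g_\rho(z)=g(\rho z)$, which is analytic on $\{|z|<1/\rho\}$; by Hardy's convexity theorem the means $\int_0^{2\pi}|g(\rho e^{i\theta})|^p\,d\theta$ are non-decreasing in $\rho$ with limit $\int_0^{2\pi}|g(e^{i\theta})|^p\,d\theta$ (the integral of the boundary function), and $g_\rho(x)\to g(x)$ for $x\in(-1,1)$. Choosing $\rho_k\uparrow 1$ along which $g$ has no zero on $\{|z|=\rho_k\}$ (possible, as $g$ has only countably many zeros), it suffices to prove the inequality for each $g_{\rho_k}$: Fatou's lemma on the left and the monotonicity above on the right then give it for $g$. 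So assume henceforth $g$ is analytic on $\overline{\mathbb{D}}$ with no zero on $\partial\mathbb{D}$; then $g$ has only finitely many zeros $z_1,\dots,z_N$ in $\overline{\mathbb{D}}$, all lying in $\mathbb{D}$. Put $B(z)=\prod_{j=1}^N\frac{z-z_j}{1-\overline{z_j}z}$ and $h=g/B$, so $h$ is analytic and zero-free on $\{|z|<\min_j 1/|z_j|\}\supset\overline{\mathbb{D}}$. Since $|B|\le 1$ on $[-1,1]$ and $|B|=1$ on $\partial\mathbb{D}$, we have $\int_{-1}^1|g|^p\le\int_{-1}^1|h|^p$ and $\int_0^{2\pi}|g(e^{i\theta})|^p\,d\theta=\int_0^{2\pi}|h(e^{i\theta})|^p\,d\theta$, so it is enough to prove the inequality for $h$. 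As $h$ is zero-free on a simply connected neighbourhood of $\overline{\mathbb{D}}$, the function $F:=\exp\bigl(\tfrac p2\log h\bigr)$ is well defined and analytic there with $|F|=|h|^{p/2}$; hence it suffices to prove
\[\int_{-1}^1|F(x)|^2\,dx\le\tfrac12\int_0^{2\pi}|F(e^{i\theta})|^2\,d\theta \qquad\text{for $F$ analytic on $\overline{\mathbb{D}}$.}\]

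The heart of the argument is the following. Writing $F(z)=\sum_{n\ge 0}a_nz^n$, introduce the analytic function $F^*(z):=\sum_{n\ge 0}\overline{a_n}z^n$ and set $G:=F\cdot F^*$, which is analytic on $\overline{\mathbb{D}}$. For real $x$ one has $F^*(x)=\overline{F(x)}$, so $G(x)=|F(x)|^2\ge 0$. Applying Cauchy's theorem to $G$ over the boundary of the closed upper half-disk (the segment $[-1,1]$ followed by the upper unit semicircle run counterclockwise from $1$ to $-1$) gives
\[\int_{-1}^1 G(x)\,dx=-\,i\int_0^{\pi}G(e^{i\theta})e^{i\theta}\,d\theta.\]
As the left side is real and non-negative, taking absolute values yields $\int_{-1}^1|F(x)|^2\,dx\le\int_0^{\pi}|G(e^{i\theta})|\,d\theta$. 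Since $F^*(e^{i\theta})=\overline{F(e^{-i\theta})}$, we get $|G(e^{i\theta})|=|F(e^{i\theta})|\,|F(e^{-i\theta})|\le\tfrac12\bigl(|F(e^{i\theta})|^2+|F(e^{-i\theta})|^2\bigr)$ by the arithmetic--geometric mean inequality. Integrating over $[0,\pi]$ and using the substitution $\theta\mapsto-\theta$ with $2\pi$-periodicity to write $\int_0^{\pi}|F(e^{-i\theta})|^2\,d\theta=\int_{\pi}^{2\pi}|F(e^{i\theta})|^2\,d\theta$, the two pieces combine to $\tfrac12\int_0^{2\pi}|F(e^{i\theta})|^2\,d\theta$. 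This proves the reduced inequality, and undoing the reductions in the previous paragraph gives the Fej\'er--Riesz inequality.

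I expect the delicate points to be the normalisation and limiting steps rather than the contour computation: one needs a little $H^p$ theory (existence of boundary values and monotonicity of the integral means in the radius) to pass from $g_{\rho_k}$ back to $g$, and one must check that after dividing out the finite Blaschke product $B$ the factor $h$ is genuinely zero-free on a \emph{simply connected} domain containing $\overline{\mathbb{D}}$, so that the analytic branch $h^{p/2}$ exists. The contour identity itself is short and is the one genuinely useful idea; everything else is bookkeeping.
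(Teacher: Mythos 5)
The paper does not prove this lemma at all: it is stated as a classical result, with references to the original Fej\'er--Riesz paper and to Duren's book (Theorem 3.13), and is then used as a black box in the proof of Theorem~\ref{lem:geometry of annuli}. Your argument is essentially the standard classical proof of that cited result, and it is correct: the reduction via the dilations $g_\rho$ (Fatou on the segment, monotonicity of the integral means on the circle), the removal of the zeros in the closed disk by a finite Blaschke product, the passage from exponent $p$ to exponent $2$ through an analytic branch of $h^{p/2}$, and the contour identity for $G=F\cdot F^*$ over the boundary of the upper half-disk followed by the arithmetic--geometric mean step are all sound, and the contour computation is indeed the one genuinely substantive idea. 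The only imprecision is your assertion that $h=g/B$ is zero-free on all of $\{|z|<\min_j 1/|z_j|\}$: you only divided out the zeros of $g$ lying in $\overline{\mathbb{D}}$, so $h$ may still vanish at zeros of $g$ in the annulus $1<|z|<\min_j 1/|z_j|$. What you actually need, and what is true, is that $h$ is nonvanishing on \emph{some} simply connected neighbourhood of $\overline{\mathbb{D}}$; since $h$ is continuous and zero-free on the compact set $\overline{\mathbb{D}}$, it is bounded away from zero there and hence zero-free on a slightly larger disk, so the branch $F=\exp\bigl(\tfrac p2\log h\bigr)$ is well defined and the proof goes through unchanged.
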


We can now give a proof of Theorem~\ref{lem:geometry of annuli}.

\begin{proof}[Proof of Theorem~\ref{lem:geometry of annuli}] Since the result remains true under a translation,  we can assume that $\C\setminus A$ contains a disk of radius $r$ centered at $0$.

We first claim that there exists $L=L(S,r)$ and a vertical geodesic $\sigma\in\FF_A$ such that $\leucl(\sigma)\leq L$.   Since $\C\setminus A$ contains a disk of radius $r$ centered at $0$, and the outer boundary has length at most $S$, the modulus of~$A$ is bounded from above by a constant depending only on $S$ and $r$.  The claim then follows by Lemma~\ref{lem:most geodesics have bounded length}, since~$\rho$ is bounded from below by a positive constant depending only on $S$ and $r$.

Now let $\log A$ be a lift of $A\setminus \sigma$ under the exponential map, using a suitable branch of the logarithm.  Observe that $\log A$ is simply connected and that vertical geodesics in~$A$ lift to geodesic cross cuts in $\log A$. For any vertical geodesic $\gamma \in \FF_A$ consider its lift $\log \gamma$ in $\log A$.
Let $\psi:\D\ra\log A$ be a biholomorphism  such that $\psi(\{z: -1 < \Re z < 1\})=\log\gamma$. This can be done by mapping~$0$ to a  point in $\log \gamma$, observing that geodesics are mapped to geodesics, and pre-composing with a rotation if necessary. By applying the Fej\'er-Riesz Inequality (Lemma~\ref{thm:FR}) with $p=1$ and $g=\psi'$, we obtain
\begin{equation}\label{eqtn:consequence of FR}
\leucl(\log\gamma)\leq\frac{1}{2}\leucl(\partial\log A).
\end{equation}

So it remains to show that $\leucl (\partial\log A)$ is bounded by a uniform constant and that the resulting bound on $\leucl(\log\gamma)$ can be translated into a bound for $\leucl(\gamma)$. We do this by studying the distortion of lengths of curves under the lift via the exponential.
Let $t\mapsto z(t)$ for $t\in [0,1]$ be a parametrization of a curve $C$ in $\ov{A}$ and let  $\log C$ be its lift in $\ov{\log A}$. Then $t\mapsto\log(z(t))$ for $t\in [0,1]$ is a parametrization of the  curve $\log C$, so
$$
\leucl(C)=\int_{0}^{1} |z'(t)|\,dt \; \text{ and  }\;  \leucl(\log C)=\int_{0}^{1} \left|\frac{z'(t)}{z(t)}\right|dt.
$$
Since ${A}$ is contained in the straight annulus $\A(r,S/2)$  (this follows from considering the extremal case for $\partial A_{out}$),  we have that $r\leq|z(t)|\leq S/2$, so
\begin{equation}\label{eqtn:distortion of lengths}
\frac{2}{S}\leucl(C)\leq\leucl(\log C)\leq\frac{1}{r}\leucl(C).
\end{equation}
It follows that $\leucl (\log\sigma)\leq \frac{1}{r}\leucl(\sigma)\leq \frac{L}{r}$ and that if $\alpha=\partial A_{{\rm inn}}$ and $\beta=\partial A_{{\rm out}}$ are the inner and outer boundary components, respectively, of~$A$, then we have
\[
\leucl (\log \alpha)+\leucl (\log \beta)\leq \frac{1}{r}(\leucl (\alpha)+\leucl (\beta))\leq \frac{2S}{r}.
\]
So
\[
\leucl(\partial\log A) = 2\leucl (\log\sigma) + \leucl (\log \alpha)+\leucl (\log \beta) \leq \frac{2L}{r} + \frac{2S}{r}.
\]
It now follows from (\ref{eqtn:consequence of FR}) and \eqref{eqtn:distortion of lengths} that, for any vertical geodesic $\gamma \in \FF_A$, we have

$$\leucl(\gamma)\leq \frac{S}{2} \leucl(\log\gamma)\leq\frac{S(L+S)}{2r}. $$
This concludes the proof of Theorem~\ref{lem:geometry of annuli}.
\end{proof}

\subsection{Proof of Theorem~\ref{thm:boundaries are Jordan curves}}
Let $(b_n)$ be a sequence of uniformly expanding Blaschke products of degree at most~$d$ and let~$f$ be the transcendental entire function with an associated orbit of wandering domains $(U_n)$ arising from Theorem~\ref{thm:main construction}. We will show that the boundary of $U_0$ is a Jordan curve.

For each $n \in \N$, we let $A_n$ be the annulus bounded by the circles $\gamma_n$ and $\Gamma_n$ which were defined in~\eqref{gammaGamma} in the proof of Theorem~\ref{thm:main construction}.
%
By the uniform expansivity condition on the functions~$b_n$ and the fact  that $\max\{|f(z)-f_n(z)|: z\in A_n\}\ra 0$ as $n\ra\infty$ (see \eqref{approx1}), we deduce using Cauchy's estimate that there exists $\eta > 1$ such that, for sufficiently large $n \in \N$,
\begin{equation}\label{eqtn:expansivity f}
|f'|\geq\eta>1 \text{ on  a neighborhood of $A_n$};
\end{equation}
in particular,~$f$ has no critical points in a neighborhood of $A_n$ for such~$n$. Relabeling the $U_n$ if necessary, we can assume that the above conditions hold for any $n\geq0$.\\

{\bf Step 1}\;
For each $n \geq 0$, we let $\widehat A_n$ denote the pre-image component of $A_{n+1}$ under~$f$ in $A_n$, given by Lemma~\ref{lem:preimages of Jordan}, with inner and outer boundary components $\widehat \gamma_n$ and $\widehat \Gamma_n$, say, respectively. Then let $\breve A_n$ denote the annulus lying between $\widehat A_n$ and $\Gamma_n$ (see Figure \ref{fig:Jordan curves}). Our first claim is that there exists $M=M(\eta,d)>0$  such that, for $n\ge 0$, each geodesic in the vertical foliation $\FF_n$ of the annulus $\breve A_n$ has Euclidean length at most $M$.

We start by showing that each Jordan curve $\widehat \Gamma_n$ has length which is uniformly bounded by $3\pi d/\eta$. Indeed, we can parametrize $\widehat \Gamma_n:[t_0,t_1]\cup [t_1,t_2]\cup\cdots \cup [t_{d-1},t_d]\ra \C$, where $t_0<t_1<\cdots<t_d$, with $\widehat \Gamma_n(t_0)=\widehat \Gamma_n(t_d)$, in such a way that~$f$ is univalent on $\widehat \Gamma_n(t_i, t_{i+1})$, $0\le i<d-1$. This can be done because the degree of $b_n$ (and hence of~$f$ on $\breve A_n$, for $n$ sufficiently large) is bounded above by~$d$, and $\widehat \Gamma_n$ is a Jordan curve. Notice that $f(\widehat \Gamma_n[t_i,t_{i+1}])\subseteq \Gamma_{n+1}$. For~$n$ large, we have, by \eqref{eqtn:expansivity f},
\begin{align*}
3\pi\geq \leucl (\Gamma_{n+1})&\geq \frac{1}{d}\left(\int_{t_0}^{t_1} |f'(\widehat \Gamma_n(t))||\widehat \Gamma_n'(t)|\,dt+\cdots+ \int_{t_{d-1}}^{t_d}|f'(\widehat \Gamma_n(t))||\widehat \Gamma_n'(t)|\,dt\right) \\ &\geq \frac{\eta}{d} \int_{t_0}^{t_d}|\widehat \Gamma_n'(t)|\,dt=\frac{\eta}{d} \leucl (\widehat \Gamma_n).
\end{align*}
(The second inequality becomes an equality if $f:\widehat \Gamma_n[t_i,t_{i+1}]\ra \Gamma_{n+1}$ is surjective for every~$i$.)
Since, by construction, the bounded component of $\C\setminus \widehat A_n$ contains the circle $\gamma_n=\{z:|z|=r_n\}$ and $r_n \ge 1/2$ for $n\ge 0$, the annulus $\widehat A_n$ satisfies the hypotheses of Theorem~\ref{lem:geometry of annuli}, giving the claim.

\begin{figure}[hbt!]
\centering
\includegraphics[width=0.9\textwidth]{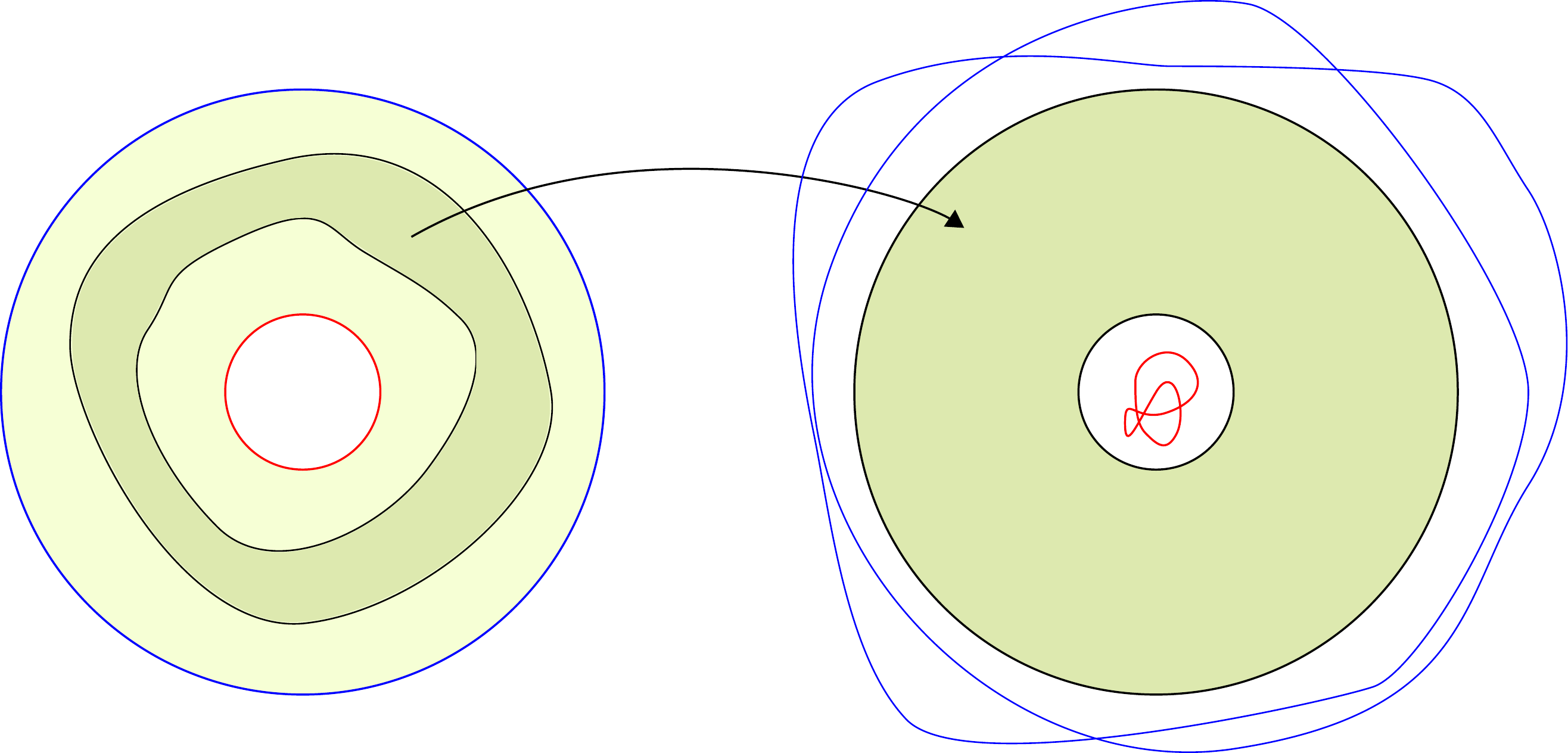}
\setlength{\unitlength}{0.9\textwidth}
\put(-0.79,0.22){\small $\gamma_n$}
\put(-0.6,0.22){\small $\Gamma_n$}
\put(-0.92,0.11){\small $\widehat{\Gamma}_n$}
\put(-0.87,0.16){\small $\widehat{\gamma}_n$}
\put(-0.72,0.16){$\widehat{A}_n$}
\put(-0.82,0.13){$A_n$}
\put(-0.3,0.13){\large ${A}_{n+1}$}
\put(-0.21,0.22){\small $\gamma_{n+1}$}
\put(-0.07,0.22){\small $\Gamma_{n+1}$}
\put(-0.75,0.07){$\breve{A}_n$}
\put(-0.16,0.46){\small $f(\Gamma_{n})$}
\put(-0.34,0.22){\small $f(\gamma_{n})$}
\put(-0.56,0.39){\small $f$}
\caption{\label{fig:Jordan curves} Sketch of the setup in Step 1. }
\end{figure}

{\bf Step 2}\; Now let $\tilde{\Gamma}_n$, $\tilde{\gamma}_n$ be those pre-images under $f^n$ of $\Gamma_n,\gamma_n$, respectively, that are contained in  $A_0$ and are such that $\tilde{\Gamma}_n$ surrounds $\Gt_{n+1}$ for every $n$, while  $\tilde{\gamma}_n$  is surrounded by $\tilde{\gamma}_{n+1}$ for every~$n$. The existence of $\tilde{\gamma}_n$ and $\tilde{\Gamma}_n$, and the fact that they are Jordan curves, follows by applying inductively Lemma~\ref{lem:preimages of Jordan}, since there are no critical points in $A_n$ for any $n$ by the uniform expansivity condition.

We now concentrate on the family of Jordan curves $\Gt_n$ and construct a continuous map $\Sigma$ from $\Gamma_0$ to the limit of the $\Gt_n$, defined in an appropriate way.

Fix $z_0\in\Gamma_0$. Let   $\Sigma_0(z_0)$ be the (unique) geodesic in $\FF_0$ which connects $z_0$ to some point  $z_1\in\Gt_1$, and let us parametrize it as a curve $\Sigma_0(z_0,t)$, $t\in[0,1]$, with $\Sigma_0(z_0,0)=z_0$,  $\Sigma_0(z_0,1)=z_1$. Consider $f(z_1)\in\Gamma_1$, and the geodesic $\omega_1\in\FF_1$ which connects $f(z_1)$  to some point $z_2'$ say in $f^{-1}(\Gamma_2)$. Notice that the definition  of $\FF_1$ automatically specifies the connected component of $f^{-1}(\Gamma_2)$ to which $z'_2$ belongs. The preimage of $\omega_1$ under $f$ which contains $z_1$ is an arc that can be parametrized as $\Sigma_1 (z_0,t)$, $t\in[1,2]$, connecting $z_1$ to some point $z_2\in f^{-1}(z'_2)\cap \Gt_2$.
Proceeding in this way, for each~$n$ we can construct a point $z_n$ and a curve $\Sigma_n (z_0,t)$, $t\in[n,n+1]$, connecting $z_n$ to $z_{n+1}$. This can be repeated for any starting point $z\in\Gamma_0$ to construct a continuous injective curve
$$
\Sigma(z,t):\Gamma_0\times [0,\infty]\ra A_0,
$$
such that $\Sigma(z,n)\in\Gt_n$ and, for each $z\in \Gamma_0$ and each $j\leq n$, $n \in \N$, we have $f^j(\Sigma(z,[n,n+1]))\subset A_j$ and $f^n(\Sigma(z,[n,n+1]))$ is a geodesic in $\FF_n$.

Recalling that the Euclidean length of elements in $\FF_n$ is bounded uniformly in $n$ by a constant $M=M(\eta,d)>0$, and using the expansivity estimate (\ref{eqtn:expansivity f}) on $f$, we obtain
$$
\leucl( \Sigma(z,[n,n+1]))\leq \frac{1}{\eta^n}M,\;\text{ for } z\in \Gamma_0, n\in \N.
$$
It follows that for each~$z\in \Gamma_0$ the curve $\Sigma(z,t)$ converges to $\Sigma(z)$, say, as $t\ra\infty$, and moreover that the map $\Sigma: \Gamma_0\ra \Sigma(\Gamma_0)$ is continuous in $z$, so $\Sigma(\Gamma_0)$ is a closed  curve. Note, however, that we have not shown that $\Sigma$ is a Jordan curve, since the map $z\mapsto \Sigma(z)$ has not been shown to be injective.

We can construct an analogous map $\sigma: \gamma_0\ra \sigma(\gamma_0)$ and obtain a closed curve $\sigma(\gamma_0)$ as a uniform limit using the Jordan curves $\tilde \gamma_n$ in a similar manner.\\

{\bf Step 3}\; We now do the final step of showing that $\partial U_0$ is indeed a Jordan curve. It is sufficient (see \cite[Chapter VI, Theorem 16.1]{New61}) to show that each point of $\partial U_0$ is accessible from both complementary components. By the construction in Step~2, it is enough to show that
\[
\partial U_0 \subset \sigma(\gamma_0) \cap \Sigma(\Gamma_0).
\]


Let $C_n$, $c_n$ denote the bounded complementary components of the Jordan curves $\tilde{\Gamma_n}$, $\tilde{\gamma_n}$, respectively.
Then $(C_n)$ and $(c_n)$ each form a sequence of nested topological disks which are respectively  decreasing and  increasing, because for each~$n$ we have that $f(\Gamma_n)$ surrounds $\Gamma_{n+1}$ and $f(\gamma_n)$ is surrounded by $\gamma_{n+1}$. Notice that for each~$n$, the annulus $\tilde A_n:=C_n\setminus \ov{c_n}$ contains $\partial U_0$. This is because, for $n \ge 0$, we have $\Gamma_n$ surrounds $U_n$ and $\gamma_n\subset U_n$.

Now suppose that $\partial U_0 \not\subset \Sigma(\Gamma_0)$, and let $\zeta\in \partial U_0 \setminus \Sigma(\Gamma_0)$. Then for some $r>0$ the disk $D(\zeta,r)$ does not meet the closed curve $\Sigma(\Gamma_0)$, so there is some open disk $D(\zeta',r')\subset D(\zeta,r)$ that lies in both the exterior of $\overline{U_0}$ and in the bounded complementary component of $\Sigma(\Gamma_0)$ which contains $U_0$. Hence $D(\zeta',r')\subset \tilde A_n$, for all $n\in \N$, and by construction $f^n(D(\zeta',r'))\subset A_n$ for every~$n$. Now, the maximal radii of the Euclidean disks contained in the annuli $A_n$ are bounded, and indeed converge to~$0$ as $n\to\infty$. For large~$n$, this contradicts the fact that, since $|({f^n})'(\zeta')|\geq \eta^n$, the image $f^n (D(\zeta',r'))$ contains a disk of radius at least $Br' \eta^n$, where $B>0$ is Bloch's constant. Hence $\partial U_0 \subset \Sigma(\Gamma_0)$.

A similar argument shows that $\partial U_0\subset \sigma(\gamma_0)$, which completes the proof.

\bibliographystyle{amsalpha}
\bibliography{Wandering}

{\em Emails}: ambenini@gmail.com, 
 vasiliki.evdoridou@open.ac.uk,  fagella.nuria@gmail.com,\\ phil.rippon@open.ac.uk, gwyneth.stallard@open.ac.uk.

\end{document}